\theoremstyle{plain}
\newtheorem{thm}{Theorem}[section]
\newtheorem*{thm*}{Theorem}
\newtheorem{prop}[thm]{Proposition}
\newtheorem*{prop*}{Proposition}
\newtheorem*{conj*}{Conjecture}
\newtheorem{lem}[thm]{Lemma}
\newtheorem*{lem*}{Lemma}
\newtheorem{example}[thm]{Example}
\newtheorem{definition}[thm]{Definition}
\newtheorem{remark}[thm]{Remark}
\def\diag{ \begin{tikzpicture} \draw[dashed] (-.12,-.12) -- (.42, .42); \end{tikzpicture} }
\newcommand{\checkerboard}[1]
{
\foreach \i in {0,...,#1}
\foreach \j in {0,...,\i}
{
\draw[lightgray, fill={\ifodd\numexpr\i+\j\relax white\else lightgray\fi}] (\i,\j) rectangle (\i+1,\j+1);
\draw[lightgray, fill={\ifodd\numexpr\i+\j\relax lightgray\else white\fi}] (\i,-\j) rectangle (\i+1,-\j-1);
\draw[lightgray, fill={\ifodd\numexpr\i+\j\relax lightgray\else white\fi}] (2*#1-\i+2,\j) rectangle (2*#1-\i+1,\j+1);
\draw[lightgray, fill={\ifodd\numexpr\i+\j\relax white\else lightgray\fi}] (2*#1-\i+2,-\j) rectangle (2*#1-\i+1,-\j-1);
}
}
\newcommand{\ZZ}{\mathbb{Z}}
\title{Shuffling algorithm for coupled tilings of the Aztec diamond}
\author{David Keating}
\affil{Department of Mathematics, University of Wisconsin, Madison\\
{\small\ttfamily{dkeating3@wisc.edu}}}
\author{Matthew Nicoletti}
\affil{Department of Mathematics, Massachusetts Institute of Technology \\
{\small\ttfamily{mnicolet@mit.edu}}}
 \date{}
\begin{document}

\maketitle

\begin{abstract}
    In this article we define a generalization of the domino shuffling algorithm for tilings of the Aztec diamond to the interacting $k$-tilings recently introduced by S. Corteel, A. Gitlin, and the first author. We describe the algorithm both in terms of dynamics on a system of colored particles and as operations on the dominos themselves.
\end{abstract}

\section{Introduction}

Domino tilings of the Aztec diamond were first introduced by Elkies, Kuperberg, Larsen, and Propp \cite{aztec0} in their study of alternating-sign matrices. See Figure \ref{fig:ADex} for an example of a domino tiling of rank 3. In their work, the authors introduced \emph{domino shuffling}, an algorithm by which one can generate a tiling of the Aztec diamond of rank $(N+1)$ from a tiling of the Aztec diamond of rank $N$. Using this they were able to derive a recursive formula for the number of tilings. Solving this recursion they found the beautiful result:
\begin{thm}
The number of tilings of the Aztec diamond of rank $N$ is given by
\[
2^{\binom{N+1}{2}}.
\]
\end{thm}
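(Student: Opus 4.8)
The plan is to promote this statement to the recursion $a_{N}=2^{\,N}a_{N-1}$ for $N\ge 1$, where $a_N$ denotes the number of tilings of the Aztec diamond of rank $N$ and $a_0=1$ (the unique tiling of the empty region). Iterating gives
\[
a_N \;=\; \prod_{k=1}^{N}2^{\,k}\;=\;2^{\,1+2+\cdots+N}\;=\;2^{\binom{N+1}{2}},
\]
so everything reduces to the recursion, which I would prove using domino shuffling.

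Recall the shuffle. Starting from a tiling $T$ of rank $N-1$: (i) \emph{deletion}: remove every bad $2\times2$ block, i.e.\ every $2\times2$ square tiled by two dominoes which, under the standard $N/S/E/W$ coloring, are about to slide into each other; (ii) \emph{sliding}: translate every remaining domino one step in the direction prescribed by its type; (iii) \emph{creation}: the region left uncovered is a disjoint union of $2\times2$ squares, each of which gets filled by a pair of dominoes. A dimension count — a rank-$N$ tiling has $2N$ more dominoes than a rank-$(N-1)$ one — shows that if $d$ bad blocks were deleted then exactly $c=d+N$ squares must be created. The key lemma is that the shuffle, together with the data of the creation choices, sets up a bijection
\[
\{\text{rank-}N\text{ tilings}\}\ \longleftrightarrow\ \bigl\{(T,w)\ :\ T\text{ a rank-}(N-1)\text{ tiling},\ w\in\{0,1\}^{\,N}\bigr\},
\]
in which, among the $c$ created squares, exactly $N$ may be filled freely (the choices recorded by $w$) while the remaining $d$, sitting at the sites of the deleted bad blocks, are filled in a forced way. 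Granting this, $a_{N}=\sum_{T}2^{\,N}=2^{\,N}a_{N-1}$, and the theorem follows by induction with base case $a_0=1$ (or $a_1=2$, checked directly).

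The routine ingredients are the dimension count and solving the recursion; the content is the key lemma, and I expect two steps to be the main obstacles. First, one must check that the shuffle is well defined: bad blocks are pairwise disjoint, distinct slid dominoes never overlap, all images stay inside the rank-$N$ diamond, and the uncovered region is genuinely a union of $2\times2$ squares on the even sublattice — a finite local case analysis of the four domino types combined with a global area/parity bookkeeping. Second, and this is the technical heart, one must establish invertibility: from a rank-$N$ tiling one should canonically recover both the rank-$(N-1)$ tiling and the word $w$. This forces one to pin down exactly which created squares are ``free'' and which are ``forced'' and to verify that an appropriately defined reverse shuffle (deleting the now-``bad'' blocks for the reversed dynamics, sliding back, and recreating) undoes the forward one. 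Once that free/forced dichotomy is established the factor $2^{N}$ is immediate, but establishing it is precisely where the combinatorial care is needed.
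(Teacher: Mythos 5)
Your overall strategy (reduce to the recursion $a_N = 2^N a_{N-1}$ and establish it via domino shuffling) is exactly the route of Elkies--Kuperberg--Larsen--Propp that the paper is alluding to when it states this theorem (the paper does not reprove it; within the paper it also falls out of Theorem \ref{thm:ADPF} at $b_i=c_i=1$, or from the spider-move computation of Appendix \ref{sec:genShuf} specialized to one color). However, your key lemma is not correct as stated, and this is a genuine gap. In the shuffle, \emph{all} $c = d + N$ creation holes are genuinely free: each empty $2\times 2$ block admits both the horizontal and the vertical filling, and distinct fillings give distinct valid rank-$N$ tilings, so a single rank-$(N-1)$ tiling with $d\ge 1$ bad blocks has $2^{d+N} > 2^N$ successors. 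There is no canonical subset of $N$ ``free'' holes with the other $d$ ``forced''; moreover the creation holes do not sit ``at the sites of the deleted bad blocks'' (after deletion and sliding, the uncovered $2\times2$ blocks appear at different locations). So the map you describe cannot be a bijection onto rank-$N$ tilings with uniform fibers $\{0,1\}^N$, and no construction of the forced-filling rule or proof of bijectivity is offered.

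The correct bookkeeping is two-sided: from $T$ one reaches $2^{d(T)+N}$ tilings $T'$, while a given $T'$ arises from exactly $2^{e(T')-N}$ predecessors (fill the holes of the \emph{reverse} shuffle), where $e(T')$ is the number of reverse-bad blocks of $T'$; along any such pair one has $d(T) = e(T')-N$. Writing
\[
a_{N-1} \;=\; \sum_{T}\sum_{T'\colon T\to T'} 2^{-(d(T)+N)} \;=\; \sum_{T'} 2^{\,e(T')-N}\cdot 2^{-e(T')} \;=\; 2^{-N} a_N
\]
recovers the recursion. (Equivalently, EKLP track the generating function $\sum_T x^{v(T)}$, $v$ being half the number of vertical dominoes, and show it gets multiplied by $(1+x)^N$, the destruction and creation factors of $(1+x)$ cancelling.) Either way, the content you must prove is the fiber count $2^{e(T')-N}$ for the reverse shuffle and the compatibility $d = e - N$, not a free/forced dichotomy among the created squares; your well-definedness checklist (disjointness of bad blocks, the hole structure, invertibility up to the lost destruction data) is the right list of verifications to support \emph{that} lemma.
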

The shuffling algorithm has since proved very useful in the study of these tilings. An immediate benefit is that shuffling allows for efficient exact sampling with arbitrary weights~\cite{shuffling1}. Furthermore, it has also been used as a tool for asymptotic analysis, in the following way: One central result in the study of tilings of the Aztec diamond
is the arctic circle theorem \cite{aztec3}. It states that for large $N$, a uniformly random tiling exhibits a brickwork pattern in four regions (called frozen regions or polar regions), one adjacent to each corner of the Aztec diamond, whose union is approximately the region outside of the largest circle (called the arctic circle) that can be inscribed in the Aztec diamond. The strategy used in the original proof of this fact was a careful analysis of the shuffling algorithm \cite{aztec3}.

There are many ways to view domino shuffling. Originally it was described using sequences of moves that one must perform with the dominos on the tilings themselves, see \cite{shuffling2,shuffling1}. Alternatively, one can see it as an example of dynamics on a certain space of particle configurations, as first observed by Nordenstam \cite{nord}. With a restricted class of weights, these particle dynamics can be re-derived using the algebraic structure of the Schur process \cite{RYG,steep,Ferrari2008,borodin2015random}. Furthermore, when viewed as a deterministic discrete time dynamical system on the weights, domino shuffling is an example of a cluster integrable system \cite{GoncharovKenyon2011DimersClusterIntSys}. This is a consequence of the fact that the shuffling algorithm consists of a collection of structure preserving local moves called~\emph{spider moves}.

In this article, we describe a generalization of the shuffling algorithm. Recently, in \cite{LLTaztec} the authors described a model of $k$ interacting tilings of the Aztec diamond. The authors computed the partition function of the model by relating it the LLT polynomials of Lascoux, Leclerc, and Thibon \cite{LLT1997}. We generalize the shuffling algorithm to these interacting $k$-tilings.

More precisely, a \emph{$k$-tiling} of the Aztec diamond is a collection of $k$ domino tilings of the Aztec diamond. We consider the tilings to be indexed by colors, which are ordered. Thinking of the different tilings as being overlaid one on top of the other, we define an \emph{interaction} between two of the tilings as an instance of a local configuration of the form
\[
\resizebox{2cm}{!}{
\begin{tikzpicture}[baseline = (current bounding box).center]
\draw[lightgray] (0,0) rectangle (1,1);
\draw[lightgray,fill=lightgray] (1,0) rectangle (2,1);

\draw[very thick, blue] (0,0) rectangle (2,1);
\draw[very thick, red] (0.05,0.05) rectangle (2.05,1.05);
\end{tikzpicture}
}
\text{, }
\resizebox{3cm}{!}{
\begin{tikzpicture}[baseline = (current bounding box).center]
\draw[lightgray] (0,0) rectangle (1,1);
\draw[lightgray,fill=lightgray] (1,0) rectangle (2,1);
\draw[lightgray,fill=lightgray] (-1,0) rectangle (0,1);
\draw[very thick, blue] (0,0) rectangle (2,1);
\draw[very thick, red] (-1.05,0.05) rectangle (1.05,1.05);
\end{tikzpicture}
}
\text{, }
\resizebox{2cm}{!}{
\begin{tikzpicture}[baseline = (current bounding box).center]
\draw[lightgray] (0,0) rectangle (1,1);
\draw[lightgray,fill=lightgray] (1,0) rectangle (2,1);
\draw[lightgray,fill=lightgray] (0,1) rectangle (1,2);
\draw[very thick, blue] (0,0) rectangle (2,1);
\draw[very thick, red] (0.05,0.05) rectangle (1.05,2.05);
\end{tikzpicture} 
}
\text{, or}
\resizebox{2cm}{!}{
\begin{tikzpicture}[baseline = (current bounding box).center]
\draw[lightgray] (0,0) rectangle (1,1);
\draw[lightgray,fill=lightgray] (0,0) rectangle (1,1);
\draw[lightgray,fill=lightgray] (1,1) rectangle (2,2);
\draw[very thick, red] (0,0) rectangle (2,1);
\draw[very thick, blue] (1.05,0.05) rectangle (2.05,2.05);
\end{tikzpicture}
}
\]
where above blue is the smaller color in our ordering and red the larger. We assign a \emph{weight} to the $k$-tilings given by $t^{\text{\# interactions}} $.

Even restricting to just~$k=2$, this distribution on~$2$-tilings is an integrable one parameter deformation of the double dimer model, which is obtained by setting the interaction strength~$t = 1$. In the large~$N$ limit, the model appears to exhibit many of the same phenomena as the dimer model, including arctic curves and limit shapes. On the other hand, the properties of these limit shapes and arctic curves appear to be very different from those observed in the dimer model. See Appendix~\ref{sec:sim} for a brief discussion and several simulations.

In addition to questions about limit shapes and arctic curves, there are many other natural questions one can ask about the coupling between the~$2$ (or~$k$) interacting tilings in the scaling limit. For example, near the arctic curve we expect to observe a one parameter deformation of~$k$ independent Airy processes, in which each color's edge fluctuations are coupled in a nontrivial way. It would also be interesting to study the global height fluctuations, and in particular how the fluctuations of different colors are coupled together. We expect that the efficient sampling algorithm provided by our main theorem below could be an important tool for the investigation of these questions.

The following is a special case of the main result.
\begin{thm}\label{thm:basic}
The following algorithm generates a random $k$-tiling of the rank-$N$ Aztec diamond with probability proportional to its weight.

\noindent Algorithm: Start with a rank-0 Aztec diamond. To get from a rank-$(T-1)$ to rank-$T$ $k$-tiling,
\begin{enumerate}
    \item Slide and destroy as in the normal domino shuffle, independently for each color.
    \item Fill in empty $2\times 2$ squares according to the rule:
    \begin{enumerate}
        \item For the smallest color put two horizontal dominoes with probability $$\frac{t^{\#_1(1)}}{1+t^{\#_1(1)}}$$ where 
        \[
        \#_1(l) = \# \text{ colors $m>l$ that locally have } 
\resizebox{0.08\textwidth}{!}{
\begin{tikzpicture}[baseline = (current bounding box).center]
\draw[] (0,0) rectangle (1,1); \draw[] (1,1) rectangle (2,2);
\draw[fill=lightgray] (0,1) rectangle (1,2); \draw[fill=lightgray] (1,0) rectangle (2,1);  
\draw[very thick, red] (0.05,0.05) rectangle (1.05,2.05);
\end{tikzpicture}}
\text{ or } 
\resizebox{0.08\textwidth}{!}{
\begin{tikzpicture}[baseline = (current bounding box).center]
\draw[] (0,0) rectangle (1,1); \draw[] (1,1) rectangle (2,2);
\draw[fill=lightgray] (0,1) rectangle (1,2); \draw[fill=lightgray] (1,0) rectangle (2,1);  
\draw[very thick, red] (0.05,0.05) rectangle (2.05,1.05);
\end{tikzpicture}}
\text{ or creation.}
        \]
        Do all of these first.
        \item Now do all the larger colors from smallest to largest. For color $l>1$, put two horizontal dominoes with probability 
        $$\frac{t^{\#_1(l)+\#_2(l)}}{1+t^{\#_1(l)+\#_2(l)}}$$ where 
        \[
        \#_2(l) = \# \text{ colors $m<l$ that locally have } 
\resizebox{0.12\textwidth}{!}{
\begin{tikzpicture}[baseline = (current bounding box).center]
\draw[] (0,0) rectangle (1,1); \draw[] (1,1) rectangle (2,2);
\draw[fill=lightgray] (0,1) rectangle (1,2); \draw[fill=lightgray] (1,0) rectangle (2,1);  
\draw[very thick, blue] (1,1) rectangle (3,2);
\end{tikzpicture}}
\text{ or } 
\resizebox{0.08\textwidth}{!}{
\begin{tikzpicture}[baseline = (current bounding box).center]
\draw[] (0,0) rectangle (1,1); \draw[] (1,1) rectangle (2,2);
\draw[fill=lightgray] (0,1) rectangle (1,2); \draw[fill=lightgray] (1,0) rectangle (2,1);  
\draw[very thick, blue] (1,1) rectangle (2,3);
\end{tikzpicture}}
        \]
    and $\#_1(l)$ is as in part (a).
    \end{enumerate}

    This gives a $k$-tiling of the Aztec diamond whose rank has increased by one. 
\end{enumerate}
Repeat steps (2) and (3) until you get a rank-$N$ Aztec diamond.
\end{thm}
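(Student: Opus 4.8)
The plan is to argue by induction on the rank $T$, maintaining the invariant that after the rank-$T$ step the $k$-tiling is distributed as $\mathbb{P}_T(\tau)=t^{\#\mathrm{int}(\tau)}/Z_T$, where $Z_T=\sum_{\tau}t^{\#\mathrm{int}(\tau)}$ is summed over rank-$T$ $k$-tilings; the base case $T=0$ is immediate. For the inductive step one must show that applying the update rule to a $\mathbb{P}_{T-1}$-distributed rank-$(T-1)$ $k$-tiling $\sigma$ yields a $\mathbb{P}_T$-distributed rank-$T$ $k$-tiling. The argument has three pieces: the combinatorics of the ordinary colorwise shuffle, a \emph{locality} lemma for the interaction statistic, and a \emph{fiber} lemma evaluating the interaction-weighted number of rank-$T$ $k$-tilings above a fixed rank-$(T-1)$ one.

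First I would recall the structure of the ordinary single-color shuffle (in the particle picture of \cite{nord}, or from \cite{shuffling1,shuffling2}): ``slide and destroy'' is deterministic, and from a rank-$(T-1)$ tiling it produces a partial tiling of the rank-$T$ diamond whose empty region is a disjoint union of $2\times2$ squares lying in a fixed parity grid $P_T$; filling each empty square with two horizontal or two vertical dominoes gives, bijectively, precisely the rank-$T$ tilings above the given one. Carried out in each color, a rank-$T$ $k$-tiling $\tau$ is thereby encoded uniquely as a pair $(\sigma,(c_i)_{i=1}^{k})$, where $\sigma$ is a rank-$(T-1)$ $k$-tiling and $c_i$ records, for each empty square of color $i$, whether it was filled horizontally or vertically.

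The combinatorial core is the locality lemma
\[
\#\mathrm{int}(\tau)\;=\;\#\mathrm{int}(\sigma)\;+\;\sum_{p\in P_T}I_p\!\big((c_i(p))_i\big),
\]
in which $I_p$ counts the interactions of $\tau$ that involve at least one domino created in the block $p$, and depends only on the choices at $p$ and on the (fixed) slid configuration near $p$. Its proof requires checking, against the four interaction types, that (a) simultaneous sliding and destruction in all colors maps the interactions among the non-created dominoes bijectively onto the interactions of $\sigma$, and (b) every remaining interaction of $\tau$ involves a created domino and, because distinct squares of $P_T$ are disjoint and an interaction occupies a region of size at most $2\times3$, is confined to a single block $p$. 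I expect this case analysis to be the main obstacle. One also needs the \emph{fiber} lemma: $\sum_{\tau\ \mathrm{above}\ \sigma}t^{\#\mathrm{int}(\tau)}=c_T\,t^{\#\mathrm{int}(\sigma)}$ for some constant $c_T$ depending only on $T$ — equivalently $\prod_{p\in P_T}\sum_{\epsilon}t^{I_p(\epsilon)}=c_T$, the inner sum running over the resolutions $\epsilon$ of those colors empty at $p$. This is the interaction-weighted refinement of the classical statement that each rank-$(T-1)$ tiling has exactly $2^{T}$ rank-$T$ tilings above it; it yields the recursion $Z_T=c_T Z_{T-1}$, and I would prove it by analyzing the inverse shuffle while tracking how interactions distribute over the creation blocks.

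It remains to pin down the conditional law of the creation data. Given $\sigma$, the choices in distinct blocks of $P_T$ are made independently, so it is enough to check that at one block $p$ the sequential rule of the theorem samples $(c_i(p))_i$ with probability $t^{I_p}/\sum_{\epsilon}t^{I_p(\epsilon)}$. This is a chain-rule identity: by construction $\#_1(l)$ counts the larger colors whose domino at $p$ — slid into one of the listed positions, or created, the latter regardless of that color's own choice — interacts with a horizontally created color-$l$ domino at $p$, and $\#_2(l)$ is the corresponding count over the smaller colors, while a vertically created color-$l$ domino at $p$ produces none of these interactions; hence $I_p=\sum_{l:\,c_l(p)=H}(\#_1(l)+\#_2(l))$, the product over $l$ (smallest to largest) of the stated conditional probabilities equals $t^{I_p}$ over $\prod_l(1+t^{\#_1(l)+\#_2(l)})$, and this last product is $\sum_{\epsilon}t^{I_p(\epsilon)}$ by the same bookkeeping. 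Combining the pieces,
\[
\mathbb{P}[\text{output}=\tau]=\mathbb{P}_{T-1}(\sigma)\prod_{p\in P_T}\frac{t^{I_p}}{\sum_{\epsilon}t^{I_p(\epsilon)}}=\frac{t^{\#\mathrm{int}(\sigma)}}{Z_{T-1}}\cdot\frac{t^{\#\mathrm{int}(\tau)-\#\mathrm{int}(\sigma)}}{c_T}=\frac{t^{\#\mathrm{int}(\tau)}}{Z_T},
\]
which closes the induction. Equivalently, one may package the whole step as the statement that the colorwise shuffle is a composition of colored ``spider moves,'' one at each block of $P_T$, each weight-preserving up to its local factor $\sum_{\epsilon}t^{I_p(\epsilon)}$ and inducing the stated conditional law, with the fiber lemma expressing the global consistency of these local weight changes.
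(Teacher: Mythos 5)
Your overall strategy (induction on the rank, a block-local decomposition of the interaction statistic, a constant fiber sum, and a chain rule for the sequential creation probabilities) is closest in spirit to the paper's alternative proof in Appendix~\ref{sec:genShuf} via generalized spider moves, and quite far from the paper's primary proof, which runs through the Cauchy and branching identities for LLT polynomials and a Markov chain on colored interlacing particle arrays. Unfortunately two of your structural assumptions fail, and they fail exactly where the real difficulty of the theorem lives. First, there is no common parity grid $P_T$ containing the creation blocks of all colors: each color slides and destroys independently, so the empty $2\times 2$ squares of different colors generally sit at different, partially overlapping positions. Already for $k=2$ and $T=2$, starting from the rank-$1$ $2$-tiling in which color $1$ is horizontal and color $2$ is vertical, color $1$'s two creation blocks tile the middle horizontal band of the rank-$2$ diamond while color $2$'s tile the middle vertical band, and each block of one color meets each block of the other in a single cell. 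Consequently an interaction between two created dominos of different colors is not confined to a single block, and the independence over blocks on which your chain-rule step rests is not available in the form you state it.

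Second, and more seriously, part (a) of your locality lemma is false: slide-and-destroy does not carry the interactions among surviving dominos bijectively onto the interactions of $\sigma$. In the third interaction type the horizontal domino of the smaller color has its left cell white (an S-domino) and the vertical domino of the larger color has that same white cell as its bottom cell (a W-domino); they slide in perpendicular directions and separate, destroying the interaction. In the example above, $\sigma$ has one interaction while the slid configuration has none among the slid dominos, and the missing power of $t$ is recovered only because the overlapping creation blocks force at least one interaction among the created dominos. So $\#\mathrm{int}(\tau)-\#\mathrm{int}(\sigma)$ is not a sum of block-local quantities, and your fiber identity $\prod_p\sum_\epsilon t^{I_p(\epsilon)}=c_T$ cannot be established block by block. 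The paper's Appendix~\ref{sec:genShuf} confronts exactly this non-locality: the local spider-move identities of Lemma~\ref{lem:2spider} produce correction factors $\Gamma^{\pm1}$ not only at creation cells but also at destruction cells and at certain slide cells, and these cancel only globally, via the counting statement of Lemma~\ref{lem:gamcount} that along each diagonal the creation-type and destruction-type cells differ in number by exactly one. Any direct combinatorial proof along your lines must supply an analogue of that global cancellation; as the paper points out, the local relations alone are not even sufficient to make the algorithm work for general edge weights, which is a further sign that a purely block-local argument cannot close the induction.
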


The main tools we use are the Cauchy and branching identities for the LLT polynomials, as these allow us to apply a general construction of Borodin and Ferrari~\cite{Ferrari2008}. In fact, a standard bijection allows one to view a $k$-tiling as a tuple of interlaced particle arrays. The aforementioned construction (after some calculation) prescribes explicit transition probabilities for these particles, such that if the initial particle positions correspond to a random rank-$N$ tiling, then the update generates a random $k$-tiling of rank-($N+1$). Using the bijection to interpret the dynamics as local moves on dominos, we obtain the shuffling algorithm in Theorem~\ref{thm:main} in the text, which gives Theorem~\ref{thm:basic} by setting the weights to be uniform. This Markov chain on colored particle arrays generalizes the Markov chain on a single particle array described in~\cite[Section 2]{borodin2015random}, which corresponds to the usual shuffling algorithm. 

In addition to the proof of Theorem~\ref{thm:basic} using LLT polynomials, in Appendix~\ref{sec:genShuf} we give an alternative proof which employs a local resampling procedure which generalizes the resampling coming from the spider move. The resampling relies on a set of relationships between local partition functions, which are listed in Lemma~\ref{lem:2spider}. Contrary to the one color case, these relations are not sufficient to produce a shuffling algorithm for~$k$-tilings with arbitrary weights. However, they can still be used to construct a shuffling algorithm for certain choices of weights, including uniform weights, which is the setting of Theorem~\ref{thm:basic}.

The paper is organized as follows:
\begin{enumerate}
\item In Section \ref{sec:background}, we give a brief review of background material. We begin by reviewing the Aztec diamond and stating some fundamental results. We focus on highlighting the relationship with interlacing partitions, interlacing arrays of particles, and Schur polynomials which will be useful in later sections. We then define the $k$-tilings. We state some fundamental results from \cite{LLTaztec}. We also state the necessary properties of the LLT polynomials that will be used in the subsequent sections. Of particular importance to what follows is the bijection between tilings and particle configurations.

\item In Section \ref{sec:Markov}, we present the Markov chain on colored interlacing particle configurations which preserves a class of probability measures on colored particle arrays called `LLT processes'. First, we review the one color case, which is the Schur case, and then we describe the generalization to multiple colors, which is powered by LLT polynomials.

\item In Section \ref{sec:shuffling}, we interpret the particle dynamics described in the previous Section as an operation on dominos. We review the shuffling algorithm for a single tiling of the Aztec diamond before describing the corresponding result for the $k$-tiling. The main result is an algorithm for generating random $k$-tilings with probability proportional to their weight. 

\item In Section \ref{sec:conclusion}, we summarize our results and give some possible avenues of future research.

\item In Appendix \ref{sec:genShuf}, we give an alternate description of our shuffling algorithm in terms of a generalization of the `spider move' on the underlying double dimer model.

\item Finally, in Appendix \ref{sec:sim}, we present some simulations of the $k$-tilings generated using our shuffling algorithm. As noted above, the coupled tilings appear to exhibit limit shapes and arctic curves. We give a discussion of the apparent features.

\end{enumerate}

\noindent{\bf Acknowledgements.} The authors would like to thank Alexei Borodin, Sylvie Corteel, and Ananth Sridhar for many useful discussions.

\section{Background}\label{sec:background}

\subsection{Tilings of Aztec diamond}

\subsubsection{The Aztec diamond}
\label{subsec:ADT}
Let $A_{N+1}$ be the union of faces of $\mathbb{Z}^2$ which are entirely contained in the region $|x| +|y| \leq N+1$. A tiling of the Aztec diamond of rank $N$ is a tiling of the region $A_{N+1}$ with $2 \times 1$ or $1 \times 2$ dominos. See Figure \ref{fig:ADex} for an example.

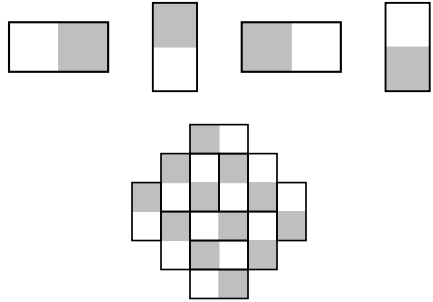
\begin{figure}[ht]
\centering\begin{tabular}{c}
\begin{tabular}{cccc}
\resizebox{1.5cm}{!}{
\begin{tikzpicture}[baseline = (current bounding box).center]
\draw[lightgray, fill=lightgray] (0,0) rectangle (1,1);
\draw[very thick] (-1,0) rectangle (1,1);
\end{tikzpicture}
}
& 
\resizebox{0.75cm}{!}{
\begin{tikzpicture}[baseline = (current bounding box).center]
\draw[lightgray, fill=lightgray] (0,0) rectangle (1,1);
\draw[very thick] (0,-1) rectangle (1,1);
\end{tikzpicture}
}
& 
\resizebox{1.5cm}{!}{
\begin{tikzpicture}[baseline = (current bounding box).center]
\draw[lightgray, fill=lightgray] (0,0) rectangle (1,1);
\draw[very thick] (0,0) rectangle (2,1);
\end{tikzpicture}
}
& 
\resizebox{0.75cm}{!}{
\begin{tikzpicture}[baseline = (current bounding box).center]
\draw[lightgray, fill=lightgray] (0,0) rectangle (1,1);
\draw[very thick] (0,0) rectangle (1,2);
\end{tikzpicture}
}
\end{tabular} \\ \\
\resizebox{0.2\textwidth}{!}{
\begin{tikzpicture}[baseline = (current bounding box).center]
\checkerboard{2}
\draw[ultra thick] (0,-1) rectangle (1,1); \draw[ultra thick] (1,-2) rectangle (2,0); \draw[ultra thick] (1,0) rectangle (2,2);
\draw[ultra thick] (2,-3) rectangle (4,-2); \draw[ultra thick] (2,-1) rectangle (4,0);
\draw[ultra thick] (2,2) rectangle (4,3);
\draw[ultra thick] (2,0) rectangle (3,2); \draw[ultra thick] (2,-2) rectangle (4,-1); \draw[ultra thick] (3,0) rectangle (4,2); 
\draw[ultra thick] (4,-2) rectangle (5,0); \draw[ultra thick] (4,0) rectangle (5,2); 
\draw[ultra thick] (5,-1) rectangle (6,1);  
\end{tikzpicture}
}
\end{tabular}
\caption{The four possible dominos and an example of a tiling of the Aztec diamond of rank 3.} \label{fig:ADex}
\end{figure}

Label the faces by $(i, j) \in (\mathbb{Z}+\frac{1}{2})^2$, and label the diagonals from $0$ to $2 N$ by declaring that the face $(i, j)$ is on diagonal $j-i + N$. 

We assign a (position dependent) weight to each domino in a tiling. Let $C_N = (c_1, \dots, c_N), B_N = (b_1, \dots, b_N)$ be two tuples of real numbers. The domino weights we are interested in are given by:
\begin{itemize}
    \item Suppose a horizontal domino $D$ is occupying the two squares $(i, j), (i+1, j)$ with $(i, j)$ on diagonal $2 m-1$. Then the weight of this domino is $c_m$.
    \item Suppose a horizontal domino $D$ in a tiling is occupying the two squares $(i, j), (i+1, j)$ with $(i, j)$ on diagonal $2 m$. Then the weight of $D$ is $b_{N-m+1}$.
    \item  The weights of vertical dominos are $1$.
\end{itemize}
 The weight of a whole tiling tiling $T $ is given by the product of the weights of each domino,  
$$\text{wt}(T) = \prod_{\text{dominos } D \in T} \text{wt}(D) \;\;.$$
Define the rank-$N$ Aztec diamond partition function with these weights as 
$$Z_{AD}(C_N,B_N) \coloneqq \sum_{\text{tilings } T} \text{wt}(T) \;\;.
$$
The probability of a random rank-$N$ tiling is given by 
$$\frac{\text{wt}(T)}{Z_{AD}(C_N,B_N) } \;\;.$$

\begin{thm}[\cite{steep}]\label{thm:ADPF}
The partition function of the Aztec diamond of rank $N$ with the above weights is given by
\[
Z_{AD}(C_N,B_N) = \prod_{1\le i \le j \le N} (1+c_i b_{N-j+1}).
\]
\end{thm}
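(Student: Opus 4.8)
The plan is to reduce the statement to an iterated application of the Cauchy identity for Schur functions, via the bijection between domino tilings of the Aztec diamond and interlacing sequences of partitions that is set up in Section~\ref{sec:background}. First I would make the bijection explicit: reading the tiling along consecutive diagonals and recording the positions of the appropriate family of particles, a rank-$N$ tiling is encoded by a zigzag sequence
\[
\emptyset = \mu^{(0)} \subset \lambda^{(1)} \supset \mu^{(1)} \subset \lambda^{(2)} \supset \cdots \subset \lambda^{(N)} \supset \mu^{(N)} = \emptyset,
\]
where the steps $\lambda^{(m)}/\mu^{(m-1)}$ are horizontal strips and the steps $\lambda^{(m)}/\mu^{(m)}$ are vertical strips (equivalently, horizontal strips after transposing). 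The crucial bookkeeping step is to check that under this encoding the position-dependent domino weight factors through the strips: a horizontal domino on diagonal $2m-1$ is precisely a box of $\lambda^{(m)}/\mu^{(m-1)}$ and carries weight $c_m$, a horizontal domino on diagonal $2m$ is precisely a box of $\lambda^{(m)}/\mu^{(m)}$ and carries weight $b_{N-m+1}$, and vertical dominoes (weight $1$) correspond to empty strips, so that
\[
\mathrm{wt}(T) = \prod_{m=1}^N c_m^{\,|\lambda^{(m)}/\mu^{(m-1)}|}\, b_{N-m+1}^{\,|\lambda^{(m)}/\mu^{(m)}|} = \prod_{m=1}^N s_{\lambda^{(m)}/\mu^{(m-1)}}(c_m)\; s_{(\lambda^{(m)})'/(\mu^{(m)})'}(b_{N-m+1}),
\]
recognizing single-variable skew Schur functions on the right.

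Summing over all tilings then turns $Z_{AD}(C_N,B_N)$ into the normalization of a Schur process: a sum of products of skew Schur functions over the interlacing sequence above, with boundary condition $\mu^{(0)}=\mu^{(N)}=\emptyset$. I would evaluate this sum by the standard transfer-operator / vertex-operator argument, writing it as a vacuum matrix element of the form $\langle\varnothing|\,E(b_N)\,H(c_1)\,E(b_{N-1})\,H(c_2)\cdots|\varnothing\rangle$ (or, equivalently, applying the skew Cauchy and dual Cauchy identities repeatedly), and commuting every $H$-type operator (horizontal strips, variable $c_i$) past every $E$-type operator (vertical strips, variable $b_j$). Each such commutation produces a scalar factor $(1+c_ib_j)$ from the dual commutation relation; once all the $E$'s are brought to act first on $|\varnothing\rangle$ and the $H$'s act last against $\langle\varnothing|$, the remaining matrix element collapses to $1$. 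Tracking which pairs actually cross — given the alternating order of the operators and the two vacuum boundary conditions — yields exactly the pairs indexed by $1\le i\le j\le N$, and hence the product $\prod_{1\le i\le j\le N}(1+c_ib_{N-j+1})$.

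The main obstacle is the first paragraph rather than the second: one has to pin down the bijection between tilings and partition/particle arrays carefully enough that the three domino-weight conventions (the two horizontal cases on diagonals $2m-1$ and $2m$, and the vertical case) line up exactly with the claimed monomials, including getting the diagonal-to-index dictionary ($2m-1\leftrightarrow c_m$, $2m\leftrightarrow b_{N-m+1}$) and the horizontal-versus-vertical-strip assignment correct, and confirming that the first and last strips are forced to be empty. Once the weighted sum is correctly identified as a Schur-process partition function, the evaluation is routine.

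An alternative, in the spirit of the rest of the paper, is to prove the identity directly from the domino-shuffling recursion: shuffling relates $Z_{AD}$ at rank $N$ to $Z_{AD}$ at rank $N-1$ with appropriately shifted weights, and solving the resulting recursion from the boundary value $Z_{AD}=1$ at rank $0$ gives the product. This route still requires understanding how the position-dependent weights transform under slide, destroy, and create, which is essentially the same bookkeeping as above, so I would prefer the Schur-process argument, which makes the product structure transparent.
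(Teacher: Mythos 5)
Your proposal is correct and follows essentially the same route the paper sketches: encode the tiling as an interlacing sequence of partitions so that the weight becomes a product of single-variable skew Schur functions (a Schur process), then collapse the sum by repeatedly applying the branching rule and dual skew-Cauchy identity (equivalently, commuting the vertex operators), with each $c_i$--$b_{N-j+1}$ crossing for $i\le j$ contributing a factor $(1+c_i b_{N-j+1})$. Your bookkeeping of the diagonal-to-variable dictionary and the horizontal/vertical strip assignment matches the paper's conventions, so no changes are needed.
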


One of the ways this theorem can be proved is via the machinery of \emph{Schur polynomials}, the basics of which we briefly review in the next subsection.

\subsubsection{Schur polynomials and interlacing partitions}
\label{subsubsec:particle_arrays}

A \emph{partition} $\lambda = (\lambda_1,\lambda_2, \lambda_3, \ldots )$ is a non-negative sequence of integers such that $\lambda_1 \ge \lambda_2 \ge \lambda_3 \ge \ldots$. We associate to $\lambda$ its \emph{Young diagram} $D(\lambda) \subseteq \mathbb{Z}\times \mathbb{Z}$, given as
\[
D(\lambda) = \{(i,j) \mid 1 \leq i \leq \ell(\lambda), \; 1 \leq j \leq \lambda_i \}
\]
We draw our diagrams in French notation, in the first quadrant,  as shown in the below example:
\[
\lambda = (4,2,1), \qquad D(\lambda) = 
\ytableausetup{aligntableaux=center}
\begin{ytableau} \\ & \\ & & \bullet & \end{ytableau} 
\]
We refer to the elements in $D(\lambda)$ as {\em cells}. The cell labelled above has coordinates (1,3). The \emph{content} of a cell $u = (i,j)$ in the $i$-th row and $j$-th column of the Young diagram is $c(u) = i-j$. The marked cell above has content $c((1,3))=-2$. The size of a partition is the number of cells in its Young diagram and is denoted by $|\lambda|$.  The above partition has size $|\lambda| =4+2+1=7$. See Figure \ref{fig:YoungDiagramex} for another example.

Given two partitions, $\lambda$ and $\mu$, we say that $\mu$ is \emph{contained in} $\lambda$ and write $\mu \subset \lambda$ if the Young diagram of $\mu$ is contained within $\lambda$. Given that $\mu\subset \lambda$ we can define the \emph{skew diagram} $\lambda/\mu$ as the Young diagram of $\lambda$ with the cells from the Young diagram of $\mu$ removed.

To a partition we can associate an infinite sequence of particles and holes by assigning a particle to every vertical edge on the boundary of its Young diagram, and a hole to every horizontal edge. This is known as the \emph{Maya diagram} of the partition. See Figure \ref{fig:YoungDiagramex}. Note that the Maya diagram has a unique content line such that the number of particles to the right of this line is equal to the number of holes to the left. We call this the \emph{zero-content line} and view it as the center of our Maya diagram. If we place the Maya diagram on $\ZZ + \frac{1}{2}$ centered at zero then the position $x_i$ of the $i$-th particles (counting from right to left) is given by
\[
x_i = \lambda_i- i +\frac{1}{2}.
\]

For every partition $\lambda$ we can associate a second partition $\lambda'$ known as the \emph{conjugate} of $\lambda$. The conjugate $\lambda'$ is defined as the partition whose Young diagram is given by reflecting the Young diagram of $\lambda$ across its zero-content line. For example, $\lambda=(4,3,2,2,1)$, the partition in Fig. \ref{fig:YoungDiagramex},  has conjugate  $\lambda'= (5,4,2,1)$.

Given two partitions $\lambda$ and $\mu$ say that $\lambda$ and $\mu$ \emph{interlace} if
\[
\lambda_1\ge \mu_1 \ge \lambda_2 \ge \mu_2 \ge \ldots
\]
and write $\lambda \succeq \mu $. Say that $\lambda$ and $\mu$ \emph{co-interlace} if their conjugate partitions interlace and write $\lambda \succeq' \mu $.  Note that $\mu \preceq \lambda$ implies that $\mu \subset \lambda$.

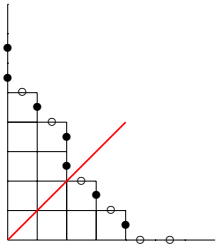
\begin{figure}[ht]
\centering\resizebox{3cm}{!}{
\begin{tikzpicture}[baseline=(current bounding box.center)]
\draw (0,0) grid (2,4); \draw (0,4) grid (1,5); \draw (2,0) grid (3,2); \draw (3,0) grid (4,1);
\node[scale=2] at (0.5,5) {$\circ$}; \node[scale=2] at (1,4.5) {$\bullet$}; \node[scale=2] at (1.5,4) {$\circ$}; \node[scale=2] at (2,3.5) {$\bullet$}; \node[scale=2] at (2,2.5) {$\bullet$}; \node[scale=2] at (2.5,2) {$\circ$}; \node[scale=2] at (3,1.5) {$\bullet$}; \node[scale=2] at (3.5,1) {$\circ$}; \node[scale=2] at (4,0.5) {$\bullet$}; 
\draw (4,0) grid (7,0); \node[scale=2] at (4.5,0) {$\circ$}; \node[scale=2] at (5.5,0) {$\circ$}; 
\draw (0,5) grid (0,8); \node[scale=2] at (0,5.5) {$\bullet$}; \node[scale=2] at (0,6.5) {$\bullet$}; 
\draw[ultra thick, red] (0,0)--(4,4);
\end{tikzpicture} }
\caption{The Young diagram of the partition $\lambda=(4,3,2,2,1)$ along with its Maya diagram: $\ldots \bullet \bullet \enspace \circ\bullet\circ\bullet\bullet\circ\bullet\circ\bullet \enspace \circ \circ \ldots$. The red line indicates the zero-content line of the Young diagram. We have $|\lambda| = 12$.}\label{fig:YoungDiagramex}
\end{figure}

Recall that given two partitions $\mu \subset\lambda$ and variables $X_n = (x_1,\dots, x_n)$ the skew-Schur polynomial is defined as
\[
s_{\lambda/\mu}(X_n) = \sum_{\sigma\in\text{SSYT}(\lambda/\mu)} x^\sigma.
\]
Here the sum is over all \emph{semi-standard Young tableaux} of shape $\lambda/\mu$, where a semi-standard Young tableaux is a filling of the cells of the diagram by the integers $1,\ldots,n$ such that they are weakly increase along the rows and strictly increasing up the columns. 

Here we state some basic identities satisfied by Schur polynomials that will be useful for us (see \cite{macdonald1998symmetric} for more details). For any sets of variables $X = (x_1, \dots, x_l), Y = (y_1, \dots, y_p)$, we have
\begin{prop}[\cite{macdonald1998symmetric}]\label{prop:basicSchur}
Branching rule:
\begin{align*}
    \sum_{\lambda}  s_{\lambda/\nu}(X) s_{\nu/\mu}(Y) 
    &=   s_{\lambda/\mu}(X, Y)
\end{align*}

Dual Skew-Cauchy Identity:
\begin{align*}
     &\sum_{\lambda}  s_{\lambda/\nu}(X) s_{\lambda'/\mu'}(Y) \\
    &= \left( \prod_{i, j} (1+x_i y_j) \right) \sum_{\lambda}  s_{\nu'/\lambda'}(Y) s_{\mu/\lambda}(X)\;\;.
\end{align*}
\end{prop}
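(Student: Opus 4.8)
The plan is to derive both identities by standard symmetric-function techniques; since the proposition is attributed to \cite{macdonald1998symmetric}, the goal is only to indicate the shortest route, and citing Macdonald directly would of course also suffice.

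For the branching rule I would argue combinatorially from the tableau definition of $s_{\lambda/\mu}$. Write $X=(x_1,\dots,x_l)$, $Y=(y_1,\dots,y_p)$, assign the letters $1,\dots,l$ to the $x$'s and $l+1,\dots,l+p$ to the $y$'s, and let $T$ be a semi-standard Young tableau (SSYT) of shape $\lambda/\mu$ with entries in $\{1,\dots,l+p\}$. Since the entries weakly increase along rows and strictly increase up columns, the set $S$ of cells of $D(\lambda)$ that either lie in $D(\mu)$ or carry an entry $\le l$ is again a Young diagram, say $D(\nu)$ with $\mu\subseteq\nu\subseteq\lambda$ (immediate from the row and column inequalities). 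Restricting $T$ to $\nu/\mu$ gives an SSYT with entries in $\{1,\dots,l\}$; the complementary cells $\lambda/\nu$, with each entry decreased by $l$, give an SSYT with entries in $\{1,\dots,p\}$; and the map sending $T$ to this triple (intermediate shape, two tableaux) is a weight-preserving bijection. Summing over $T$ and grouping by the intermediate shape gives $s_{\lambda/\mu}(X,Y)=\sum_{\nu}s_{\nu/\mu}(X)\,s_{\lambda/\nu}(Y)$, which is the branching rule (the name of the summation variable and the order of the two alphabets are immaterial, since Schur functions are symmetric in the combined alphabet); iterating splits the alphabet into any number of blocks.

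For the dual skew-Cauchy identity I would first recall the ordinary skew-Cauchy identity
\[
\sum_\lambda s_{\lambda/\nu}(X)\,s_{\lambda/\mu}(Y)\;=\;\prod_{i,j}\frac{1}{1-x_iy_j}\;\sum_\rho s_{\mu/\rho}(X)\,s_{\nu/\rho}(Y),
\]
which is classical (see \cite{macdonald1998symmetric}), and then apply the fundamental involution $\omega$ of the ring of symmetric functions in the $Y$-variables only. Since $\omega$ is a ring automorphism with $\omega(s_{\kappa/\tau})=s_{\kappa'/\tau'}$, and since $\omega$ in the $Y$-alphabet sends $\prod_{i,j}(1-x_iy_j)^{-1}$ to $\prod_{i,j}(1+x_iy_j)$, the displayed identity turns into $\sum_\lambda s_{\lambda/\nu}(X)\,s_{\lambda'/\mu'}(Y)=\prod_{i,j}(1+x_iy_j)\sum_\rho s_{\mu/\rho}(X)\,s_{\nu'/\rho'}(Y)$, which is precisely the asserted identity after renaming $\rho$ to $\lambda$. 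One caveat: $\omega$ is defined on the ring of symmetric functions rather than on polynomials in finitely many variables, so this should be carried out with countably many $x$'s and $y$'s and then specialized by setting all but $l$ of the $x$'s and all but $p$ of the $y$'s to zero; only finitely many terms on each side survive, giving the finite-variable statement. To prove the ordinary skew-Cauchy identity itself I would use the skewing operators: writing $s_\mu^\perp$ for the adjoint of multiplication by $s_\mu$ in the inner product for which the Schur functions are orthonormal, so that $s_\mu^\perp s_\lambda=s_{\lambda/\mu}$, one applies $(s_\nu^\perp)_X(s_\mu^\perp)_Y$ term by term to the Cauchy identity $\sum_\lambda s_\lambda(X)s_\lambda(Y)=\prod_{i,j}(1-x_iy_j)^{-1}$ and then evaluates the right side using the reproducing property $(s_\mu^\perp)_Y\prod_{i,j}(1-x_iy_j)^{-1}=s_\mu(X)\prod_{i,j}(1-x_iy_j)^{-1}$ together with the Leibniz rule $u^\perp(fg)=\sum(u_{(1)}^\perp f)(u_{(2)}^\perp g)$ for $\Delta s_\nu=\sum_\tau s_{\nu/\tau}\otimes s_\tau$; after re-indexing this collapses to the stated form. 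A purely bijective alternative is a skew version of the RSK correspondence, equivalently the Lindstr\"om--Gessel--Viennot lemma for non-intersecting lattice paths.

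The branching rule is elementary, and the passage from the ordinary to the dual skew-Cauchy identity via $\omega$ is a one-line move once the finite-versus-infinite-variable point is handled; the real content, and hence the main obstacle, is the ordinary skew-Cauchy identity. In the Hopf-algebra approach the delicate part is bookkeeping — keeping straight which alphabet each skewing operator acts in, and correctly expanding $\Delta s_\nu$ when distributing $s_\nu^\perp$ across the product — while in the bijective approach it is verifying that the skew RSK correspondence is weight-preserving and respects the prescribed boundary shapes $\mu$ and $\nu$. Both routes are entirely standard, so in the write-up I would either reproduce the short skewing-operator computation or simply cite \cite{macdonald1998symmetric}.
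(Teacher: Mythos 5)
Your proposal is correct. The paper offers no proof of this proposition --- it is stated as a classical fact with a citation to Macdonald --- and your two arguments (the weight-preserving tableau-splitting bijection for the branching rule, and the involution $\omega$ applied in the $Y$-alphabet to the ordinary skew-Cauchy identity, carried out in infinitely many variables and then specialized) are exactly the standard derivations that the citation points to, with the finite-versus-infinite-variable subtlety correctly handled. Note also that you have silently repaired a typo in the paper's statement of the branching rule: the sum there should run over the intermediate shape $\nu$, not over $\lambda$, which appears free on the right-hand side, and your version $s_{\lambda/\mu}(X,Y)=\sum_{\nu}s_{\nu/\mu}(X)\,s_{\lambda/\nu}(Y)$ is the intended one.
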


\subsubsection{The Aztec diamond and Schur processes}

There is a bijection between tilings of the Aztec diamond of rank $N$ and sequences of interlacing partitions
\begin{equation}\label{eq:interlace}
\emptyset \preceq \lambda^{(1)} \succeq' \mu^{(2)} \preceq \ldots \preceq \lambda^{(N-1)} \succeq' \mu^{(N)} \preceq \lambda^{(N)} \succeq' \emptyset.
\end{equation}
Given a tiling of the Aztec diamond of rank $N$ assign particles and holes to the dominos according to the rules
\[
\centering\resizebox{1cm}{!}{
\begin{tikzpicture}
\draw[lightgray] (0,0) rectangle (1,1);
\draw[lightgray,fill=lightgray] (1,0) rectangle (2,1);
\draw[very thick,fill=white] (0.5,0.5) circle (5pt);
\draw[very thick,fill=white] (1.5,0.5) circle (5pt);
\draw[very thick] (0,0) rectangle (2,1);
\end{tikzpicture}},\;\;\;
\centering
\resizebox{.5cm}{!}{
\begin{tikzpicture}
\draw[lightgray] (0,0) rectangle (1,1);
\draw[lightgray,fill=lightgray] (0,1) rectangle (1,2);
\draw[very thick] (0,0) rectangle (1,2);
\draw[very thick,fill] (0.5,0.5) circle (5pt);
\draw[very thick,fill] (0.5,1.5) circle (5pt);
\end{tikzpicture}},\;\;\;
\centering\resizebox{1cm}{!}{ \begin{tikzpicture}
\draw[lightgray,fill=lightgray] (0,0) rectangle (1,1);
\draw[lightgray] (1,0) rectangle (2,1);
\draw[very thick] (0,0) rectangle (2,1);
\draw[very thick,fill] (0.5,0.5) circle (5pt);
\draw[very thick,fill] (1.5,0.5) circle (5pt);
\end{tikzpicture}},\;\;\;
\centering\resizebox{.5cm}{!}{
\begin{tikzpicture}
\draw[lightgray, fill=lightgray] (0,0) rectangle (1,1);
\draw[lightgray] (0,1) rectangle (1,2);
\draw[very thick,fill=white] (0.5,0.5) circle (5pt);
\draw[very thick,fill=white] (0.5,1.5) circle (5pt);
\draw[very thick] (0,0) rectangle (1,2);
\end{tikzpicture}}.
\]
Along each diagonal slice of the Aztec diamond view the resulting sequence of particles and holes as the Maya diagram of some partition by extending it infinitely to the South-West with particles and infinitely to the North-East by holes. Let us index the slices starting from $0$, such that $\mu^{(i)}$ is the partition along slice $2i-2$ and $\lambda^{(i)}$ is the partition along slice $2i-1$. Note that $\mu^{(1)}=\mu^{(N+1)}=\emptyset$ is forced. Figure \ref{fig:ADandMayaDiagr} gives an example of our notation and this bijection. One can check \cite{steep, johansson2} that the requirement that these partitions come from a valid tiling is exactly the interlacing condition given in Eqn. (\ref{eq:interlace}).

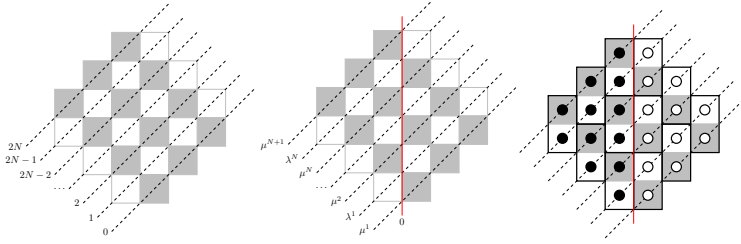
\begin{figure}[ht]
\centering\resizebox{10cm}{!}{
\begin{tikzpicture}
\checkerboard{2}
\draw[dashed] (2,-4)--(7,1); \node[below, left] at (2,-4) {$0$};
\draw[dashed] (1.5,-3.5)--(6.5,1.5); \node[below, left] at (1.5,-3.5) {$1$};
\draw[dashed] (1,-3)--(6,2); \node[below, left] at (1,-3) {$2$};
\draw[dashed] (0.5,-2.5)--(5.5,2.5); \node[below, left] at (0.5,-2.5) {$\ldots$};
\draw[dashed] (0,-2)--(5,3); \node[below, left] at (0,-2) {$2N-2$};
\draw[dashed] (-0.5,-1.5)--(4.5,3.5); \node[below, left] at (-0.5,-1.5) {$2N-1$};
\draw[dashed] (-1,-1)--(4,4); \node[below, left] at (-1,-1) {$2N$};
\end{tikzpicture}
\begin{tikzpicture}
\checkerboard{2}
\draw[dashed] (2,-4)--(7,1); \node[below, left] at (2,-4) {$\mu^1$};
\draw[dashed] (1.5,-3.5)--(6.5,1.5); \node[below, left] at (1.5,-3.5) {$\lambda^1$};
\draw[dashed] (1,-3)--(6,2); \node[below, left] at (1,-3) {$\mu^2$};
\draw[dashed] (0.5,-2.5)--(5.5,2.5); \node[below, left] at (0.5,-2.5) {$\ldots$};
\draw[dashed] (0,-2)--(5,3); \node[below, left] at (0,-2) {$\mu^{N}$};
\draw[dashed] (-0.5,-1.5)--(4.5,3.5); \node[below, left] at (-0.5,-1.5) {$\lambda^{N}$};
\draw[dashed] (-1,-1)--(4,4); \node[below, left] at (-1,-1) {$\mu^{N+1}$};
\draw[red,thick] (3,-3.5)--(3,3.5); \node[below] at (3,-3.5) {$0$};
\end{tikzpicture}
\begin{tikzpicture}
\checkerboard{2}
\draw[dashed] (2,-4)--(7,1);
\draw[dashed] (1.5,-3.5)--(6.5,1.5);
\draw[dashed] (1,-3)--(6,2); 
\draw[dashed] (0.5,-2.5)--(5.5,2.5);
\draw[dashed] (0,-2)--(5,3);
\draw[dashed] (-0.5,-1.5)--(4.5,3.5); 
\draw[dashed] (-1,-1)--(4,4);
\draw[very thick,fill] (0.5,0.5) circle (5pt); 
\draw[very thick,fill] (0.5,-0.5) circle (5pt); 
\draw[very thick,fill] (1.5,0.5) circle (5pt); \draw[very thick,fill] (2.5,0.5) circle (5pt); \draw[very thick,fill=white] (3.5,0.5) circle (5pt); \draw[very thick,fill=white] (4.5,0.5) circle (5pt); \draw[very thick,fill=white] (5.5,0.5) circle (5pt);
\draw[very thick,fill] (1.5,1.5) circle (5pt); \draw[very thick,fill] (2.5,1.5) circle (5pt); \draw[very thick,fill=white] (3.5,1.5) circle (5pt); \draw[very thick,fill=white] (4.5,1.5) circle (5pt);
\draw[very thick,fill] (2.5,2.5) circle (5pt); \draw[very thick,fill=white] (3.5,2.5) circle (5pt);
\draw[very thick,fill] (1.5,-0.5) circle (5pt); \draw[very thick,fill] (2.5,-0.5) circle (5pt); \draw[very thick,fill=white] (3.5,-0.5) circle (5pt); \draw[very thick,fill=white] (4.5,-0.5) circle (5pt); \draw[very thick,fill=white] (5.5,-0.5) circle (5pt);
\draw[very thick,,fill] (1.5,-1.5) circle (5pt); \draw[very thick,fill] (2.5,-1.5) circle (5pt); \draw[very thick,fill=white] (3.5,-1.5) circle (5pt); \draw[very thick,fill=white] (4.5,-1.5) circle (5pt);
\draw[very thick,fill] (2.5,-2.5) circle (5pt); \draw[very thick,fill=white] (3.5,-2.5) circle (5pt);
\draw[very thick] (0,-1) rectangle (1,1); \draw[very thick] (1,0) rectangle (2,2); \draw[very thick] (2,1) rectangle (3,3);
\draw[very thick,] (3,1) rectangle (4,3); \draw[very thick] (4,0) rectangle (5,2);
\draw[very thick] (5,-1) rectangle (6,1);
\draw[very thick] (1,-2) rectangle (2,0); \draw[very thick] (2,-3) rectangle (3,-1); \draw[very thick] (3,-3) rectangle (4,-1); 
\draw[very thick] (4,-2) rectangle (5,0); \draw[very thick] (2,-1) rectangle (3,1); 
\draw[very thick] (3,-1) rectangle (4,1);
\draw[red,thick] (3,-3.5)--(3,3.5); 
\end{tikzpicture}
}
\caption{Left and Center: Assigning partitions to slices of the Aztec diamond. The red line indicates the zero content line for the partitions. Right: The tiling corresponding to all partitions being empty for the Aztec diamond of rank 3.} \label{fig:ADandMayaDiagr}
\end{figure}

Given the bijection from tilings to sequences of interlacing partitions 
\[
\lambda^{(1)}, \mu^{(2)}, \lambda^{(2)}, \dots, \mu^{(N)}, \lambda^{(N)}
\]
described above, one can write the weight of the tiling in terms of Schur polynomials. The weight of a tiling is given by
\[
s_{\lambda^{(1)}}(c_1) s_{(\lambda^{(1)}/\mu^{(2)})'}(b_N) s_{\lambda^{(2)}/\mu^{(2)}}(c_2) \ldots s_{\lambda^{(N)}/\mu^{(N)}}(c_N) s_{(\lambda^{(N)})'}(b_1)
\]
where here we used the notation $(\lambda/\mu)' \coloneqq \lambda'/\mu' $. 

\begin{remark}\label{rmk:schur}
A probability measure on sequences of partitions of the form 
$$\frac{1}{Z}s_{\lambda^{(1)}}(u_1) s_{(\lambda^{(1)}/\mu^{(2)})'}(v_1) s_{\lambda^{(2)}/\mu^{(2)}}(u_2) \ldots s_{\lambda^{(N)}/\mu^{(N)}}(u_N) s_{(\lambda^{(N)})'}(v_N)$$
with $u_1,\dots, u_N, v_1,\dots, v_N \in \mathbb{R}_{>0}$ is a particular case of a \emph{Schur Process}. Schur processes are well-studied. Using them one can derive exact determinantal formulas for correlation functions and study various statistics of random tilings asymptotically as $N \rightarrow \infty$, see \cite{BorodinGorinSPB12} for a survey and see also \cite{okounkov2003correlation}. 
\end{remark}

In particular, we have 
\begin{prop}[\cite{steep}]
The partition function of the Aztec diamond of rank $N$ is given by
\[
\begin{aligned}
&Z_{AD}(C_N,B_N) =\\
& \qquad \sum s_{\lambda^{(1)}}(c_1) s_{(\lambda^{(1)}/\mu^{(2)})'}(b_N) s_{\lambda^{(2)}/\mu^{(2)}}(c_2) \ldots s_{\lambda^{(N)}/\mu^{(N)}}(c_N) s_{(\lambda^{(N)})'}(b_1) \;\;.
\end{aligned}
\]
where the sum is over all tuples of partitions $\lambda^{(1)}, \mu^{(2)}, \lambda^{(2)}, \dots, \mu^{(N)}, \lambda^{(N)}$ satisfying the interlacing condition Eqn. (\ref{eq:interlace}).
\end{prop}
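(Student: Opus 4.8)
The statement is essentially a repackaging of the definition $Z_{AD}(C_N,B_N)=\sum_T \mathrm{wt}(T)$, so the plan is to (i) re-index the sum over tilings as a sum over interlacing sequences of partitions using the bijection recalled above around Eqn.~(\ref{eq:interlace}), and (ii) rewrite each summand $\mathrm{wt}(T)$ as the claimed product of (skew) Schur polynomials. Step (i) is immediate once one grants the bijection: distinct tilings of rank $N$ correspond to distinct sequences $\emptyset\preceq\lambda^{(1)}\succeq'\mu^{(2)}\preceq\cdots\preceq\lambda^{(N)}\succeq'\emptyset$, and every such sequence arises, so the two sums range over the same index set. The content is therefore concentrated in (ii), the weight identity $\mathrm{wt}(T)=s_{\lambda^{(1)}}(c_1)\,s_{(\lambda^{(1)}/\mu^{(2)})'}(b_N)\cdots s_{\lambda^{(N)}/\mu^{(N)}}(c_N)\,s_{(\lambda^{(N)})'}(b_1)$, stated above; after that the Proposition follows by combining with (i).

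To establish the weight identity I would use the one-variable specializations of Schur polynomials: $s_{\lambda/\mu}(x)=x^{|\lambda|-|\mu|}$ when $\lambda/\mu$ is a horizontal strip and $0$ otherwise, and dually $s_{(\lambda/\mu)'}(x)=x^{|\lambda|-|\mu|}$ when $\lambda/\mu$ is a vertical strip and $0$ otherwise. Since $\lambda/\mu$ is a horizontal strip precisely when $\lambda\succeq\mu$ and a vertical strip precisely when $\lambda\succeq'\mu$, the product on the right-hand side is nonzero exactly on the sequences described by Eqn.~(\ref{eq:interlace}), and on such a sequence it collapses to the monomial $\prod_{m=1}^{N} c_m^{\,|\lambda^{(m)}|-|\mu^{(m)}|}\, b_{N-m+1}^{\,|\lambda^{(m)}|-|\mu^{(m+1)}|}$, with the conventions $\mu^{(1)}=\mu^{(N+1)}=\emptyset$. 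It then remains to match these exponents to domino counts: using the four local rules that translate dominos of $T$ into particles and holes along the diagonal slices, one checks that the horizontal dominos straddling diagonals $2m-2$ and $2m-1$ (each of weight $c_m$) are exactly those accounting for the passage of the Maya diagram from $\mu^{(m)}$ to $\lambda^{(m)}$, and there are $|\lambda^{(m)}|-|\mu^{(m)}|$ of them; likewise the horizontal dominos straddling diagonals $2m-1$ and $2m$ (each of weight $b_{N-m+1}$) number $|\lambda^{(m)}|-|\mu^{(m+1)}|$; vertical dominos carry weight $1$ and drop out of the product. Hence $\mathrm{wt}(T)$ equals the monomial above, which completes (ii).

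I expect the only genuinely technical point to be this last slice-by-slice bookkeeping in (ii): verifying, from the domino-to-particle dictionary, that a horizontal domino between two consecutive slices corresponds to exactly one box added when passing from one partition to the next while a vertical domino leaves the size unchanged, and that the interlacing/co-interlacing constraints built into Eqn.~(\ref{eq:interlace}) are precisely the horizontal/vertical strip conditions. Both ingredients—the bijection of Eqn.~(\ref{eq:interlace}) and the combinatorial matching just described—are carried out in \cite{steep,johansson2}, so in the write-up I would recall the dictionary briefly, invoke the one-variable Schur specializations, and assemble (i) and (ii); everything else is routine.
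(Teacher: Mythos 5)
Your proposal is correct and follows essentially the same route as the paper, which likewise reduces the Proposition to the tiling--partition bijection together with the weight identity $\mathrm{wt}(T)=s_{\lambda^{(1)}}(c_1)s_{(\lambda^{(1)}/\mu^{(2)})'}(b_N)\cdots s_{(\lambda^{(N)})'}(b_1)$ and defers the slice-by-slice verification to \cite{steep,johansson2}. Your added detail on the one-variable specializations (horizontal/vertical strips matching the interlacing and co-interlacing conditions) is accurate and simply makes explicit what the paper leaves to the references.
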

By repeated applications of the identities in Proposition \ref{prop:basicSchur}, the above simplifies to the product in Theorem \ref{thm:ADPF}.

It will often be more convenient to consider only the particle positions. From this point of view, the tiling becomes an array of interlacing particles. Let $x^{(n)} = \{x_1^{(n)} > x^{(n)}_2 > \cdots > x^{(n)}_n \}$ be the position of particles corresponding to $\lambda^{(n)}$, and $y^{(n)} = \{y_1^{(n)} > y^{(n)}_2 > \cdots > y^{(n)}_{n-1} \}$ those corresponding to $\mu^{(n)}$. A set of particle positions corresponds to a tiling if and only if they satisfy the interlacing conditions 
\begin{equation}\label{eq:particleInterlace}
\begin{aligned}
x_i^{(n)} &\geq y^{(n)}_i > x_{i+1}^{(n)} \\
x_i^{(n)} &\geq y^{(n+1)}_i \geq x_{i}^{(n)}-1 
\end{aligned}
\end{equation}
and the bounds
\begin{equation}\label{eq:particleBounds}
\begin{aligned}
    -n + \frac{1}{2} &\leq x_i^{(n)} \leq N - n +\frac{1}{2} \\
-n +1 + \frac{1}{2} &\leq y_i^{(n)} \leq N - n +\frac{1}{2}
\end{aligned}
\end{equation}
for each $n = 1, \dots, N$. See Figure \ref{ex:3til-bijection} for several examples.


\subsection{Coupled tilings and LLT polynomials}\label{sec:couples_tilings}

\subsubsection{$k$-tilings of the Aztec Diamond}
In this section we define the interacting $k$-tilings of the Aztec Diamond. See \cite{LLTaztec} for a more detailed discussion.

Consider an Aztec diamond of rank $N$. A \emph{$k$-tiling} $\bm{T} = (T^{(1)},\ldots,T^{(k)})$ is a $k$-tuple of tilings of the Aztec diamond. We will often draw the different tilings in different colors (see Figure \ref{ex:3til-bijection}) and refer to tiling $T^{(a)}$ as being color $a$. We order the colors so that color $a$ is smaller than color $b$ if $a<b$.

If $C_N = (c_1, \dots, c_N), B_N = (b_1, \dots, b_N)$, each of the tilings $T^{(a)}$ has its own weight $\text{wt}(T^{(a)})$ by giving the horizontal dominos weights $c_m, b_{N-m+1}$ on diagonals $2 m -1 $ and $2m$, respectively, as described in subsection \ref{subsec:ADT}. Now we define an interaction between pairs of tilings. Consider two tilings $T^{(a)}$ and $T^{(b)}$ with $a<b$. Let blue be the smaller color and red the larger color. We define an interaction between the two tilings to be any instance of the local configuration
\[
\resizebox{2cm}{!}{
\begin{tikzpicture}[baseline = (current bounding box).center]
\draw[lightgray] (0,0) rectangle (1,1);
\draw[lightgray,fill=lightgray] (1,0) rectangle (2,1);

\draw[very thick, blue] (0,0) rectangle (2,1);
\draw[very thick, red] (0.05,0.05) rectangle (2.05,1.05);
\end{tikzpicture}
}
\text{, }
\resizebox{3cm}{!}{
\begin{tikzpicture}[baseline = (current bounding box).center]
\draw[lightgray] (0,0) rectangle (1,1);
\draw[lightgray,fill=lightgray] (1,0) rectangle (2,1);
\draw[lightgray,fill=lightgray] (-1,0) rectangle (0,1);
\draw[very thick, blue] (0,0) rectangle (2,1);
\draw[very thick, red] (-1.05,0.05) rectangle (1.05,1.05);
\end{tikzpicture}
}
\text{, }
\resizebox{2cm}{!}{
\begin{tikzpicture}[baseline = (current bounding box).center]
\draw[lightgray] (0,0) rectangle (1,1);
\draw[lightgray,fill=lightgray] (1,0) rectangle (2,1);
\draw[lightgray,fill=lightgray] (0,1) rectangle (1,2);
\draw[very thick, blue] (0,0) rectangle (2,1);
\draw[very thick, red] (0.05,0.05) rectangle (1.05,2.05);
\end{tikzpicture} 
}
\text{, or}
\resizebox{2cm}{!}{
\begin{tikzpicture}[baseline = (current bounding box).center]
\draw[lightgray] (0,0) rectangle (1,1);
\draw[lightgray,fill=lightgray] (0,0) rectangle (1,1);
\draw[lightgray,fill=lightgray] (1,1) rectangle (2,2);
\draw[very thick, red] (0,0) rectangle (2,1);
\draw[very thick, blue] (1.05,0.05) rectangle (2.05,2.05);
\end{tikzpicture}
}
\]
when the two tilings are superimposed on top of one another. 
\begin{remark}
A $k$-tiling with these interactions is called the ``white-pink" model in \cite{LLTaztec}.
\end{remark}
Now we can define the weights we use for $k$-tilings.
\begin{definition}
The weight of a $k$-tiling is 
$$\text{wt}(\bm{T}) \coloneqq t^{\text{\# interactions}}\prod_{i=1}^k \text{wt}(T^{(i)}) \;\;.$$
\end{definition}
As usual we will study the probability measure on $k$-tilings where the probability of each $k$-tiling is proportional to its weight:
$$P(\mathbf{T}) = \frac{\text{wt}(\bm{T})}{Z^{(k)}_{AD}(C_N,B_N;t)} \;\;.$$
The partition function $Z^{(k)}_{AD}(C_N,B_N;t)$ can be written in a simple product form.
\begin{thm}[\cite{LLTaztec}]
The partition function $Z^{(k)}_{AD}(C_N,B_N;t)$ of the $k$-tilings of the Aztec diamond of rank $N$ is given by
\begin{equation} \label{eq:kADpartitionfunction}
    Z^{(k)}_{AD}(C_N,B_N;t) = \prod_{l=0}^{k-1} \prod_{1\le i \le j \le N} (1+c_i b_{N-j+1}t^l).
\end{equation}
\end{thm}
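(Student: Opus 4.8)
The plan is to mirror the Schur-polynomial proof of Theorem~\ref{thm:ADPF} recalled above, with Schur polynomials replaced throughout by the LLT polynomials of Lascoux--Leclerc--Thibon. Since a $k$-tiling is by definition a $k$-tuple of ordinary tilings, applying the bijection of Section~\ref{subsubsec:particle_arrays} to each color separately identifies a $k$-tiling of the rank-$N$ Aztec diamond with a tuple of sequences of componentwise-interlacing partitions
\[
\emptyset \preceq \bm{\lambda}^{(1)} \succeq' \bm{\mu}^{(2)} \preceq \cdots \preceq \bm{\lambda}^{(N)} \succeq' \emptyset,
\]
where $\bm{\lambda}^{(n)} = (\lambda^{(n),1},\dots,\lambda^{(n),k})$ and the relations $\preceq$, $\succeq'$ are imposed in each of the $k$ components. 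Thus $Z^{(k)}_{AD}(C_N,B_N;t)$ is the sum of $\mathrm{wt}(\bm T)$ over all such tuples.

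The first and crucial step is to rewrite $\mathrm{wt}(\bm T)$ in terms of LLT polynomials. The product $\prod_a \mathrm{wt}(T^{(a)})$ is, by the $k=1$ discussion, the product over diagonal slices of single-variable Schur polynomials in the $c$'s and $b$'s; the new ingredient is the factor $t^{\#\mathrm{interactions}}$. I would show that the interaction count splits as a sum of local contributions, one attached to each diagonal slice, and that the contribution of a given slice is exactly the inversion (spin) statistic $\mathrm{inv}$ entering the definition of the LLT polynomial of the tuple of skew shapes read along that slice. Because each slice carries a single domino weight, the relevant single-variable specialization is $\mathcal G_{\bm\lambda^{(n)}/\bm\mu^{(n)}}(c_n;t) = c_n^{|\bm\lambda^{(n)}/\bm\mu^{(n)}|}\,t^{\mathrm{inv}}$, which is nonzero precisely when each component of $\bm\lambda^{(n)}/\bm\mu^{(n)}$ is a horizontal strip --- i.e.\ precisely under the interlacing condition --- and similarly for the conjugated (vertical-strip) slices in the $b$-variables. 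Collecting the factors gives
\[
\mathrm{wt}(\bm T) = \mathcal G_{\bm\lambda^{(1)}}(c_1;t)\,\mathcal G_{(\bm\lambda^{(1)}/\bm\mu^{(2)})'}(b_N;t)\,\mathcal G_{\bm\lambda^{(2)}/\bm\mu^{(2)}}(c_2;t)\cdots \mathcal G_{(\bm\lambda^{(N)})'}(b_1;t),
\]
so that $Z^{(k)}_{AD}$ is the partition function of an ``LLT process'' in the sense of Section~\ref{sec:Markov}.

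The second step is to collapse this sum using the two LLT analogues of Proposition~\ref{prop:basicSchur}: the branching rule $\sum_{\bm\nu}\mathcal G_{\bm\lambda/\bm\nu}(X;t)\,\mathcal G_{\bm\nu/\bm\mu}(Y;t) = \mathcal G_{\bm\lambda/\bm\mu}(X,Y;t)$, and the dual skew-Cauchy identity, whose single-variable form replaces the Schur Cauchy kernel $\prod_{i,j}(1+x_iy_j)$ by $\prod_{i,j}\prod_{l=0}^{k-1}(1+x_iy_j t^l)$; this is exactly where the extra product over $l$ in \eqref{eq:kADpartitionfunction} comes from. Pairing each $c_i$ with $b_{N-j+1}$ for $1\le i\le j\le N$ via the Cauchy identity, interleaved with branching steps to merge variables and shuffling the conjugation through in exactly the same pattern as in the Schur computation, peels off one factor $\prod_{l=0}^{k-1}(1+c_ib_{N-j+1}t^l)$ per pair and finally reduces the sum over tuples to $1$, which is the asserted formula.

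The main obstacle is the first step: verifying that the domino interaction statistic agrees, slice by slice, with the LLT inversion statistic. This amounts to matching each of the four local interaction patterns of a pair of colors (with the convention that the smaller color is ``blue'') against the definition of $\mathrm{inv}$ for a pair of cells in a tuple of semistandard tableaux, while carefully tracking the color order and the fact that odd slices contribute horizontal strips whereas even slices contribute their conjugates. Once this combinatorial dictionary is established, the remaining manipulations are the same formal identity-juggling that already proves the $k=1$ case, now performed with LLT polynomials; the branching and dual Cauchy identities for LLT polynomials are the only nontrivial inputs and are available in the literature (see \cite{LLTaztec} and the references therein).
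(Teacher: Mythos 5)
Your proposal follows essentially the same route as the paper (which itself defers to \cite{LLTaztec}): your first step is exactly Proposition~\ref{prop:LLTaztec}, expressing the $k$-tiling weight slice-by-slice as single-variable LLT polynomials, and your second step is the telescoping via the branching rule and dual Cauchy identity of Propositions~\ref{prop:branching} and~\ref{prop:Cauchy}. The only bookkeeping point to be careful about is that the even slices are not literally ``LLT of the conjugate tuple'' but the dual polynomials $\mathcal{\tilde L}$ together with the correction factors $t^{\tilde d(\bm{\lambda},\bm{\mu})}$, which is the form in which the Cauchy identity is stated and which Lemma~\ref{lem:tpow} matches against the domino interaction count.
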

 Note that when $t=1$ the tilings are independent and we have
\[
 Z^{(k)}_{AD}(C_N,B_N;1) = \left(Z_{AD}(C_N,B_N)\right)^k,
\]
that is, the partition function is a product of $k$ copies of the partition function for a single tiling of the Aztec diamond, as one would expect. More surprisingly, when $t=0$ we have
\[
 Z^{(k)}_{AD}(C_N,B_N;0) = Z_{AD}(C_N,B_N),
\]
that is, the partition function of the $k$-tiling is equal to the partition function of a single tiling. See \cite{LLTaztec} for a bijective proof of this fact.

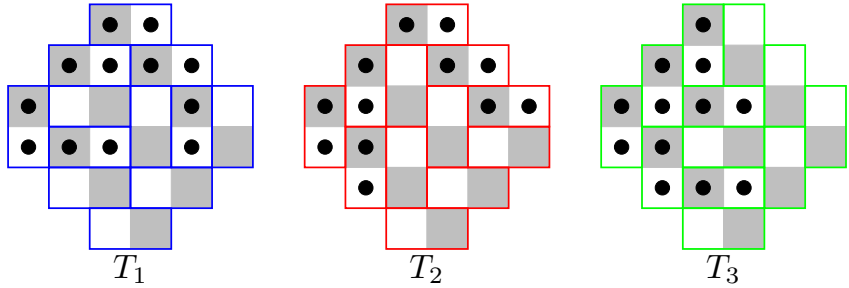
\begin{figure}[ht]
\begin{center}
\resizebox{\textwidth}{!}{
\begin{tabular}{ccc}
\resizebox{0.22\textwidth}{!}{
\begin{tikzpicture}[baseline = (current bounding box).center]
\checkerboard{2}
\draw[fill=black] (4.5,-0.5) circle (5pt);
\draw[fill=black] (4.5,0.5) circle (5pt);
\draw[fill=black] (2.5,-0.5) circle (5pt); \draw[fill=black] (4.5,1.5) circle (5pt);
\draw[fill=black] (1.5,-0.5) circle (5pt); \draw[fill=black] (3.5,1.5) circle (5pt);
\draw[fill=black] (0.5,-0.5) circle (5pt); \draw[fill=black] (2.5,1.5) circle (5pt); \draw[fill=black] (3.5,2.5) circle (5pt);
\draw[fill=black] (0.5,0.5) circle (5pt); \draw[fill=black] (1.5,1.5) circle (5pt); \draw[fill=black] (2.5,2.5) circle (5pt); 

\draw[very thick, blue] (0,-1) rectangle (1,1); \draw[very thick, blue] (1,-2) rectangle (3,-1); \draw[very thick, blue] (1,-1) rectangle (3,0); \draw[very thick, blue] (1,0) rectangle (3,1); \draw[very thick, blue] (1,1) rectangle (3,2); 
\draw[very thick, blue] (2,-3) rectangle (4,-2); \draw[very thick, blue] (2,2) rectangle (4,3);
\draw[very thick, blue] (3,-2) rectangle (5,-1); \draw[very thick, blue] (3,-1) rectangle (4,1); \draw[very thick, blue] (3,1) rectangle (5,2);
\draw[very thick, blue] (4,-1) rectangle (5,1);
\draw[very thick, blue] (5,-1) rectangle (6,1);
\end{tikzpicture}
}
&
\resizebox{0.22\textwidth}{!}{
\begin{tikzpicture}[baseline = (current bounding box).center]
\checkerboard{2}
\draw[fill=black] (5.5,0.5) circle (5pt);
\draw[fill=black] (4.5,0.5) circle (5pt);
\draw[fill=black] (1.5,-1.5) circle (5pt); \draw[fill=black] (4.5,1.5) circle (5pt);
\draw[fill=black] (1.5,-0.5) circle (5pt); \draw[fill=black] (3.5,1.5) circle (5pt);
\draw[fill=black] (0.5,-0.5) circle (5pt); \draw[fill=black] (1.5,0.5) circle (5pt); \draw[fill=black] (3.5,2.5) circle (5pt);
\draw[fill=black] (0.5,0.5) circle (5pt); \draw[fill=black] (1.5,1.5) circle (5pt); \draw[fill=black] (2.5,2.5) circle (5pt); 

\draw[very thick, red] (0,-1) rectangle (1,1); \draw[very thick, red] (1,-2) rectangle (2,0);  \draw[very thick, red] (1,0) rectangle (2,2);  
\draw[very thick, red] (2,-3) rectangle (4,-2); \draw[very thick, red] (2,-2) rectangle (3,0); \draw[very thick, red] (2,0) rectangle (3,2); \draw[very thick, red] (2,2) rectangle (4,3);
\draw[very thick, red] (3,-2) rectangle (5,-1); \draw[very thick, red] (3,-1) rectangle (4,1); \draw[very thick, red] (3,1) rectangle (5,2);
\draw[very thick, red] (4,-1) rectangle (6,0); \draw[very thick, red] (4,0) rectangle (6,1);
\end{tikzpicture}
}
&
\resizebox{0.22\textwidth}{!}{
\begin{tikzpicture}[baseline = (current bounding box).center]
\checkerboard{2}
\draw[fill=black] (3.5,-1.5) circle (5pt);
\draw[fill=black] (2.5,-1.5) circle (5pt);
\draw[fill=black] (1.5,-1.5) circle (5pt); \draw[fill=black] (3.5,0.5) circle (5pt);
\draw[fill=black] (1.5,-0.5) circle (5pt); \draw[fill=black] (2.5,0.5) circle (5pt);
\draw[fill=black] (0.5,-0.5) circle (5pt); \draw[fill=black] (1.5,0.5) circle (5pt); \draw[fill=black] (2.5,1.5) circle (5pt);
\draw[fill=black] (0.5,0.5) circle (5pt); \draw[fill=black] (1.5,1.5) circle (5pt); \draw[fill=black] (2.5,2.5) circle (5pt);

\draw[very thick, green] (0,-1) rectangle (1,1); 
\draw[very thick, green] (1,-2) rectangle (2,0);  \draw[very thick, green] (1,0) rectangle (2,2);  
\draw[very thick, green] (2,-3) rectangle (4,-2); \draw[very thick, green] (2,-2) rectangle (4,-1); \draw[very thick, green] (2,-1) rectangle (4,0); \draw[very thick, green] (2,0) rectangle (4,1); \draw[very thick, green] (2,1) rectangle (3,3);
\draw[very thick, green] (3,1) rectangle (4,3);
\draw[very thick, green] (4,-2) rectangle (5,0); \draw[very thick, green] (4,0) rectangle (5,2);
\draw[very thick, green] (5,-1) rectangle (6,1);
\end{tikzpicture}
}
\\ 
$T_1$ & $T_2$ & $T_3$
\end{tabular}
}
\end{center}
\caption{An example of a 3-tiling of the rank-3 Aztec diamond along with the corresponding particles. This $k$-tiling corresponds to the sequence of interlacing tuples of partitions: $((0),(0),(0))\preceq ((2),(3),(1)) \succeq' ((2),(2),(0))\preceq ((2,1),(2,0),(1,0)) \succeq' ((1,0),(1,0),(0,0)) \preceq ((1,1,0),(1,0,0),(0,0,0)) \succeq'  ((0,0,0),(0,0,0),(0,0,0))$.
{\color{blue} }} 
\label{ex:3til-bijection}
\end{figure}

\subsubsection{LLT polynomials}
While Schur polynomials were valuable in studying the single tilings of the Aztec diamond, for the $k$-tilings it is the LLT polynomials that are useful. 

LLT polynomials are certain symmetric polynomials introduced by Lascoux, Lecler, and Thibon \cite{LLT1997} as the generating functions of semi-standard ribbon tableaux counting a spin statistic.  Recently, a version of these polynomials called coinversion LLT polynomials were constructed as the partition function of a class of integrable lattice models \cite{color1,LLT,CFsuper,GKsuper}. In \cite{LLTaztec}, the authors used the lattice model formulation of the coinversion LLT polynomials to compute the partition function of the interacting $k$-tilings of the Aztec diamond that are the focus of this paper.

Here we collect the relevant definitions and properties of the coinversion LLT polynomials.

Let $\bm{\lambda}/\bm{\mu} = (\lambda^{(1)}/\mu^{(1)},\ldots,\lambda^{(k)}/\mu^{(k)})$ and be a $k$-tuple of skew-partitions. Define $\text{SSYT}(\bm{\lambda}/\bm{\mu}) = \text{SSYT}(\lambda^{(1)}/\mu^{(1)})\times\ldots \times \text{SSYT}(\lambda^{(k)}/\mu^{(k)})$ to be $k$-tuples of semi-standard Young tableaux. Given a $k$-tuple of semi-standard fillings $\bm{\sigma}\in  \text{SSYT}(\bm{\lambda}/\bm{\mu})$, $\bm{\sigma} = (\sigma^{1},\ldots,\sigma^{k})$, we draw them so that Young diagrams are aligned along content lines. See Example \ref{exampleLLT}.
\begin{example}
	Let $\bm{\lambda}/\bm{\mu} = ((3,1), (2,2,2)/(1,1,1), (1), (2,1)/(2))$. Below is one possible semi-standard filling of $\bm{\lambda}/\bm{\mu}$. The top row labels the contents of each diagonal line.
	
	\ytableausetup{nosmalltableaux}
	\ytableausetup{nobaseline}
	\begin{center}
	\begin{ytableau}
		\none & \none & \none & \none & \none & \none & \none & \none & \none & \none[-3] & \none[-2] & \none[-1] & \none[0] & \none[1] & \none[2] \\			
		\none & \none & \none & \none & \none & \none & \none & \none &\none[\diag] &\none[\diag] &\none[\diag] &\none[\diag] & \none[\diag] & \none[\diag] \\
		\none & \none & \none & \none & \none & \none & \none &\none[\diag] &\none[\diag] & 3 &\none[\diag] &\none[\diag] & \none[\diag] & \none[\diag] \\
		\none & \none & \none & \none & \none &\none &\none[\diag] &\none[\diag] & \none[\diag] & *(lightgray) & *(lightgray) &\none[\diag] & \none[\diag] \\
		\none & \none & \none & \none &\none &\none[\diag] &\none[\diag] &\none[\diag] &\none[\diag] &\none[\diag] &\none[\diag] &\none[\diag] & \none \\
		\none & \none & \none &\none &\none[\diag] &\none[\diag] &\none[\diag] & *(yellow) 7 &\none[\diag] &\none[\diag] &\none[\diag] &\none \\
		\none & \none &\none &\none[\diag] &\none[\diag] &\none[\diag] &\none[\diag] &\none[\diag] &\none[\diag] &\none[\diag] &\none &\none \\
		\none & \none &\none[\diag] & *(lightgray) & 6 &\none[\diag] &\none[\diag] &\none[\diag] &\none[\diag] &\none &\none & \none \\
		\none & \none[\diag] & \none[\diag] & *(lightgray) & *(green) 4 &\none[\diag] &\none[\diag] &\none[\diag] &\none &\none & \none & \none \\
		\none[\diag] & \none[\diag] & \none[\diag] & *(lightgray) & 1 &\none[\diag] &\none[\diag] &\none &\none & \none & \none & \none \\
		\none[\diag] & \none[\diag] & \none[\diag] &\none[\diag] &\none[\diag] &\none[\diag] &\none &\none & \none & \none & \none & \none \\
		8 &\none[\diag] &\none[\diag] &\none[\diag] &\none[\diag] &\none &\none & \none & \none & \none & \none & \none \\
		*(yellow) 2 & *(green) 4 & 9 &\none[\diag] &\none &\none & \none & \none & \none & \none & \none & \none \\
	\end{ytableau}	
	\end{center}
        The green cells are an example of a coinversion triple with $x=4$, $y=4$, and $z=\infty$. The yellow cells are an example of an inversion triple with $x=0$, $y=2$, and $z=7$.
	\label{exampleLLT}
\end{example}

Define a coinversion triple to be a triple of entries  of the form
\begin{center}
\ytableausetup{nosmalltableaux}
\ytableausetup{nobaseline}
\begin{ytableau}
\none & x & z \\
\none & \none[\diag]  \\
y
\end{ytableau} 
\end{center}
where 
\begin{enumerate}
    \item  $y$ is an entry of $\sigma^{(a)}$ and $x,z$ are entries in $\sigma^{(b)}$,  with $1\le a < b \le k$.
    \item $y$ and $z$ lie along the same content line.
    \item $x\le y \le z$.
\end{enumerate}
While $y$ must be in the Young diagram of $\lambda^{(a)}/\mu^{(a)}$, we let $x=0$ and $z=\infty$ if they are not in the Young diagram of $\lambda^{(b)}/\mu^{(b)}$.

Define the coinversion LLT polynomial by
\begin{equation}
    \mathcal{L}_{\bm{\lambda}/\bm{\mu}}(X_n;t) = \sum_{\bm{\sigma}\in \text{SSYT}(\bm{\lambda}/\bm{\mu})} x^{\bm{\sigma}} t^{\text{coinv}(\bm{\sigma})}
\end{equation}
where the sum is over $k$-tuples of semi-standard Young tableaux with shapes given by $\bm{\lambda}/\bm{\mu}$ and $\text{coinv}(\bm{\sigma})$ is the number of coinversion triples in the filling.

We will also need a `dual' version of the polynomials. Define $\mathcal{\tilde L}$ by
\begin{equation}
\mathcal{\tilde L}_{\bm{\lambda}/\bm{\mu}}(X_n;t) = \sum_{\bm{\sigma}\in \text{SSYT}'(\bm{\lambda}/\bm{\mu})} x^{\bm{\sigma}} t^{\text{inv}(\bm{\sigma})}
\end{equation}
where the sum is over $k$-tuples of tableaux that are weakly increasing up columns and strictly increasing across rows,
and $\text{inv}(\bm{\sigma})$ counts the number of triples of the form
\begin{center}
\ytableausetup{nosmalltableaux}
\ytableausetup{nobaseline}
\begin{ytableau}
\none & x & z \\
\none & \none[\diag]  \\
y
\end{ytableau} 
\end{center}
where 
\begin{enumerate}
    \item  $y$ is an entry of $\sigma^{(a)}$ and $x,z$ are entries in $\sigma^{(b)}$,  with $1\le a < b \le k$.
    \item $y$ and $z$ lie along the same content line.
    \item $x< y < z$.
\end{enumerate}
As for coinversions, $y$ must be in the diagram but we let $x=0$ and $z=\infty$ if they are not in the diagrams. Example \ref{exampleLLT} gives an example of both a coinversion and inversion triple.

\begin{remark}
Note that here our notation differs slightly from that of \cite{LLTaztec}. Our $\mathcal{\tilde L}$ are the same as their $\mathcal{L}^P$. Also, our $\tilde d(\bm{\lambda}, \bm{\mu})$ in the Cauchy identity, Prop. \ref{prop:Cauchy},  is the same as their $d^P(\bm{\lambda}, \bm{\mu})$.
\end{remark}

\noindent The polynomials $\mathcal{L}$ and $\mathcal{\tilde L}$ satisfy the following properties:
\begin{enumerate}
    \item They are symmetric in the $x_i$.
    \item When $k=1$ we have 
    \[
    \begin{aligned}
    \mathcal{L}_{\bm{\lambda}/\bm{\mu}}(X_n;t) = & s_{\lambda^{(1)}/\mu^{(1)}}(X_n)  \\
    \mathcal{\tilde L}_{\bm{\lambda}/\bm{\mu}}(X_n;t) = & s_{(\lambda^{(1)}/\mu^{(1)})'}(X_n).
    \end{aligned}
    \]
    \item When $t=1$ we have
    \[
    \begin{aligned}
    \mathcal{L}_{\bm{\lambda}/\bm{\mu}}(X_n;1) = & \prod_{i=1}^k s_{\lambda^{(i)}/\mu^{(i)}}(X_n) \\
    \mathcal{\tilde L}_{\bm{\lambda}/\bm{\mu}}(X_n;1) = & \prod_{i=1}^k s_{(\lambda^{(i)}/\mu^{(i)})'}(X_n).
    \end{aligned}
    \]
\end{enumerate}
\noindent In addition, we have the following propositions.
\begin{prop}[Branching rule, \cite{GKsuper}] \label{prop:branching}
The $\mathcal{L}$ and $\mathcal{\tilde L}$ satisfy the branching rules
\begin{equation}
\begin{aligned}
    \mathcal{L}_{\bm{\lambda}}(X_n;t) &= \sum_{\bm{\mu}}\mathcal{L}_{\bm{\lambda}/\bm{\mu}}(x_{m+1}\ldots,x_n;t) \mathcal{ L}_{\bm{\mu}}(x_1,\ldots,x_m;t) \\
    \mathcal{\tilde L}_{\bm{\lambda}}(X_n;t) &= \sum_{\bm{\mu}}\mathcal{\tilde L}_{\bm{\lambda}/\bm{\mu}}(x_{m+1}\ldots,x_n;t) \mathcal{\tilde L}_{\bm{\mu}}(x_1,\ldots,x_m;t)
\end{aligned}
\end{equation}
\end{prop}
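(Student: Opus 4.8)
I would prove the two identities in parallel, since they differ only in replacing ``coinversion triple'' and the weak inequality $x\le y\le z$ by ``inversion triple'' and the strict inequality $x<y<z$; below I describe the argument for $\mathcal{L}$. The engine is the classical restriction bijection for semistandard tableaux, carried out simultaneously in all $k$ components, together with the fact that the coinversion statistic is additive under this restriction. First, given $\bm\sigma=(\sigma^{(1)},\dots,\sigma^{(k)})\in\text{SSYT}(\bm\lambda)$ with entries in $\{1,\dots,n\}$, let $\mu^{(i)}$ be the set of cells of $\sigma^{(i)}$ carrying an entry $\le m$. Row-weak and column-strict monotonicity force each $\mu^{(i)}$ to be a Young subdiagram of $\lambda^{(i)}$, so $\bm\mu=(\mu^{(1)},\dots,\mu^{(k)})$ satisfies $\bm\mu\subseteq\bm\lambda$. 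Restricting $\bm\sigma$ to the cells of $\bm\mu$ gives $\bm\tau\in\text{SSYT}(\bm\mu)$ with entries in $\{1,\dots,m\}$, and restricting to the cells of $\bm\lambda/\bm\mu$ gives $\bm\rho\in\text{SSYT}(\bm\lambda/\bm\mu)$ with entries in $\{m+1,\dots,n\}$. The map $\bm\sigma\mapsto(\bm\mu,\bm\tau,\bm\rho)$ is a bijection onto all such triples, with inverse obtained by gluing $\bm\tau$ and $\bm\rho$ together along $\bm\mu$ (column-strictness across the seam holds because every entry of $\bm\tau$ is $\le m$ while every entry of $\bm\rho$ is $>m$). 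This is nothing but the Schur-type branching argument applied component by component, and it yields $x^{\bm\sigma}=x^{\bm\tau}x^{\bm\rho}$ at once; since $\mathcal{L}$ is symmetric, using the alphabet $\{m+1,\dots,n\}$ for $\bm\rho$ computes precisely $\mathcal{L}_{\bm\lambda/\bm\mu}(x_{m+1},\dots,x_n;t)$.

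The substantive step is then to show that, for the glued filling, $\text{coinv}(\bm\sigma)=\text{coinv}(\bm\tau)+\text{coinv}(\bm\rho)$, and the analogous identity for $\text{inv}$. I would prove this triple by triple. A coinversion triple of $\bm\sigma$ has its $y$-cell in some component $a$ and its $x$- and $z$-cells in a component $b>a$; the $y$-cell lies either in $\mu^{(a)}$ or in $\lambda^{(a)}/\mu^{(a)}$, according to whether its entry is $\le m$ or $>m$. I would argue that in the first case the triple restricts to a coinversion triple of $\bm\tau$ and is not one of $\bm\rho$, in the second case it restricts to a coinversion triple of $\bm\rho$ and is not one of $\bm\tau$, and that conversely every coinversion triple of $\bm\tau$ or of $\bm\rho$ lifts to a unique coinversion triple of $\bm\sigma$; summing over $\bm\mu$ and $(\bm\tau,\bm\rho)$ and comparing with the definition of the right-hand side then closes the argument. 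As an alternative to this combinatorial route, one can read the branching rule directly off the colored lattice model realization of $\mathcal{L}$ and $\mathcal{\tilde L}$: the partition function on $n$ rows factors, upon summing over the configuration of the horizontal edges separating row $m$ from row $m+1$, into the product of the partition functions on the bottom $m$ rows and the top $n-m$ rows, with that intermediate edge configuration encoding exactly the data of $\bm\mu$.

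The main obstacle is the statistic bookkeeping in the substantive step: one must run through all the ways the three cells of a coinversion (or inversion) triple can straddle the threshold $m$, and check in each case that the conventions $x=0$ and $z=\infty$ for cells lying outside the relevant (skew) diagram are triggered consistently in $\bm\sigma$ and in whichever of $\bm\tau$, $\bm\rho$ the triple lands in, so that the defining inequality is preserved and no triple is lost or double-counted. Everything else is the standard restriction bijection for tableaux, so this case analysis is where the real work lies.
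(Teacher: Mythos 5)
The paper does not actually prove this proposition: it is imported from \cite{GKsuper}, where it is established via the integrable colored lattice model realization of $\mathcal{L}$ and $\mathcal{\tilde L}$ --- essentially the one-sentence alternative you mention at the end of your second paragraph. So the combinatorial route you propose would be a genuinely different argument, and the question is whether it closes. The restriction bijection $\bm{\sigma}\mapsto(\bm{\mu},\bm{\tau},\bm{\rho})$ and the factorization $x^{\bm{\sigma}}=x^{\bm{\tau}}x^{\bm{\rho}}$ are standard and fine; the entire content of the identity is the step you defer, namely $\text{coinv}(\bm{\sigma})=\text{coinv}(\bm{\tau})+\text{coinv}(\bm{\rho})$, and the triple-by-triple correspondence you assert there is not correct under the conventions as you (and the paper) state them.

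Concretely: take a triple whose $y$-cell lies in $\mu^{(a)}$ (entry $\le m$) and whose $x$- and $z$-cells both lie in $\lambda^{(b)}/\mu^{(b)}$ (entries $>m$). In $\bm{\sigma}$ this is \emph{not} a coinversion triple, since $x>m\ge y$ violates $x\le y$; but for $\bm{\tau}$ (shape $\bm{\mu}$) both cells fall outside $\mu^{(b)}$, the stated convention replaces them by $x=0$, $z=\infty$, and $0\le y\le\infty$ makes it a coinversion triple of $\bm{\tau}$; it contributes nothing to $\bm{\rho}$ because its $y$-cell is not in $\lambda^{(a)}/\mu^{(a)}$. A dual problem occurs for $\bm{\rho}$: a $z$-cell lying in $\mu^{(b)}$ is ``not in the Young diagram of $\lambda^{(b)}/\mu^{(b)}$'' and so gets $z=\infty$, even though the corresponding entry of $\bm{\sigma}$ is $\le m$ and hence \emph{smaller} than $y$. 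One can see on the smallest example $\bm{\lambda}=((1),(1))$, $\bm{\mu}=((0),(1))$ that reading the convention literally gives $\mathcal{L}_{\bm{\lambda}/\bm{\mu}}(x;t)=tx$, whereas Lemma~\ref{lem:tpow} (and the branching rule itself, which forces the cross terms of $\mathcal{L}_{((1),(1))}(x_1,x_2;t)$ to be $(1+t)x_1x_2$) requires $\mathcal{L}_{\bm{\lambda}/\bm{\mu}}(x;t)=x$. So the literal $x=0$, $z=\infty$ rule over-counts for skew shapes, and the clean dichotomy ``the triple lands in exactly one of $\bm{\tau}$, $\bm{\rho}$ with the inequality preserved'' fails. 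Repairing this requires a sharper definition of which triples are counted for a tuple of skew shapes --- which rows of color $b$ are paired with a given cell of color $a$, and how positions in $\mu^{(b)}$ are treated differently from positions beyond the end of a row of $\lambda^{(b)}$ (cf.\ Lemma~\ref{lem:tpow} and Section 4.4 of \cite{LLTaztec}). That bookkeeping is the actual substance of the proposition, not a routine verification, and your proposal asserts its key lemma in a form that is false as stated without supplying the corrected form.
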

\begin{prop}[Cauchy identity, \cite{GKsuper}] \label{prop:Cauchy}
The $\mathcal{L}$ and $\mathcal{\tilde L}$ satisfy the Cauchy identity
\begin{equation}
\begin{aligned}
    &\sum_{\bm{\lambda}} t^{\tilde d(\bm{\lambda},\bm{\nu})} \mathcal{L}_{\bm{\lambda}/\bm{\mu}}(Y_m;t)\mathcal{\tilde L}_{\bm{\lambda}/\bm{\nu}}(X_n;t) \\
    &= \left(\prod_{i,j}\prod_{l=0}^{k-1} (1+x_iy_jt^l) \right) \sum_{\bm{\lambda}} t^{\tilde d(\bm{\mu},\bm{\lambda})} \mathcal{L}_{\bm{\nu}/\bm{\lambda}}(Y_m;t)\mathcal{\tilde L}_{\bm{\bm{\mu}/\lambda}}(X_n;t).
\end{aligned}
\end{equation}
where $\tilde d(\bm{\lambda},\bm{\nu})$ has an explicit formula in terms of the parts of the partitions.

If $\bm{\nu}=\bm{\mu}=\bm{0}$, then this simplifies to
\[
\sum_{\bm{\lambda}} t^{\tilde d(\bm{\lambda},\mathbf{0})} \mathcal{L}_{\bm{\lambda}}(Y_m;t)\mathcal{\tilde L}_{\bm{\lambda}}(X_n;t) =\prod_{i,j}\prod_{l=0}^{k-1} (1+x_iy_jt^l).
\]
\end{prop}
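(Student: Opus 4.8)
Since $\mathcal{L}$ and $\tilde{\mathcal{L}}$ are realized as partition functions of colored integrable vertex models (the point of view of \cite{color1,LLT,CFsuper,GKsuper}), the plan is to prove the identity by a ``train track'' argument: realize both sides as partition functions of a single two-region lattice, and use the Yang--Baxter equation to drag one family of rows past the other, accumulating the scalar $\prod_{i,j}\prod_{l=0}^{k-1}(1+x_iy_jt^l)$ along the way. First I would recall the vertex-model descriptions. The skew polynomial $\mathcal{L}_{\bm{\lambda}/\bm{\mu}}(Y_m;t)$ is the partition function of a colored lattice model with $m$ rows carrying spectral parameters $y_1,\dots,y_m$, with boundary path data on one vertical edge encoding $\bm{\mu}$ and on the other encoding $\bm{\lambda}$; the path colors correspond to the indices $a\in\{1,\dots,k\}$, and the vertex weights are arranged so that a configuration contributes $x^{\bm{\sigma}}t^{\mathrm{coinv}(\bm{\sigma})}$ for the associated tuple of tableaux $\bm{\sigma}$. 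The dual polynomial $\tilde{\mathcal{L}}_{\bm{\lambda}/\bm{\nu}}(X_n;t)$ has an analogous ``conjugate/bosonic'' model, with $n$ rows carrying $x_1,\dots,x_n$, whose single-color $t=1$ specialization produces $s_{(\lambda/\mu)'}$.

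Next I would build the combined lattice by gluing the $\tilde{\mathcal{L}}$-model (rows $x_1,\dots,x_n$) and the $\mathcal{L}$-model (rows $y_1,\dots,y_m$) along an internal vertical seam carrying the data $\bm{\lambda}$; summing over $\bm{\lambda}$ is then summing over the states on this seam, and the factor $t^{\tilde d(\bm{\lambda},\bm{\nu})}$ is exactly the bookkeeping term (counting ``virtual'' color-crossings at the seam) needed to glue the two conventions consistently. With this gluing the outer left boundary carries $\bm{\nu}$ and the outer right boundary carries $\bm{\mu}$, so the partition function of the combined lattice equals the left-hand side of the identity.

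The crux is to drag every $x$-row past every $y$-row using the $R$-matrix that intertwines an $x$-row with a $y$-row (and commutes appropriately with the seam). The key local fact is that pulling a single $x_i$-row across a single $y_j$-row produces the scalar $\prod_{l=0}^{k-1}(1+x_iy_jt^l)$ — the ``rank-one'' Cauchy computation, and precisely the place where the product over $l$ (i.e.\ over colors) enters. After all $nm$ such moves the accumulated scalar is $\prod_{i,j}\prod_{l=0}^{k-1}(1+x_iy_jt^l)$, and the remaining lattice computes $\sum_{\bm{\lambda}}t^{\tilde d(\bm{\mu},\bm{\lambda})}\mathcal{L}_{\bm{\nu}/\bm{\lambda}}(Y_m;t)\tilde{\mathcal{L}}_{\bm{\mu}/\bm{\lambda}}(X_n;t)$, which is the right-hand side. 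The stated special case $\bm{\nu}=\bm{\mu}=\bm{0}$ is then immediate: the skew shapes force the summation variable to be $\bm{0}$ on both sides, $t^{\tilde d(\bm{0},\bm{0})}=1$, and $\mathcal{L}_{\bm{0}}=\tilde{\mathcal{L}}_{\bm{0}}=1$.

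I expect the main obstacle to be this last step: pinning down the correct $R$-matrix, verifying that it satisfies the Yang--Baxter and $RTT$-type relations with the row-transfer operators of both models, and checking that crossing the seam contributes exactly $\prod_{l}(1+x_iy_jt^l)$ with no residual boundary correction — equivalently, verifying that the explicit formula for $\tilde d$ is precisely the normalization that renders the seam-crossing weights scalar. (A more algebraic alternative would be to derive the general skew identity from the straight $\bm{\nu}=\bm{\mu}=\bm{0}$ case, read as a reproducing kernel, together with the branching rule of Proposition~\ref{prop:branching} and the adjointness of skewing by $\bm{\mu}$ to multiplication by $\mathcal{L}_{\bm{\mu}}$; but that route runs into the same difficulty of tracking the $t^{\tilde d}$ factors.)
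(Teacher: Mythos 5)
The paper does not prove this proposition itself; it only cites \cite{GKsuper} and remarks that the properties are proved there ``via integrable vertex models.'' Your train-track/Yang--Baxter outline — realizing $\mathcal{L}$ and $\tilde{\mathcal{L}}$ as partition functions of colored lattice models, gluing along a seam indexed by $\bm{\lambda}$ with $t^{\tilde d}$ as the gluing normalization, and dragging $x$-rows past $y$-rows to accumulate $\prod_{i,j}\prod_{l=0}^{k-1}(1+x_iy_jt^l)$ — is precisely the strategy of that cited proof, so your approach is essentially the same as the paper's (by reference), with the technical obstacles you flag being exactly the content carried out in \cite{GKsuper}.
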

See \cite{GKsuper} for proofs of these properties via integrable vertex models.
\begin{remark}
Note that when $k=1$ this reduces to the dual Cauchy identity for Schur polynomials.
\end{remark}

We will also need the following definitions: The size of a $k$-tuple of partitions is denoted $|\bm{\lambda}|$ and is given by the sum of the sizes of each partition in the tuple. We say that two $k$-tuples of partitions $\bm{\lambda}$ and $\bm{\mu}$ \emph{(co-)interlace} if for each $i=1,\ldots, k$ we have that $\lambda^{(i)}$ and $\mu^{(i)}$ (co-)interlace. We write $\bm{\lambda} \succeq \bm{\mu}$ or $\bm{\lambda} \succeq' \bm{\mu}$, for interlacing and co-interlacing, respectively.

To each $k$-tiling we associate partitions $\bm{\lambda}^{(i)} = (\bm{\lambda}^{(i, 1)}, \dots, \bm{\lambda}^{(i, k)})$, $\bm{\mu}^{(i)} = (\bm{\mu}^{(i, 1)}, \dots, \bm{\mu}^{(i, k)})$, $i= 1, \dots, N$, where $\bm{\lambda}^{(i, a)}$ denotes the Maya diagram along slice $2 i-1$ for color $a$, and  $\bm{\mu}^{(i,a)}$ the Maya diagram along slice $2 i - 2$. In order for a sequence of partitions to correspond to a valid tiling they must satisfy
\[
\bm{0}\preceq\bm{\lambda}^{(1)} \succeq'\bm{\mu}^{(2)}\preceq \ldots \succeq'\bm{\mu}^{(N)}\preceq \bm{\lambda}^{(N)} \succeq' \bm{0}.
\]
See Figure \ref{ex:3til-bijection} for an example.

\begin{prop}[\cite{LLTaztec}]\label{prop:LLTaztec}
In terms of the corresponding Maya diagrams 
\[
\bm{\lambda}^{(1)},\bm{\mu}^{(2)}, \ldots, \bm{\mu}^{(N)}, \bm{\lambda}^{(N)},
\]
the weight of a $k$-tiling of the Aztec diamond of rank $N$ can be written as
\begin{equation} 
\begin{aligned}
         &
        \mathcal{L}_{\bm{\lambda}^{(1)}}(c_1;t) 
        t^{\tilde d(\bm{\lambda}^{(1)},\bm{\mu}^{(2)})} \mathcal{\tilde L}_{\bm{\lambda}^{(1)}/\bm{\mu}^{(2)}}(b_N;t) \\
       &  \times \mathcal{L}_{\bm{\lambda}^{(2)}/\bm{\mu}^{(2)}}(c_2;t) 
         t^{\tilde d(\bm{\lambda}^{(2)},\bm{\mu}^{(3)})} \mathcal{\tilde L}_{\bm{\lambda}^{(2)}/\bm{\mu}^{(3)}}(b_{N-1};t) \\
        & \ldots \\     
        & \times \mathcal{L}_{\bm{\lambda}^{(N)}/\bm{\mu}^{(N)}}(c_N;t) 
        t^{\tilde d(\bm{\lambda}^{(N)},\mathbf{0})}\mathcal{\tilde L}_{\bm{\lambda}^{(N)}}(b_1;t)
    \end{aligned}
\end{equation}
\end{prop}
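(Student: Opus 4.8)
The plan is to reduce the statement to the single-tiling (Schur) weight formula recalled above, together with one combinatorial count. The key preliminary remark is that every coinversion LLT polynomial occurring in the asserted formula is evaluated in a \emph{single} variable. If $\bm{\lambda}\succeq\bm{\mu}$, then in the variable $x_1$ there is exactly one $k$-tuple of semistandard fillings of $\bm{\lambda}/\bm{\mu}$, the all-$1$'s filling $\bm{1}$, and it exists precisely because each $\lambda^{(a)}/\mu^{(a)}$ is a horizontal strip; hence
\[
\mathcal{L}_{\bm{\lambda}/\bm{\mu}}(c;t) = c^{\,|\bm{\lambda}/\bm{\mu}|}\,t^{\,\text{coinv}(\bm{1})}\ \text{ if }\bm{\lambda}\succeq\bm{\mu},\qquad \mathcal{L}_{\bm{\lambda}/\bm{\mu}}(c;t)=0\ \text{ otherwise,}
\]
and symmetrically, since for $\mathcal{\tilde L}$ the fillings are strictly increasing across rows, $\mathcal{\tilde L}_{\bm{\lambda}/\bm{\mu}}(b;t)=b^{\,|\bm{\lambda}/\bm{\mu}|}\,t^{\,\text{inv}(\bm{1})}$ when $\bm{\lambda}\succeq'\bm{\mu}$ and $0$ otherwise. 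Because $|\bm{\lambda}/\bm{\mu}|=\sum_{a}|\lambda^{(a)}/\mu^{(a)}|$ and the conditions $\bm{\lambda}\succeq\bm{\mu}$, $\bm{\lambda}\succeq'\bm{\mu}$ are exactly the per-color interlacing / co-interlacing conditions, the $c_i$- and $b_j$-dependence (and the support) of the claimed formula coincides, color by color, with that of $\prod_{i=1}^{k}\text{wt}(T^{(i)})$ obtained by applying the Schur formula to each color. Hence the entire content of the proposition is the identity
\[
\#\{\text{interactions in }\bm{T}\} \;=\; \sum_{i=1}^{N}\text{coinv}\big(\bm{\lambda}^{(i)}/\bm{\mu}^{(i)}\big)\;+\;\sum_{i=1}^{N}\tilde d\big(\bm{\lambda}^{(i)},\bm{\mu}^{(i+1)}\big)\;+\;\sum_{i=1}^{N}\text{inv}\big(\bm{\lambda}^{(i)}/\bm{\mu}^{(i+1)}\big),
\]
where $\text{coinv},\text{inv}$ are taken in the all-$1$'s fillings and $\bm{\mu}^{(1)}=\bm{\mu}^{(N+1)}=\bm{0}$.

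To establish this identity I would localize both sides to a fixed ordered pair of colors $a<b$ and a fixed pair of adjacent diagonals, and then sum over these. On the tiling side, each interaction is one of the four displayed two-domino pictures involving a color-$a$ and a color-$b$ domino whose occupied squares span two consecutive diagonals — either $2i-2,2i-1$ (arising at an even$\to$odd slice transition) or $2i-1,2i$ (odd$\to$even) — and I would translate each picture into a local condition on the relative positions of the color-$a$ and color-$b$ particles on the two relevant slices, i.e.\ into local data of the Maya diagrams. On the polynomial side, a coinversion triple of the all-$1$'s filling of $\bm{\lambda}^{(i)}/\bm{\mu}^{(i)}$, an inversion triple of $\bm{\lambda}^{(i)}/\bm{\mu}^{(i+1)}$, or one unit of $\tilde d(\bm{\lambda}^{(i)},\bm{\mu}^{(i+1)})$, is likewise local data attached to a cell of color $a$ and cells of a color $b>a$ sharing a content line; and because the relevant skew shapes are horizontal strips (for $\mathcal L$) or vertical strips (for $\mathcal{\tilde L}$), the fillings — hence the triples — are completely determined by the shapes. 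The heart of the argument is then to exhibit, for each pair $a<b$ and each slice, an explicit bijection between the interactions located there and the triples / $\tilde d$-units located there. Correctly matching the coinversion inequality $x\le y\le z$, the inversion inequality $x<y<z$, the boundary conventions $x=0,z=\infty$, and the explicit expression for $\tilde d$ against the four interaction types is the only genuinely technical step; the rest is bookkeeping.

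Two consistency checks, which also guide the bijection and catch off-by-one errors, are worth doing: at $t=1$ the $t=1$ specialization of $\mathcal L,\mathcal{\tilde L}$ collapses the right-hand side to $\prod_{i=1}^{k}\text{wt}(T^{(i)})$, matching $t^{\#\text{interactions}}\prod_i\text{wt}(T^{(i)})$ at $t=1$; and summing the proposed weight over all (co-)interlacing sequences $\bm{\lambda}^{(1)},\bm{\mu}^{(2)},\dots,\bm{\lambda}^{(N)}$ and applying the branching rule (Proposition~\ref{prop:branching}) and the Cauchy identity (Proposition~\ref{prop:Cauchy}) $N$ times reproduces the product~\eqref{eq:kADpartitionfunction} for $Z^{(k)}_{AD}$, as it must. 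Finally, since this proposition is quoted from~\cite{LLTaztec}, an acceptable alternative to the combinatorial route is to invoke the weight-preserving identification there of a $k$-tiling with a configuration of the LLT vertex model and to read the slice (row-transfer-matrix) factorization directly off that model; but the argument above stays within the notation of the present paper.
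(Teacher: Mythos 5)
The paper does not actually prove this proposition: it is imported verbatim from \cite{LLTaztec} (and the only ingredient of it that the paper later needs in refined form, namely how each pair of rows of two colors contributes to the exponent of $t$, is packaged as Lemma~\ref{lem:tpow}, which is again attributed to Section~4.4 of \cite{LLTaztec}). Your proposal is therefore already doing more than the text does, and its overall architecture is sound: the observation that in a single variable each $\mathcal{L}_{\bm{\lambda}/\bm{\mu}}(c;t)$ and $\mathcal{\tilde L}_{\bm{\lambda}/\bm{\mu}}(b;t)$ is a monomial supported exactly on (co-)interlacing pairs is correct and matches Eqn.~(\ref{eqn:monomials}); the identification of the $c_i$- and $b_j$-powers with the single-color Schur weights, color by color, is correct; and the reduction of the whole proposition to the identity equating the number of interactions with $\sum_i\bigl(\text{coinv}(\bm{\lambda}^{(i)}/\bm{\mu}^{(i)})+\tilde d(\bm{\lambda}^{(i)},\bm{\mu}^{(i+1)})+\text{inv}(\bm{\lambda}^{(i)}/\bm{\mu}^{(i+1)})\bigr)$ is exactly the right residual statement, correctly localized to pairs of colors and pairs of adjacent diagonals. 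The one caveat is that you explicitly defer the ``only genuinely technical step,'' namely the case-by-case matching of the four interaction pictures against the coinversion/inversion inequalities, the $x=0$, $z=\infty$ boundary conventions, and the $\tilde d$ term; that matching is precisely the content of Lemma~\ref{lem:tpow} (note in particular that the third and fourth interaction pictures, the mixed horizontal/vertical ones, are the ones absorbed into $t^{\tilde d}\mathcal{\tilde L}$ via Eqn.~(\ref{eq:tVert}), while the two all-horizontal pictures feed Eqn.~(\ref{eq:tHor})), so a complete writeup would either carry out that finite check or, as you note at the end, cite the weight-preserving vertex-model identification of \cite{LLTaztec} directly --- which is in effect what the paper does. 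Your $t=1$ and partition-function consistency checks are both valid and worth keeping.
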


Note that when have only a single variable, the LLT polynomials can be written
\begin{equation}\label{eqn:monomials}
\begin{aligned}
    \mathcal{L}_{\bm{\lambda}/\bm{\mu}}(x;t) &\;= x^{|\bm{\lambda}/\bm{\mu}|} t^{\text{coinv}(\bm{\lambda}/\bm{\mu})}\\
    t^{\tilde d(\bm{\lambda},\bm{\mu})} \mathcal{\tilde L}_{\bm{\lambda}/\bm{\mu}}(y;t) &\;= y^{|\bm{\lambda}/\bm{\mu}|} t^{\text{inv}(\bm{\lambda}/\bm{\mu}) + \tilde d(\bm{\lambda},\bm{\mu})}.
\end{aligned}
\end{equation}
In particular, they are monomials. It will be useful in the proof of Prop. \ref{prop:probs} to know precisely how each partition contributes to the powers of $t$ in each of these monomials.


\begin{lem}\label{lem:tpow}
Fix two colors $b>a$ and consider the $i$-th row of color $a$ in $\bm{\lambda}$. 
\begin{enumerate}
\item There is an interaction between this row and row $j$ of color $b$ whenever
\[
\min(\bm{\lambda}_j^{(b)}-j,\bm{\lambda}_i^{(a)}-i) - \max(\bm{\mu}_j^{(b)}-j,\bm{\mu}_i^{(a)}-i) \ge 0
\]
in which case it contributes 
\begin{equation}\label{eq:tHor}
\begin{aligned}
&\min(\bm{\lambda}_j^{(b)}-j,\bm{\lambda}_i^{(a)}-i) - \max(\bm{\mu}_j^{(b)}-j,\bm{\mu}_i^{(a)}-i) \\
& \qquad \qquad + 
1(\bm{\lambda}_j^{(b)}-j< \bm{\lambda}_i^{(a)}-i )
\end{aligned}
\end{equation}
to the total power of $t$ in $\mathcal{L}_{\bm{\lambda}/\bm{\mu}}(x;t)$.

\item There is an interaction between this row and row $j$ of color $b$ whenever
\begin{equation}\label{eq:tVert}
\bm{\lambda}_j^{(b)}-j = \bm{\mu}_j^{(b)}-j+1 = \bm{\mu}_i^{(a)}-i = \bm{\lambda}_i^{(a)}-i 
\end{equation}
in which case its contributes a single power of $t$ to the total power of $t$ in $t^{\tilde d(\bm{\lambda},\bm{\mu})} \mathcal{\tilde L}_{\bm{\lambda}/\bm{\mu}}(y;t)$.
\end{enumerate}
\end{lem}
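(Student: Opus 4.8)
The plan is to work directly from the single-variable formulas in Eqn.~(\ref{eqn:monomials}), which tell us that $\mathcal{L}_{\bm\lambda/\bm\mu}(x;t)$ contributes a power of $t$ equal to $\mathrm{coinv}(\bm\lambda/\bm\mu)$, while $t^{\tilde d(\bm\lambda,\bm\mu)}\mathcal{\tilde L}_{\bm\lambda/\bm\mu}(y;t)$ contributes $\mathrm{inv}(\bm\lambda/\bm\mu)+\tilde d(\bm\lambda,\bm\mu)$. Since in the single-variable case every skew shape $\lambda^{(a)}/\mu^{(a)}$ is a horizontal strip (it must have at most one cell per column), each SSYT is filled entirely with $1$'s, so there is a unique filling and the $t$-power is a pure count of coinversion (resp.\ inversion) triples over pairs of colors $a<b$. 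The first task is therefore to set up coordinates: a horizontal domino of color $a$ on the relevant diagonals, sitting between slices $2i-2$ and $2i-1$, corresponds to a cell in row $i$ of $\lambda^{(a)}/\mu^{(a)}$, and via the Maya-diagram dictionary $x_i=\lambda_i-i+\tfrac12$ the cells occupied in row $i$ are exactly the columns with content running over the integer interval $[\bm\mu_i^{(a)}-i,\ \bm\lambda_i^{(a)}-i-1]$ (half-integer shifts absorbed). I would state this translation cleanly as a preliminary observation.

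Next I would analyze part (1). Fix colors $b>a$, row $i$ of color $a$, row $j$ of color $b$. A coinversion triple with the $y$-entry in row $i$ of color $a$ and $x,z$ in color $b$ has $z$ on the same content line as $y$; because the fillings are all $1$'s, the inequality $x\le y\le z$ is automatic whenever the cells exist, and the boundary convention $x=0$, $z=\infty$ when absent means the triple still counts as a coinversion exactly when $y$'s cell lies in the content range of a row of color $b$ that is weakly ``to the left'' in the sense dictated by the $\diag$-offset. Concretely, a cell of row $i$ color $a$ at content $c$ pairs with row $j$ color $b$ precisely when either (i) color $b$ also has a cell at content $c$ in row $j$, contributing via the $x\le y\le z$ pattern, or (ii) the relevant neighbor is outside color $b$'s diagram but the content still falls in the ``shadow'' of row $j$, handled by $z=\infty$. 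Counting, over all cells $c$ in row $i$'s range, the number of rows $j$ of color $b$ that produce a triple, and swapping the order of summation to instead count, for each fixed $j$, how many cells $c$ of row $i$ produce a triple with row $j$, yields exactly the length of the overlap interval $[\max(\bm\mu_j^{(b)}-j,\bm\mu_i^{(a)}-i),\ \min(\bm\lambda_j^{(b)}-j,\bm\lambda_i^{(a)}-i)-1]$ plus a correction of $+1$ in the case the topmost cell of color $b$'s row $j$ lies strictly above, which is the indicator $1(\bm\lambda_j^{(b)}-j<\bm\lambda_i^{(a)}-i)$. This produces formula~(\ref{eq:tHor}), and the condition for the overlap to be nonempty is precisely the displayed inequality. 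The identification of these overlapping horizontal dominoes with the four interaction pictures in the definition of the model is the geometric bookkeeping that closes part (1).

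For part (2), the dual polynomial $\mathcal{\tilde L}$ counts \emph{inversion} triples, which use the strict inequalities $x<y<z$; since the cotransposed shapes are vertical strips in the single-variable specialization, a vertical domino of color $a$ occupies a single cell, and an inversion triple forces $x,y,z$ all equal to $1$ to be impossible unless the cells are arranged so that the convention $x=0$, $z=\infty$ applies — this pins the configuration down to the rigid equality~(\ref{eq:tVert}), namely that row $j$ of color $b$ has a single cell exactly at the content line where row $i$ of color $a$ ends, with $\bm\mu_j^{(b)}-j = \bm\lambda_j^{(b)}-j-1$. In that case exactly one triple is created, contributing a single power of $t$; the extra shift $\tilde d(\bm\lambda,\bm\mu)$ in Eqn.~(\ref{eqn:monomials}) must be checked to contribute nothing extra in this local count, i.e.\ $\tilde d$ is chosen so that its contribution is already absorbed — I would verify this from the explicit formula for $\tilde d$ referenced after Prop.~\ref{prop:Cauchy}. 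Matching (\ref{eq:tVert}) against the vertical-domino interaction picture (the last of the four) completes part (2).

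The main obstacle I anticipate is the careful handling of the boundary conventions ($x=0$, $z=\infty$ for cells outside the diagram) together with the half-integer content shifts coming from the Maya-diagram normalization: getting the off-by-one terms exactly right — in particular justifying precisely when the indicator $1(\bm\lambda_j^{(b)}-j<\bm\lambda_i^{(a)}-i)$ contributes the extra $+1$ in part (1), and confirming that no analogous correction is needed in part (2) beyond the single power of $t$ — is where the proof is delicate. Everything else is a routine translation between the domino picture, the Maya diagram, and the coinversion/inversion combinatorics of the $\diag$-shaped triples.
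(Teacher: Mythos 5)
The paper does not actually prove this lemma; it defers entirely to the discussion in Section 4.4 of \cite{LLTaztec}, so your attempt at a self-contained derivation goes beyond what the text does. Your strategy for part (1) --- specialize to one variable so that each constituent shape is a horizontal strip filled with $1$'s, then count triples row-pair by row-pair --- is the right one, and the formula you write down (length of the overlap of the content intervals, plus the indicator) is correct. But the step that carries all the content is missing. With the convention as literally stated ($x=0$ and $z=\infty$ whenever the cell is absent), \emph{every} triple with all entries equal to $1$ satisfies $x\le y\le z$, so every (cell of color $a$, row of color $b$) pair would count and the answer would be wrong. One must interpret the convention directionally --- a missing cell lying to the left of the skew row gets the value $0$, one lying to the right gets $\infty$ --- and then run the case analysis over the relative positions of the four endpoints $\bm{\lambda}_i^{(a)}-i$, $\bm{\mu}_i^{(a)}-i$, $\bm{\lambda}_j^{(b)}-j$, $\bm{\mu}_j^{(b)}-j$, treating separately the boundary triples where exactly one of $x,z$ straddles an end of row $j$. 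That analysis is exactly where the $\min-\max$ term and the indicator $1(\bm{\lambda}_j^{(b)}-j<\bm{\lambda}_i^{(a)}-i)$ come from; you name it as the delicate point but do not carry it out, so part (1) is a plausible answer with no derivation.

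Part (2) has a more serious problem: your proposed mechanism cannot produce the stated condition. Equation \eqref{eq:tVert} forces $\bm{\lambda}_i^{(a)}=\bm{\mu}_i^{(a)}$, i.e.\ row $i$ of color $a$ contributes \emph{no} cell to the skew shape, so there is no entry $y$ in that row and hence no inversion triple of the form you describe attached to it. Worse, in the one-variable case an inversion triple needs $x<1<z$, hence $x=0$ \emph{and} $z=\infty$ simultaneously; since $x$ and $z$ occupy adjacent positions in row $j$ of color $b$, this forces that row of the skew shape to be empty --- the opposite of \eqref{eq:tVert}, which requires row $j$ of color $b$ to contain exactly one cell. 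So the single power of $t$ in part (2) is not accounted for by $\mathrm{inv}$ at all; it must come from the exponent $\tilde d(\bm{\lambda},\bm{\mu})$ in $t^{\tilde d(\bm{\lambda},\bm{\mu})}\mathcal{\tilde L}_{\bm{\lambda}/\bm{\mu}}(y;t)$ (possibly after cancellation with the genuine inversion triples just described). The explicit formula for $\tilde d$ is never given in this paper, so the verification you defer to it cannot be completed here; it genuinely requires importing that statistic from \cite{LLTaztec}, which is precisely why the authors cite that reference instead of arguing directly.
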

\begin{proof}
This follows from the discussion in Section 4.4 of \cite{LLTaztec}.
\end{proof}

As with the single tiling of the Aztec diamond, we can associate to each $k$-tiling a particle configuration. In this case, we have particles of $k$ colors, one for each tiling. For each color, the particles must satisfy the required interlacing conditions (\ref{eq:particleInterlace}) and (\ref{eq:particleBounds}). Again, see Figure \ref{ex:3til-bijection} for an example.

\section{Markov chains on colored interlacing arrays}\label{sec:Markov}

In this section we define a Markov chain on colored interlacing particle arrays which will be equivalent to the shuffling algorithm under the identification between particle arrays and $k$-tilings described in Section \ref{sec:background}. The essential ingredients are the Cauchy identities and branching rule for LLT polynomials. Using these, we apply a construction whose original form was introduced in \cite{DiaconisFill1990}, and which was further developed in the case of random tilings in \cite{Ferrari2008, borodin2015random}. We first elaborate on the (well-known) construction in the one color Schur case, and then describe the corresponding generalization to the LLT case.

\subsection{Markov Chains on Schur Processes}\label{sec:MarkovSchur}

Recall that under the bijection to interlacing partitions, the probability measure on domino tilings becomes
$$\frac{1}{Z} s_{\lambda^{(1)}}(c_1) s_{(\lambda^{(1)}/\mu^{(2)})'}(b_N) s_{\lambda^{(2)}/\mu^{(2)}}(c_2) \ldots s_{\lambda^{(N)}/\mu^{(N)}}(c_N) s_{(\lambda^{(N)})'}(b_1) \;\;.$$
As mentioned in Remark \ref{rmk:schur}, this is an example of a Schur process. Now we introduce transition kernels which are building blocks for Markov chains that map Schur processes to other Schur processes. For generalizations of this construction and more details, see \cite{BorodinGorinSPB12, borodin2015random} and references therein.

Suppose that $b, c_1,c_2,\dots$ are positive real numbers, and denote the tuple~$c_{[1,k]} = (c_1,\dots, c_k)$. Define transition probabilities by
\[
\begin{aligned}
p^{\uparrow}_{\lambda\to\mu}(c_{[1,k]}|b) =& \frac{1}{Z(c_{[1,k]}|b)}\frac{s_{\mu}(c_{[1,k]})}{s_{\lambda}(c_{[1,k]})} s_{\mu'/\lambda'}(b) \\
p^{\downarrow}_{\lambda\to\mu}(c_{[1,k-1]}|c_k) =& \frac{s_{\mu}(c_{[1,k-1]})}{s_{\lambda}(c_{[1,k]})} s_{\lambda/\mu}(c_k) 
\end{aligned}
\]
where 
$$Z(c_{[1,k]}|b) \coloneqq \prod_{i=1}^k (1 + b c_i)$$ 
which can be computed by the Cauchy identity.

We have
\[
\begin{aligned}
\sum_{\mu} p^{\uparrow}_{\lambda\to\mu}(c_{[1,k]}|b) = & 1 \\
\sum_{\mu} p^{\downarrow}_{\lambda\to\mu}(c_{[1,k-1]}|c_k) = & 1 
\end{aligned}
\]
where the second equality follows from the branching rule. Note that $p^{\uparrow}_{\lambda\to\mu}$ is nonzero only if $\lambda\subset \mu$ and we view  this as the partition growing from $\lambda$ to $\mu$. Similarly, $p^{\downarrow}_{\lambda\to\mu}$ is nonzero only if $\mu \subset \lambda$ and we view this as the partition shrinking.

A key property of the transition kernels $p^{\uparrow}, p^{\downarrow}$ is their commutation:
\begin{equation}
\label{eqn:pcom}
\begin{aligned}
&\sum_{\mu} p^{\uparrow}_{\lambda\to\mu}(c_{[1,k]}|b) p^{\downarrow}_{\mu\to\nu}(c_{[1,k-1]}|c_{k}) \\
& \qquad \qquad \qquad =
\sum_{\mu} p^{\downarrow}_{\lambda\to\mu}(c_{[1,k-1]}|c_k) p^{\uparrow}_{\mu\to\nu}(c_{[1,k-1]}|b)\;\;.
\end{aligned}
\end{equation}
This property also follows from the skew-Cauchy and branching identities.

Using these, we will define transition probabilities out of which our Markov chain will be built. Now, let $c_1,\dots, c_N, b_1,\dots, b_N$ be positive real numbers. Define $c_{[i, j]} \coloneqq (c_i,\dots, c_j)$. Given partitions $\lambda$ and $\nu$, define the transition probabilities to a new partition $\tilde{\lambda}$ by 
\[
\tilde{P}_k(\tilde{\lambda} | \lambda,\nu) = \frac{ p^{\uparrow}_{\lambda\to \tilde{\lambda}}(c_{[1,k]} |b_{N-k+2}) p^{\downarrow}_{\tilde{\lambda} \to\nu}(c_{[1,k-1]}|c_{k})}{ \sum_{\mu} p^{\uparrow}_{\lambda\to\mu}(c_{[1,k]}|b_{N-k+2}) p^{\downarrow}_{\mu\to\nu}(c_{[1,k-1]}|c_{k})}.
\]
With this we can describe a Markov chain acting on the set of sequences of partitions $$\emptyset \preceq \lambda^{(1)} \succeq' \mu^{(2)} \preceq \lambda^{(2)} \succeq' \cdots \preceq \lambda^{(N)} \succeq' \mu^{(N+1)} \preceq \lambda^{(N+1)} \succeq'  \emptyset \;\;.$$
\begin{definition}[Schur parallel update~\cite{borodin2015random}]
Given a sequence of partitions interlaced as above, the transition probabilities for updates $\mu^{(k)} \rightarrow \tilde{\mu}^{(k)}$ and $\lambda^{(k)} \rightarrow \tilde{\lambda}^{(k)}$ are defined as follows:
\begin{enumerate}
    \item For each $k  =2, \dots, N+1$, set $\tilde{\mu}^{(k)} = \lambda^{(k-1)}$.
    \item For each $k  =1, \dots, N+1$, update $\lambda^{(k)} \rightarrow \tilde{\lambda}^{(k)}$ with transition probabilities 
    $$\tilde{P}_k(\tilde{\lambda}^{(k)} | \lambda^{(k)},\tilde{\mu}^{(k)})\;\;.$$
\end{enumerate}
\end{definition}

Now we note that we may write the Schur process corresponding to a rank-$N$ domino tiling of the Aztec diamond as 
\begin{equation}\label{eq:SchurProcAlt}
\begin{aligned}
 &\frac{1}{Z} s_{\lambda^{(1)}}(c_1) s_{(\lambda^{(1)}/\mu^{(2)})'}(b_N) s_{\lambda^{(2)}/\mu^{(2)}}(c_2)\ldots s_{\lambda^{(N)}/\mu^{(N)}}(c_N)s_{(\lambda^{(N)})'}(b_1) =  \\
&p^{\uparrow}_{\emptyset \rightarrow \lambda^{(N)}}(c_{[1,N]} | b_1) p^{\downarrow}_{\lambda^{(N)} \rightarrow \mu^{(N)}}(c_{[1,N-1]}|c_N)  p^{\uparrow}_{\mu^{(N)} \rightarrow \lambda^{(N-1)}}(c_{[1,N-1]} | b_{2}) \; \times \cdots \\
&\qquad \qquad \cdots \times  p^{\downarrow}_{\lambda^{(2)} \rightarrow \mu^{(2)}}(c_{1}|c_2)  p^{\uparrow}_{\mu^{(2)} \rightarrow \lambda^{(1)}}(c_{1} | b_{N}) 
\end{aligned}
\end{equation}
This form is useful in the following proposition. 

\begin{prop}[\cite{borodin2015random}]\label{prop:Schurcase}
Suppose $\lambda^{(1)},\mu^{(2)}, \lambda^{(2)}, \ldots,\mu^{(N)},\lambda^{(N)}$ are sampled from the Schur process
\begin{align*}
 \frac{1}{Z} s_{\lambda^{(1)}}(c_1) s_{(\lambda^{(1)}/\mu^{(2)})'}(b_N) s_{\lambda^{(2)}/\mu^{(2)}}(c_2)\ldots s_{\lambda^{(N)}/\mu^{(N)}}(c_N)s_{(\lambda^{(N)})'}(b_1)
\end{align*}
and suppose $\mu^{(N+1)} = \lambda^{(N+1)} = \emptyset$ deterministically.

Then after the Schur parallel update, the updated partitions
$$\tilde{\lambda}^{(1)},\tilde{\mu}^{(2)}, \tilde{\lambda}^{(2)}, \ldots,\tilde{\mu}^{(N)},\tilde{\lambda}^{(N)},\tilde{\mu}^{(N+1)},\tilde{\lambda}^{(N+1)}$$
are distributed according to the Schur process 
\begin{align*}
&\frac{1}{Z} s_{\tilde{\lambda}^{(1)}}(c_1) s_{(\tilde{\lambda}^{(1)}/\tilde{\mu}^{(2)})'}(b_{N+1}) \times\cdots \\
&\qquad\qquad\cdots \times s_{(\tilde{\lambda}^{(N)}/\tilde{\mu}^{(N+1)})'}(b_2) s_{\tilde{\lambda}^{(N+1)}/\tilde{\mu}^{(N+1)}}(c_{N+1}) s_{(\tilde{\lambda}^{(N+1)})'}(b_1) \;\;.
\end{align*}
\end{prop}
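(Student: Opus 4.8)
The plan is to verify that the Schur parallel update intertwines the two Schur processes by tracking how the transition kernels act on the factorized form \eqref{eq:SchurProcAlt}. First I would write the input measure in the form given by \eqref{eq:SchurProcAlt}, namely as the product $p^{\uparrow}_{\emptyset\to\lambda^{(N)}}(c_{[1,N]}|b_1)\,p^{\downarrow}_{\lambda^{(N)}\to\mu^{(N)}}(c_{[1,N-1]}|c_N)\cdots p^{\uparrow}_{\mu^{(2)}\to\lambda^{(1)}}(c_1|b_N)$. The key structural observation is that applying the Schur parallel update amounts to precomposing this chain with one more pair of kernels: since $\tilde{\mu}^{(k)}=\lambda^{(k-1)}$ for $k=2,\dots,N+1$ (and $\tilde{\mu}^{(1)}$ is whatever $\emptyset$ plays, with $\tilde{\lambda}^{(0)}$ conventionally empty), the update reads off the old $\lambda$-array as the new $\mu$-array and then resamples each $\tilde{\lambda}^{(k)}$ from $\tilde{P}_k(\,\cdot\mid \lambda^{(k)},\tilde{\mu}^{(k)})$.

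Next I would carry out the telescoping. The joint law of the old and new partitions factors as [input measure]$\times\prod_k \tilde P_k(\tilde\lambda^{(k)}\mid\lambda^{(k)},\tilde\mu^{(k)})$, and each $\tilde P_k$ by definition is $p^{\uparrow}_{\lambda^{(k)}\to\tilde\lambda^{(k)}}(c_{[1,k]}|b_{N-k+2})\,p^{\downarrow}_{\tilde\lambda^{(k)}\to\nu}(c_{[1,k-1]}|c_k)$ normalized by the $\mu$-sum. The normalizing denominators are exactly the quantities controlled by the commutation relation \eqref{eqn:pcom}: the sum $\sum_{\mu}p^{\uparrow}_{\lambda\to\mu}p^{\downarrow}_{\mu\to\nu}$ equals $\sum_{\mu}p^{\downarrow}_{\lambda\to\mu}p^{\uparrow}_{\mu\to\nu}$, and one can identify it explicitly with a ratio of Schur polynomials times $Z$-factors using the skew-Cauchy identity. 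I would then push the $p^{\downarrow}$ appearing in $\tilde P_k$ past the $p^{\uparrow}$ of the original chain using \eqref{eqn:pcom}, which swaps the order of ``grow then shrink'' into ``shrink then grow'' and lets the freshly-introduced kernels merge with the old array one level at a time, marginalizing out the old $\lambda^{(k)}$'s. After the dust settles, the surviving kernels reassemble — again via \eqref{eq:SchurProcAlt} read in the rank-$(N+1)$ case — into $p^{\uparrow}_{\emptyset\to\tilde\lambda^{(N+1)}}(c_{[1,N+1]}|b_1)\,p^{\downarrow}_{\tilde\lambda^{(N+1)}\to\tilde\mu^{(N+1)}}(c_{[1,N]}|c_{N+1})\cdots$, i.e. the claimed rank-$(N+1)$ Schur process, with the variable shifts $b_j\mapsto b_{j}$ reindexed and $c_{N+1}$ newly appearing exactly as the update rule dictates.

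The main obstacle I anticipate is bookkeeping: correctly matching the spectral parameters $c_{[1,k]}$, $b_{N-k+2}$, $c_k$ across the two arrays so that each application of the commutation relation \eqref{eqn:pcom} is legal (the relation requires the same $c$-tuples on both sides), and checking that the cascade of normalizations telescopes cleanly to a single global $1/Z$ rather than leaving spurious ratios of Schur polynomials at the boundary. In particular the endpoints — the $\emptyset$ at the top and the forced $\mu^{(N+1)}=\lambda^{(N+1)}=\emptyset$ at the bottom before the update — need care, since that is where the new level $k=N+1$ gets inserted and where the induction ``starts.'' Since this is the Schur ($k=1$) case, all of the needed identities (branching, skew-Cauchy, and the derived commutation \eqref{eqn:pcom}) are already available in Proposition \ref{prop:basicSchur} and the surrounding discussion, so no new analytic input is required — the proof is a finite induction on the number of levels, or equivalently a direct ``zipper'' argument moving the extra kernel pair from one end of the chain to the other.
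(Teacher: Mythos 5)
Your proposal is correct and follows essentially the same route as the paper: write both measures in the factorized kernel form \eqref{eq:SchurProcAlt}, observe that the numerators of the $\tilde P_k$ pull out of the sum and assemble into the rank-$(N{+}1)$ process, and cancel each denominator against the sum over the old $\mu^{(k)}$ via the commutation relation \eqref{eqn:pcom}. One small slip in wording: it is the old $\mu^{(k)}$'s that get marginalized, not the old $\lambda^{(k)}$'s (those are identified with the new $\tilde{\mu}^{(k+1)}$'s, as you correctly note earlier), but this does not affect the argument.
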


\begin{remark}
    This proposition is a special case of well-known facts (again see e.g. \cite{BorodinGorinSPB12, borodin2015random}), but we include a proof for clarity of exposition, and in particular because this proof generalizes immediately to the multi-colored case.
\end{remark}

\begin{proof}
Using Eqn. (\ref{eq:SchurProcAlt}), we must show 
\begin{align*}
 &\sum_{\{\mu^{(i)}\}} p^{\uparrow}_{\emptyset \rightarrow \lambda^{(N)}}(c_{[1,N]} | b_1) p^{\downarrow}_{\lambda^{(N)} \rightarrow \mu^{(N)}}(c_1,\dots, c_{N-1}|c_N) \times \cdots \\
 & \qquad \cdots \times  p^{\downarrow}_{\lambda^{(2)} \rightarrow \mu^{(2)}}(c_{1}|c_2)  p^{\uparrow}_{\mu^{(2)} \rightarrow \lambda^{(1)}}(c_{1} | b_{N})  \prod_{n=1}^{N+1} \tilde{P}_n(\tilde{\lambda}^{(n)} | \lambda^{(n)},\tilde{\mu}^{(n)}) =\\
 &
 p^{\uparrow}_{\emptyset \rightarrow \tilde{\lambda}^{(N+1)}}(c_{[1,N+1]} | b_1) p^{\downarrow}_{\tilde{\lambda}^{(N+1)} \rightarrow \tilde{\mu}^{(N+1)}}(c_{[1,N]}|c_{N+1})  p^{\uparrow}_{\tilde{\mu}^{(N+1)} \rightarrow \tilde{\lambda}^{(N)}}(c_{[1,N]} | b_{2}) \; \times \cdots \\
& \qquad \cdots \times  p^{\downarrow}_{\tilde{\lambda}^{(2)} \rightarrow \tilde{\mu}^{(2)}}(c_{1}|c_2)  p^{\uparrow}_{\tilde{\mu}^{(2)} \rightarrow \tilde{\lambda}^{(1)}}(c_{1} | b_{N+1}) \;\;.
\end{align*}

Indeed, first note that since $\tilde{\mu}^{(i)}=\lambda^{(i-1)}$ deterministically, we are not summing over the $\lambda^{(i)}$. In particular, the numerators of the transition probability  $\prod_{n=1}^{N+1} \tilde{P}_n(\tilde{\lambda}^{(n)} | \lambda^{(n)},\tilde{\mu}^{(n)})$
are 
$$\prod_{n=1}^{N+1} p^{\uparrow}_{\lambda^{(n)} \to \tilde{\lambda}^{(n)}}(c_{[1,n]} |b_{N-n+2}) p^{\downarrow}_{\tilde{\lambda}^{(n)} \to \tilde{\mu}^{(n)}}(c_{[1,n-1]}|c_{n})$$
and thus can be pulled out of the sum. Moreover, this product is exactly the Schur process we desire. We are left to show that the denominator of the transition probabilities cancels with what remains in the numerator.

Consider the sum over $\mu^{(2)}$. The relevant term is
$$\sum_{\mu^{(2)}} p^{\downarrow}_{\lambda^{(2)} \rightarrow \mu^{(2)}}(c_{1}|c_2)  p^{\uparrow}_{\mu^{(2)} \rightarrow \lambda^{(1)}}(c_{1} | b_{N}). $$
Using the commutation relation (\ref{eqn:pcom}) we have
$$
\sum_{\mu^{(2)}} p^{\downarrow}_{\lambda^{(2)} \rightarrow \mu^{(2)}}(c_{1}|c_2)  p^{\uparrow}_{\mu^{(2)} \rightarrow \lambda^{(1)}}(c_{1} | b_{N}) = 
\sum_{\kappa} p^{\uparrow}_{\lambda^{(2)} \to\kappa}(c_{[1,2]}|b_{N}) p^{\downarrow}_{\kappa\to\lambda^{(1)}}(c_{1}|c_{2}).
$$
This is precisely the denominator of $\tilde{P}_2$ and we see that the terms cancel. 

More generally, for $k = 2, \dots, N+1$, one can see that the sum over $\mu^{(k)}$ cancels with the denominator of $\tilde{P}_k$ through the commutation relation
\begin{align*}
   & \sum_{\mu^{(k)}} p^{\downarrow}_{\lambda^{(k)} \rightarrow \mu^{(k)}}(c_1,\dots, c_{k-1} | c_k) p^{\uparrow}_{\mu^{(k)} \rightarrow \lambda^{(k-1)}}(c_1,\dots, c_{k-1} | b_{N-k+2}) \\
    &= \sum_{\kappa} p^{\uparrow}_{\lambda^{(k)} \rightarrow \kappa}(c_1,\dots, c_{k-1},c_k | b_{N-k+2}) p^{\downarrow}_{\kappa \rightarrow \lambda^{(k-1)}} (c_1,\dots, c_{k-1} | c_k) \;\;.
\end{align*}

\end{proof}

 Computing the transition probabilities in terms of particle positions 
 \[
 x^{(n)}, \tilde{x}^{(n)}, y^{(n)}, \tilde{y}^{(n)} \qquad \text{corresponding to} \qquad  \lambda^{(n)},\tilde{\lambda}^{(n)}, \mu^{(n)}, \tilde{\mu}^{(n)},
 \]
 respectively, one observes that we can define the Schur parallel update in terms of the particles as follows:
\begin{enumerate}
    \item For each $n =2, \dots, N+1$, set $\tilde{y}^{(n)} = x^{(n-1)}$.
    \item For each $n  =1, \dots, N+1$, update $x^{(n)} \rightarrow \tilde{x}^{(n)}$ according to the rules: 
    \begin{itemize}
    \item If either $\tilde{x}_i^{(n)} = x_i^{(n)}$ or $\tilde{x}_i^{(n)} = x_i^{(n)}+1$ is forced in order to preserve interlacing with $\tilde{y}^{(n)}$, then transition accordingly with probability $1$. 
    \item Otherwise, $$ 
   \tilde{x}^{(n)}_i = 
   \begin{cases}
   x^{(n)}_i + 1 & 
    \text{ with probability } \frac{c_k b_{N-k+2}}{1 + c_k b_{N-k+2}} \\
   x^{(n)}_i  & \text{ otherwise }
   \end{cases}.
   $$
   \end{itemize}
   These transitions are independent for~$1 \leq i \leq n \leq N+1$.
\end{enumerate}

\begin{remark}
That the Markov chain has this form follows from the discussion of the Aztec diamond shuffling algorithm in \cite{borodin2015random}. It will also follow from our generalization to multiple colors in the next section.
\end{remark}

Now we have the following proposition, which follows directly from Proposition~\ref{prop:Schurcase}.

\begin{prop}
Let $x^{(1)}, y^{(1)}, \dots, x^{(N-1)}, y^{(N)}, x^{(N)}, x^{(N+1)}, y^{(N+1)}$ be a random array of interlaced particles such that $x^{(1)}, y^{(1)}, \dots, x^{(N-1)}, y^{(N)}, x^{(N)}$ are sampled according to the Schur process
$$\frac{1}{Z} s_{\lambda^{(1)}}(c_1) s_{(\lambda^{(1)}/\mu^{(2)})'}(b_N) s_{\lambda^{(2)}/\mu^{(2)}}(c_2)\ldots s_{\lambda^{(N)}/\mu^{(N)}}(c_N)s_{(\lambda^{(N)})'}(b_1)$$
and  
\begin{align*}
x^{(N+1)} &= (-N-\frac{1}{2}, -N+\frac{1}{2}, \dots, -\frac{1}{2}) \\
y^{(N+1)} &= (-N+\frac{1}{2}, -N+\frac{3}{2}, \dots, -\frac{1}{2})
\end{align*}
deterministically. Then after the Schur parallel update, the particles $\tilde{x}^{(n)}$, $\tilde{y}^{(n)}$, $n= 1,\dots, N+1$, are distributed according to 
\[
\begin{aligned}
&\frac{1}{Z} s_{\tilde{\lambda}^{(1)}}(c_1) s_{(\tilde{\lambda}^{(1)}/\tilde{\mu}^{(2)})'}(b_{N+1}) \times \ldots \\ 
&\qquad \qquad \ldots \times s_{(\tilde{\lambda}^{(N)}/\tilde{\mu}^{(N+1)})'}(b_2) s_{\tilde{\lambda}^{(N+1)}/\tilde{\mu}^{(N+1)}}(c_{N+1}) s_{(\tilde{\lambda}^{(N+1)})'}(b_1) \;\;.
\end{aligned}
\]
\end{prop}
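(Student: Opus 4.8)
The plan is to transport Proposition~\ref{prop:Schurcase}, which is phrased at the level of interlacing partition sequences, to the level of interlacing particle arrays, using the partition--particle correspondence recalled in Section~\ref{subsubsec:particle_arrays} and the surrounding discussion: a partition $\nu$ with (at most) $\ell$ parts is sent to the positions $x_i = \nu_i - i + \tfrac12$, $1 \le i \le \ell$; the relations $\lambda \succeq \mu$ and $\lambda \succeq' \mu$ become the inequalities of Eqn.~(\ref{eq:particleInterlace}); and, by the very definition of the displayed measure, the distribution on particle arrays in the statement is the pushforward of the corresponding Schur process on partition sequences. Under this dictionary it then suffices to check (i) that the deterministic $x^{(N+1)}, y^{(N+1)}$ correspond to $\lambda^{(N+1)} = \mu^{(N+1)} = \emptyset$, and (ii) that the bulleted particle dynamics stated just above the proposition is the image of the Schur parallel update.

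Point (i) is a short computation: the empty partitions $\lambda^{(N+1)} = \emptyset$ and $\mu^{(N+1)} = \emptyset$, carrying $N+1$ and $N$ particles at level $N+1$, have positions $\{-\tfrac12, -\tfrac32, \dots, -N-\tfrac12\}$ and $\{-\tfrac12, -\tfrac32, \dots, -N+\tfrac12\}$ respectively, exactly as in the statement, which matches the hypothesis of Proposition~\ref{prop:Schurcase}. Also the relabeling of $b$-indices occurring there (the level-$i$ co-transition parameter changing from $b_{N-i+1}$ to $b_{(N+1)-i+1}$, with new parameters $b_{N+1}$ at level $1$ and $c_{N+1}$ at level $N+1$) and the enlargement of the bounds in Eqn.~(\ref{eq:particleBounds}) from rank $N$ to rank $N+1$ are precisely the ones recorded in the statement; the forced moves in the bulleted rules are exactly those needed to keep the updated array inside the rank-$(N+1)$ window.

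Point (ii) is the substantive step, and the one I expect to require the most care, although it is essentially the computation already indicated in the paragraph preceding the proposition. One uses that each $p^{\uparrow}$ and $p^{\downarrow}$ involves only single-variable (skew) Schur polynomials, so $s_{\sigma/\tau}(z) = z^{|\sigma/\tau|}$ when $\sigma/\tau$ is a horizontal strip and $0$ otherwise, and $Z(c_{[1,n]}\mid b) = \prod_i (1 + b c_i)$ by the single-variable specialization of Theorem~\ref{thm:ADPF}. Substituting these monomials into $\tilde P_n(\tilde\lambda^{(n)} \mid \lambda^{(n)}, \tilde\mu^{(n)})$ and using $\tilde\mu^{(n)} = \lambda^{(n-1)}$, the kernel factors over rows $i = 1, \dots, n$: conditioned on interlacing with $\tilde\mu^{(n)}$, each coordinate $x^{(n)}_i$ is either forced to $x^{(n)}_i$ or to $x^{(n)}_i + 1$, or else moves $x^{(n)}_i \mapsto x^{(n)}_i + 1$ independently with probability $\frac{c_n b_{N-n+2}}{1 + c_n b_{N-n+2}}$, which is exactly the bulleted rule. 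With (i) and (ii) in hand, the proposition follows by applying Proposition~\ref{prop:Schurcase} to the partition sequence and pushing the resulting rank-$(N+1)$ Schur process forward through the partition--particle correspondence.
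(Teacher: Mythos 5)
Your proposal is correct and follows essentially the same route as the paper: the paper simply observes that this proposition "follows directly from Proposition~\ref{prop:Schurcase}" via the bijection between interlacing partition sequences and particle arrays, which is exactly your strategy. Your point (ii) — verifying that the bulleted particle rules are the image of the Schur parallel update by reducing to single-variable skew Schur monomials — is a detail the paper defers to the discussion in~\cite{borodin2015random} and to its multicolor generalization (Proposition~\ref{prop:probs}), and your sketch of it is accurate.
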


\subsection{Markov Chains on LLT Processes}
Now we generalize the Markov chain above to a Markov chain on $k$-tuples of interlacing partitions. While the definition in terms of interlacing partitions will follow directly from machinery of LLT polynomials, the interpretation as particle dynamics will require careful computation.

Let $c_1,\dots, c_N, b_1,\dots, b_N \in \mathbb{R}_{> 0}$.
Define the \emph{LLT process} to be a probability measure on arrays of tuples of partitions given by
\begin{multline}
P(\bm{\lambda}^{(1)}, \bm{\mu}^{(2)}, \bm{\lambda}^{(2)}, \dots, \bm{\mu}^{(N)}, \bm{\lambda}^{(N)}) \label{eqn:LLTprocess}\\
 = \frac{1}{Z} \mathcal{L}_{\bm{\lambda}^{(1)}}(c_1;t) 
        t^{\tilde d(\bm{\lambda}^{(1)},\bm{\mu}^{(2)})} \mathcal{\tilde L}_{\bm{\lambda}^{(1)}/\bm{\mu}^{(2)}}(b_N;t)   \\
         \qquad \qquad \times \mathcal{L}_{\bm{\lambda}^{(2)}/\bm{\mu}^{(2)}}(c_2;t)
         t^{\tilde d(\bm{\lambda}^{(2)},\bm{\mu}^{(3)})} \mathcal{\tilde L}_{\bm{\lambda}^{(2)}/\bm{\mu}^{(3)}}(b_{N-1};t)  \\
         \times \cdots \times    
        \mathcal{L}_{\bm{\lambda}^{(N)}/\bm{\mu}^{(N)}}(c_N;t) 
        t^{\tilde d(\bm{\lambda}^{(N)},\mathbf{0})}\mathcal{\tilde L}_{\bm{\lambda}^{(N)}}(b_1;t)  \;\;. 
\end{multline}
Recall that this probability measure describes random $k$-tilings (Proposition \ref{prop:LLTaztec}).

The transition kernels from which we will build the Markov chain are, for $c = (c_1,\dots, c_l)$ and $b, c' \in \mathbb{R}$,
\begin{align*}
    p^{\uparrow}_{\bm{\lambda}\to\bm{\mu}}(c|b) =& \frac{t^{\tilde{d}(\bm{\mu},\bm{\lambda})}}{Z(c|b)}\frac{\mathcal{L}_{\bm{\mu}}(c;t)}{\mathcal{L}_{\bm{\lambda}}(c;t)} \tilde{\mathcal{L}}_{\bm{\mu}/\bm{\lambda}}(b;t) \\
p^{\downarrow}_{\bm{\lambda}\to\bm{\mu}}(c|c') =& \frac{\mathcal{L}_{\bm{\mu}}(c';t)}{\mathcal{L}_{\bm{\lambda}}(c,c';t)} \mathcal{L}_{\bm{\lambda}/\bm{\mu}}(c';t) \;\;. 
\end{align*}

These transition kernels satisfy a similar commutation relation to Equation \eqref{eqn:pcom}:
\begin{prop}
\begin{equation} \label{eqn:LLTcom}
\begin{aligned}
&\sum_{\bm{\mu}} p^{\uparrow}_{\bm{\lambda}\to\bm{\mu}}(c_{[1,l]}|b) p^{\downarrow}_{\bm{\mu}\to \bm{\nu}}(c_{[1,l-1]}|c_l) \\
& \qquad \qquad = \sum_{\bm{\mu}} p^{\downarrow}_{\bm{\lambda}\to\bm{\mu}}(c_{[1,l-1]}|c_l) p^{\uparrow}_{\bm{\mu}\to\bm{\nu}}(c_{[1,l-1]}|b)\;\;.
\end{aligned}
\end{equation}
\end{prop}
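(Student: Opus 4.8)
The plan is to mimic the proof of the Schur-case commutation relation \eqref{eqn:pcom}, with the dual skew-Cauchy identity for Schur polynomials replaced by the LLT Cauchy identity of Proposition~\ref{prop:Cauchy}. First I would substitute the definitions of $p^{\uparrow}$ and $p^{\downarrow}$ into the left-hand side of \eqref{eqn:LLTcom} and cancel the factor $\mathcal{L}_{\bm{\mu}}(c_{[1,l]};t)$, which occurs in the numerator of $p^{\uparrow}_{\bm{\lambda}\to\bm{\mu}}(c_{[1,l]}|b)$ and in the denominator of $p^{\downarrow}_{\bm{\mu}\to\bm{\nu}}(c_{[1,l-1]}|c_l)$. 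After pulling the $\bm{\mu}$-independent factors out of the sum, the left-hand side becomes
\[
\frac{\mathcal{L}_{\bm{\nu}}(c_{[1,l-1]};t)}{Z(c_{[1,l]}|b)\,\mathcal{L}_{\bm{\lambda}}(c_{[1,l]};t)}\sum_{\bm{\mu}} t^{\tilde d(\bm{\mu},\bm{\lambda})}\,\tilde{\mathcal{L}}_{\bm{\mu}/\bm{\lambda}}(b;t)\,\mathcal{L}_{\bm{\mu}/\bm{\nu}}(c_l;t).
\]

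The remaining sum is exactly the left-hand side of the Cauchy identity in Proposition~\ref{prop:Cauchy}, specialized to the one-element alphabets $X_n=(b)$ and $Y_m=(c_l)$, with the fixed partitions taken to be $\bm{\mu}_{\mathrm{Cauchy}}=\bm{\nu}$ and $\bm{\nu}_{\mathrm{Cauchy}}=\bm{\lambda}$ and the summation index playing the role of $\bm{\lambda}_{\mathrm{Cauchy}}$. Applying the identity turns this sum into $\bigl(\prod_{l'=0}^{k-1}(1+bc_lt^{l'})\bigr)\sum_{\bm{\mu}} t^{\tilde d(\bm{\nu},\bm{\mu})}\,\mathcal{L}_{\bm{\lambda}/\bm{\mu}}(c_l;t)\,\tilde{\mathcal{L}}_{\bm{\nu}/\bm{\mu}}(b;t)$. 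I would then record that $Z(c_{[1,l]}|b)=\prod_{i=1}^{l}\prod_{l'=0}^{k-1}(1+bc_it^{l'})$ — which equals the normalization making $\sum_{\bm{\mu}}p^{\uparrow}_{\bm{\lambda}\to\bm{\mu}}=1$, itself the Cauchy identity with $\bm{\mu}_{\mathrm{Cauchy}}=\bm{0}$ — so that the factor $\prod_{l'=0}^{k-1}(1+bc_lt^{l'})$ produced above exactly converts $1/Z(c_{[1,l]}|b)$ into $1/Z(c_{[1,l-1]}|b)$.

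Finally, I would expand the right-hand side $\sum_{\bm{\mu}} p^{\downarrow}_{\bm{\lambda}\to\bm{\mu}}(c_{[1,l-1]}|c_l)\,p^{\uparrow}_{\bm{\mu}\to\bm{\nu}}(c_{[1,l-1]}|b)$ in the same way; here the factor $\mathcal{L}_{\bm{\mu}}(c_{[1,l-1]};t)$ cancels between the numerator of $p^{\downarrow}$ and the denominator of $p^{\uparrow}$, leaving
\[
\frac{\mathcal{L}_{\bm{\nu}}(c_{[1,l-1]};t)}{Z(c_{[1,l-1]}|b)\,\mathcal{L}_{\bm{\lambda}}(c_{[1,l]};t)}\sum_{\bm{\mu}} t^{\tilde d(\bm{\nu},\bm{\mu})}\,\mathcal{L}_{\bm{\lambda}/\bm{\mu}}(c_l;t)\,\tilde{\mathcal{L}}_{\bm{\nu}/\bm{\mu}}(b;t),
\]
which agrees term by term with the reduced left-hand side obtained above, establishing \eqref{eqn:LLTcom}.

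The only genuine work here is bookkeeping: correctly identifying the four partition slots in Proposition~\ref{prop:Cauchy} with $\bm{\lambda},\bm{\mu},\bm{\nu}$, respecting the asymmetry of $\tilde d(\cdot,\cdot)$ in its two arguments, and tracking which alphabet ($c_{[1,l]}$ versus $c_{[1,l-1]}$) appears in each LLT factor and in the normalization. I expect that matching of indices to be the main (and essentially only) obstacle; there is no analytic content, and unlike in the verification that the kernels are stochastic, the branching rule of Proposition~\ref{prop:branching} is not needed for the commutation relation itself.
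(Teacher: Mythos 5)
Your proposal is correct and is exactly the argument the paper intends: the paper states this commutation relation without proof (remarking only, in the Schur case, that it follows from the Cauchy and branching identities), and your substitution--cancellation--Cauchy computation, with the slot identification $\bm{\lambda}_{\mathrm{Cauchy}}=\bm{\mu}$, $\bm{\nu}_{\mathrm{Cauchy}}=\bm{\lambda}$, $\bm{\mu}_{\mathrm{Cauchy}}=\bm{\nu}$, $X=(b)$, $Y=(c_l)$, checks out line by line, including your observation that only the Cauchy identity (and not branching) is needed. One note: for the cancellation of $\mathcal{L}_{\bm{\mu}}(c_{[1,l-1]};t)$ on the right-hand side you are implicitly reading the numerator of $p^{\downarrow}_{\bm{\lambda}\to\bm{\mu}}(c|c')$ as $\mathcal{L}_{\bm{\mu}}(c;t)$ rather than the $\mathcal{L}_{\bm{\mu}}(c';t)$ printed in the definition --- this is the correct reading (it matches the Schur case, the stochasticity via the branching rule, and the restated kernels in the proof of Proposition~\ref{prop:probs}), the printed $c'$ being a typo.
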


In the exact same way as in the Schur case, we can write the LLT process as 
\begin{equation}\label{eqn:LLTProcAlt}
\begin{aligned}
    &P(\bm{\lambda}^{(1)}, \bm{\mu}^{(2)}, \bm{\lambda}^{(2)}, \dots, \bm{\mu}^{(N)}, \bm{\lambda}^{(N)}) =\\
    & p^{\uparrow}_{\emptyset \rightarrow \bm{\lambda}^{(N)}}(c_{[1,N]} | b_1) p^{\downarrow}_{\bm{\lambda}^{(N)} \rightarrow \bm{\mu}^{(N)}}(c_1,\dots, c_{N-1}|c_N)  p^{\uparrow}_{\bm{\mu}^{(N)} \rightarrow \bm{\lambda}^{(N-1)}}(c_{[1,N-1]} | b_{2}) \; \times \cdots \\
&\qquad \qquad \cdots \times  p^{\downarrow}_{\bm{\lambda}^{(2)} \rightarrow \bm{\mu}^{(2)}}(c_{1}|c_2)  p^{\uparrow}_{\bm{\mu}^{(2)} \rightarrow \bm{\lambda}^{(1)}}(c_{1} | b_{N}) \;\;.
\end{aligned}
\end{equation}

We define the following update step that we will use for transitioning from rank $N$ to rank $(N + 1)$. 
\begin{definition}[LLT parallel update] \label{def:LLTprocupdate}
Suppose we are given a k-tuple of sequences of interlaced partitions
\[
\bm{0}\preceq\bm{\lambda}^{(1)} \succeq'\bm{\mu}^{(2)}\preceq \ldots \succeq'\bm{\mu}^{(N)}\preceq \bm{\lambda}^{(N)}  \succeq' \bm{\mu}^{(N+1)} \preceq \bm{\lambda}^{(N+1)}  \succeq' \bm{0} \;\;.
\]
The transition probabilities for updates $\bm{\mu}^{(k)} \rightarrow \tilde{\bm{\mu}}^{(k)}$ and $\bm{\lambda}^{(k)} \rightarrow \tilde{\bm{\lambda}}^{(k)}$ are defined as follows:

\begin{enumerate}
    \item For each $n  =2, \dots, N+1$, set $\tilde{\bm{\mu}}^{(n)} = \bm{\lambda}^{(n-1)}$.
    \item For each $n =1, \dots, N+1$, update $\bm{\lambda}^{(n)} \rightarrow \tilde{\bm{\lambda}}^{(n)}$ with transition probabilities 
    $$\tilde{P}_n(\tilde{\bm{\lambda}} | \bm{\lambda},\tilde{\bm{\mu}}) \coloneqq \frac{ p^{\uparrow}_{\bm{\lambda}\to \tilde{\bm{\lambda}}}(c_{[1,n]} |b_{N-n+2}) p^{\downarrow}_{\tilde{\bm{\lambda}} \to \tilde{\bm{\mu}}}(c_{[1,n-1]}|c_{n})}{ \sum_{\kappa} p^{\uparrow}_{\bm{\lambda}\to \bm{\kappa}}(c_{[1,n]}|b_{N-n+2}) p^{\downarrow}_{\bm{\kappa} \to \tilde{\bm{\mu}}}(c_{[1,n-1]}|c_{n})}\;\;.$$
\end{enumerate}
\end{definition}

As in the Schur case we have 

\begin{prop}\label{prop:LLTcase}
Suppose $\bm{\lambda}^{(1)},\bm{\mu}^{(2)}, \bm{\lambda}^{(2)}, \ldots,\bm{\mu}^{(N)},\bm{\lambda}^{(N)}$ are sampled from the LLT process
\begin{align*}
&\frac{1}{Z} \mathcal{L}_{\bm{\lambda}^{(1)}}(c_1;t) 
        t^{\tilde d(\bm{\lambda}^{(1)},\bm{\mu}^{(2)})} \mathcal{\tilde L}_{\bm{\lambda}^{(1)}/\bm{\mu}^{(2)}}(b_N;t)  \\
        & \times \mathcal{L}_{\bm{\lambda}^{(2)}/\bm{\mu}^{(2)}}(c_2;t) 
         t^{\tilde d(\bm{\lambda}^{(2)},\bm{\mu}^{(3)})} \mathcal{\tilde L}_{\bm{\lambda}^{(2)}/\bm{\mu}^{(3)}}(b_{N-1};t) \\
        & \times \cdots \times    
        \mathcal{L}_{\bm{\lambda}^{(N)}/\bm{\mu}^{(N)}}(c_N;t) 
        t^{\tilde d(\bm{\lambda}^{(N)},\mathbf{0})}\mathcal{\tilde L}_{\bm{\lambda}^{(N)}}(b_1;t)  
\end{align*}
and suppose $\bm{\mu}^{(N+1)} = \bm{\lambda}^{(N+1)} = \mathbf{0}$ deterministically.

Then after the LLT parallel update the new partitions
$$\tilde{\bm{\lambda}}^{(1)},\tilde{\bm{\mu}}^{(2)},\tilde{\bm{\lambda}}^{(2)}, \ldots,\tilde{\bm{\mu}}^{(N)},\tilde{\bm{\lambda}}^{(N)},\tilde{\bm{\mu}}^{(N+1)},\tilde{\bm{\lambda}}^{(N+1)}$$
are distributed according to the LLT process 
\begin{align*}
&\frac{1}{Z} \mathcal{L}_{\tilde{\bm{\lambda}}^{(1)}}(c_1;t) 
        t^{\tilde d(\tilde{\bm{\lambda}}^{(1)},\tilde{\bm{\mu}}^{(2)})} \mathcal{\tilde L}_{\tilde{\bm{\lambda}}^{(1)}/\tilde{\bm{\mu}}^{(2)}}(b_{N+1};t)  \\
        & \times \mathcal{L}_{\tilde{\bm{\lambda}}^{(2)}/\tilde{\bm{\mu}}^{(2)}}(c_2;t) 
         t^{\tilde d(\tilde{\bm{\lambda}}^{(2)},\tilde{\bm{\mu}}^{(3)})} \mathcal{\tilde L}_{\tilde{\bm{\lambda}}^{(2)}/\tilde{\bm{\mu}}^{(3)}}(b_{N};t) \\
        & \times \cdots \times    
        \mathcal{L}_{\tilde{\bm{\lambda}}^{(N+1)}/\tilde{\bm{\mu}}^{(N+1)}}(c_{N+1};t) 
        t^{\tilde d(\tilde{\bm{\lambda}}^{(N+1)},\mathbf{0})}\mathcal{\tilde L}_{\tilde{\bm{\lambda}}^{(N+1)}}(b_1;t) \;\;.
\end{align*}
\end{prop}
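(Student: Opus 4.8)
The plan is to mimic the proof of Proposition~\ref{prop:Schurcase} verbatim, replacing Schur polynomials with the $\mathcal{L},\tilde{\mathcal{L}}$ pair and the skew-Cauchy/branching identities with Proposition~\ref{prop:Cauchy} and Proposition~\ref{prop:branching}. First I would use the rewriting \eqref{eqn:LLTProcAlt} of the initial LLT process as an alternating product of $p^{\uparrow}$ and $p^{\downarrow}$ kernels, and observe (as in the Schur case) that the desired output LLT process, with $N$ replaced by $N+1$, likewise factors as
\[
p^{\uparrow}_{\emptyset \rightarrow \tilde{\bm{\lambda}}^{(N+1)}}(c_{[1,N+1]} | b_1)\, p^{\downarrow}_{\tilde{\bm{\lambda}}^{(N+1)} \rightarrow \tilde{\bm{\mu}}^{(N+1)}}(c_{[1,N]}|c_{N+1})\; \times \cdots \times\; p^{\uparrow}_{\tilde{\bm{\mu}}^{(2)} \rightarrow \tilde{\bm{\lambda}}^{(1)}}(c_{1} | b_{N+1}).
\]
So the goal reduces to the identity obtained by multiplying \eqref{eqn:LLTProcAlt} by $\prod_{n=1}^{N+1}\tilde{P}_n(\tilde{\bm{\lambda}}^{(n)}|\bm{\lambda}^{(n)},\tilde{\bm{\mu}}^{(n)})$ and summing over the old variables $\bm{\mu}^{(2)},\dots,\bm{\mu}^{(N)}$.

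Next I would carry out exactly the bookkeeping of the Schur proof. Since step (1) of Definition~\ref{def:LLTprocupdate} sets $\tilde{\bm{\mu}}^{(n)}=\bm{\lambda}^{(n-1)}$ deterministically, none of the $\tilde{P}_n$ depends on the $\bm{\mu}^{(i)}$ being summed over, so the numerators of the $\tilde{P}_n$, namely $\prod_{n=1}^{N+1} p^{\uparrow}_{\bm{\lambda}^{(n)} \to \tilde{\bm{\lambda}}^{(n)}}(c_{[1,n]}|b_{N-n+2})\, p^{\downarrow}_{\tilde{\bm{\lambda}}^{(n)} \to \tilde{\bm{\mu}}^{(n)}}(c_{[1,n-1]}|c_{n})$, pull outside the sum; after reindexing $N-n+2$, this product is precisely the target LLT process written in the form above. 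It then remains to show that $\sum_{\{\bm{\mu}^{(i)}\}}$ applied to \eqref{eqn:LLTProcAlt} cancels the denominators $\sum_{\bm{\kappa}} p^{\uparrow}_{\bm{\lambda}^{(n)}\to \bm{\kappa}}\, p^{\downarrow}_{\bm{\kappa}\to\tilde{\bm{\mu}}^{(n)}}$ of the $\tilde{P}_n$. For each $k=2,\dots,N+1$, the sum over $\bm{\mu}^{(k)}$ hits exactly the factor $p^{\downarrow}_{\bm{\lambda}^{(k)} \rightarrow \bm{\mu}^{(k)}}(c_{[1,k-1]}|c_k)\, p^{\uparrow}_{\bm{\mu}^{(k)} \rightarrow \bm{\lambda}^{(k-1)}}(c_{[1,k-1]}|b_{N-k+2})$, and by the commutation relation \eqref{eqn:LLTcom} this equals $\sum_{\bm{\kappa}} p^{\uparrow}_{\bm{\lambda}^{(k)} \to \bm{\kappa}}(c_{[1,k]}|b_{N-k+2})\, p^{\downarrow}_{\bm{\kappa}\to\bm{\lambda}^{(k-1)}}(c_{[1,k-1]}|c_k)$, which (since $\tilde{\bm{\mu}}^{(k)}=\bm{\lambda}^{(k-1)}$) is exactly the denominator of $\tilde{P}_k$; these telescope away one at a time, leaving the claimed product. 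One also checks $\sum_{\bm{\mu}}p^{\uparrow}=1$ and $\sum_{\bm{\mu}}p^{\downarrow}=1$ (using the $\bm{\nu}=\bm{\mu}=\bm{0}$ Cauchy identity and the branching rule respectively) so that everything is a genuine probability update, and that the new array again satisfies the interlacing constraints — this follows since $p^{\uparrow}$ is supported on $\bm{\lambda}\preceq'\bm{\mu}$-type growths and $p^{\downarrow}$ on the corresponding shrinkings, exactly matching the interlacing pattern of the target process.

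The genuinely new ingredient, and the only place the argument is not a transcription of the Schur case, is the commutation relation \eqref{eqn:LLTcom} itself; the paper states it as a proposition but (in this excerpt) without proof, so in a self-contained write-up I would prove it here. The plan there is: expand the left side using the definitions of $p^{\uparrow},p^{\downarrow}$, clear the $Z(c|b)$ normalization and the $\mathcal{L}_{\bm{\lambda}}(c,c';t)$ in the denominators, and reduce the claimed equality to an identity of the schematic form $\sum_{\bm{\mu}} t^{\tilde d(\bm{\mu},\bm{\lambda})}\,\tilde{\mathcal{L}}_{\bm{\mu}/\bm{\lambda}}(b;t)\,\mathcal{L}_{\bm{\mu}/\bm{\nu}}(c_l;t) = (\text{product factor})\sum_{\bm{\kappa}} t^{\tilde d(\bm{\nu},\bm{\kappa})}\,\mathcal{L}_{\bm{\lambda}/\bm{\kappa}}(c_l;t)\,\tilde{\mathcal{L}}_{\bm{\nu}/\bm{\kappa}}(b;t)$ — i.e.\ precisely the skew Cauchy identity of Proposition~\ref{prop:Cauchy} with $m=n=1$ (so $X_n=(c_l)$, $Y_m=(b)$), after relabeling $\bm{\mu}\leftrightarrow\bm{\lambda}$ in the Cauchy statement. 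Matching the power-of-$t$ prefactors $\tilde d$ on both sides and the single-variable product $\prod_{l=0}^{k-1}(1+bc_l t^l)$ against $Z(c_l|b)$ is the one spot that requires care, since the $\tilde d$ exponents in the kernel definitions must be tracked through the Cauchy identity; I expect this $t$-power bookkeeping to be the main obstacle, and I would handle it by choosing the single-variable specialization of Proposition~\ref{prop:Cauchy} and reading off the exponents directly, rather than by any combinatorial argument with coinversion triples. Everything downstream of \eqref{eqn:LLTcom} is then formally identical to the Schur proof, so I would present it compactly and refer back to that argument for the telescoping.
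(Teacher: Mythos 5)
Your proposal is correct and follows exactly the route the paper takes: its proof of Proposition~\ref{prop:LLTcase} is a one-line reference to the argument of Proposition~\ref{prop:Schurcase}, rewritten via Eqn.~(\ref{eqn:LLTProcAlt}) with the telescoping cancellation powered by the commutation relation~(\ref{eqn:LLTcom}), which is precisely the bookkeeping you describe. Your additional sketch of how~(\ref{eqn:LLTcom}) reduces to the single-variable specialization of Proposition~\ref{prop:Cauchy} (with the $\tilde d$ exponents and $Z(c_{[1,l]}|b)=\prod_{i\le l}\prod_{l'=0}^{k-1}(1+bc_i t^{l'})$ tracked through) is sound and fills in a step the paper states without proof.
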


\begin{proof}
The calculation is exactly as in the proof of Proposition \ref{prop:Schurcase} by writing the LLT process as in Eqn. (\ref{eqn:LLTProcAlt}) and using the commutation relations (\ref{eqn:LLTcom}).
\end{proof}

Through our bijection between interlacing tuples of partitions and multi-colored interlacing particle arrays we can view the Markov chain on LLT processes as dynamics on our particles. In the following proposition we compute the transition probabilities in this chain in terms of the corresponding interlacing particle arrays. See Figure \ref{fig:jumpprobs_ex} for an example of the particle jump probabilities along one row and see Figure \ref{fig:example} for an example of the full particle update.

\begin{prop}\label{prop:probs}
Let 
\[
\bm{\lambda}^{(n)} = (\bm{\lambda}^{(n,1)}, \dots, \bm{\lambda}^{(n,k)}) \qquad \text{and} \qquad \bm{\mu}^{(n)} = (\bm{\mu}^{(n,1)}, \dots, \bm{\mu}^{(n,k)}), \qquad n=1,\dots, N
\]
 be a random sequence of tuples of partitions sampled from the LLT process~\eqref{eqn:LLTprocess}, and choose 
$
\bm{\lambda}^{(N+1)} = \bm{\mu}^{(N+1)} = \bm{0}
$
deterministically. These correspond to particles at positions
\[
\mathbf{x}^{(n)} = (\bm{x}^{(n,1)}, \dots, \bm{x}^{(n,k)}) \qquad \text{and} \qquad \mathbf{y}^{(n)} = (\mathbf{y}^{(n, 1)},\dots, \mathbf{y}^{(n, k)}), \qquad n=1,\dots, N+1.
\]
Let 
\[
\tilde{\bm{\lambda}}^{(n)} = (\tilde{\bm{\lambda}}^{(n,1)}, \dots, \tilde{\bm{\lambda}}^{(n,k)}) \qquad \text{and} \qquad \tilde{\bm{\mu}}^{(n)} = (\tilde{\bm{\mu}}^{(n,1)}, \dots, \tilde{\bm{\mu}}^{(n,k)}), \qquad n=1,\dots, N+1
\]
be the sequence of tuples of partitions after the update, corresponding to particle positions
\[
\tilde{\mathbf{x}}^{(n)} = (\tilde{\bm{x}}^{(n,1)}, \dots, \tilde{\bm{x}}^{(n,k)}) \qquad \text{and} \qquad \tilde{\mathbf{y}}^{(n)} = (\tilde{\mathbf{y}}^{(n, 1)},\dots, \tilde{\mathbf{y}}^{(n, k)}), \qquad n=1,\dots, N+1.
\]
Then the dynamics on the multi-colored interlacing particle arrays defined below is equivalent to the LLT parallel update defined in Definition~\ref{def:LLTprocupdate} above.

For each $n = 1, \dots, N+1$, update the particles as follows:

\begin{enumerate}
    \item If $n \geq 2$, set $\tilde{\mathbf{y}}^{(n)} = \mathbf{x}^{(n-1)}$ deterministically.
    \item Given $ \bm{x}^{(n)}$ and $\tilde{\mathbf{y}}^{(n)}$, $\tilde{\mathbf{x}}^{(n)}$ 
can be sampled according to the following rules: \\
For each $l = 1,\dots, k$
\begin{itemize}
    \item If 
    $\bm{x}^{(n, l)}_i = \tilde{\mathbf{y}}^{(n, l)}_i-1$
    then
   $$ \tilde{\bm{x}}^{(n, l)}_i = \bm{x}^{(n, l)}_i+1$$
   with probability $1$.
   \item If 
    $\bm{x}^{(n, l)}_i = \tilde{\mathbf{y}}^{(n, l)}_{i-1}-1$
    then
   $$ \tilde{\bm{x}}^{(n, l)}_i = \bm{x}^{(n, l)}_i$$
   with probability $1$.
   \item Otherwise, $$ 
   \tilde{\bm{x}}^{(n, l)}_i = 
   \begin{cases}
   \bm{x}^{(n, l)}_i + 1 & 
    \text{ with probability } \frac{c_n b_{N-n+2}t^{\#_i(n,l)}}{1 + c_n b_{N-n+2}t^{\#_i(n,l)}} \\
   \bm{x}^{(n, l)}_i  & \text{ otherwise }
   \end{cases}
   $$
\end{itemize}
where 
\begin{equation}\label{eqn:tpow}
\resizebox{0.8\textwidth}{!}{$
\begin{aligned}
\#_i(n,l)=& |\{m>l|\text{there exists $j$ s.t. }\tilde{\bm{y}}_j^{(n, m)}\le \bm{x}_i^{(n, l)} \le \bm{x}_j^{(n, m)} \} | \\
& +| \{m<l|\text{there exists $j$ s.t. }\tilde{\bm{y}}_j^{(n, m)}\le \bm{x}_i^{(n, l)}+1 \le \bm{x}_j^{(n, m)}\} |.
\end{aligned}
$}
\end{equation}
These transitions are independent for each~$1 \leq i \leq n \leq N+1$, $1 \leq l \leq k$. In~\eqref{eqn:tpow}, we allow~$j = 1,\dots, n$, and we use the convention that~$\bm{\tilde{y}}_n^{(n, m)} = -n + 1/2$.
\end{enumerate}
\end{prop}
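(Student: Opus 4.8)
The plan is to evaluate the abstract transition kernel $\tilde P_n$ of Definition~\ref{def:LLTprocupdate} explicitly and recognize the result as the claimed product of independent particle moves; equivalence with the full LLT parallel update then follows since both updates set $\tilde{\mathbf y}^{(n)}=\mathbf x^{(n-1)}$ and then apply their respective level-$n$ rules independently over $n$.

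\emph{Step 1 (collapsing $\tilde P_n$ to a monomial).} Substituting the definitions of $p^{\uparrow}$ and $p^{\downarrow}$ into the numerator of $\tilde P_n(\tilde{\bm\lambda}\mid\bm\lambda,\tilde{\bm\mu})$, the factor $\mathcal L_{\tilde{\bm\lambda}}(c_{[1,n]};t)$ in the denominator of $p^{\uparrow}_{\bm\lambda\to\tilde{\bm\lambda}}(c_{[1,n]}\mid b_{N-n+2})$ cancels the identical factor $\mathcal L_{\tilde{\bm\lambda}}(c_{[1,n-1]},c_n;t)$ in $p^{\downarrow}_{\tilde{\bm\lambda}\to\tilde{\bm\mu}}(c_{[1,n-1]}\mid c_n)$. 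What remains depends on $\tilde{\bm\lambda}$ only through $t^{\tilde d(\tilde{\bm\lambda},\bm\lambda)}\,\tilde{\mathcal L}_{\tilde{\bm\lambda}/\bm\lambda}(b_{N-n+2};t)\,\mathcal L_{\tilde{\bm\lambda}/\tilde{\bm\mu}}(c_n;t)$, the other factors ($Z(c_{[1,n]}\mid b_{N-n+2})$, $\mathcal L_{\tilde{\bm\mu}}(c_n;t)$, $\mathcal L_{\bm\lambda}(c_{[1,n]};t)$) being independent of $\tilde{\bm\lambda}$ and hence cancelling against the normalizing sum. Therefore
\[
\tilde P_n(\tilde{\bm\lambda}\mid\bm\lambda,\tilde{\bm\mu})\;\propto\;t^{\tilde d(\tilde{\bm\lambda},\bm\lambda)}\,\tilde{\mathcal L}_{\tilde{\bm\lambda}/\bm\lambda}(b_{N-n+2};t)\,\mathcal L_{\tilde{\bm\lambda}/\tilde{\bm\mu}}(c_n;t),
\]
and since these are single-variable LLT polynomials, \eqref{eqn:monomials} rewrites the right-hand side as
\[
c_n^{\,|\tilde{\bm\lambda}/\tilde{\bm\mu}|}\;b_{N-n+2}^{\,|\tilde{\bm\lambda}/\bm\lambda|}\;t^{\,\text{coinv}(\tilde{\bm\lambda}/\tilde{\bm\mu})+\text{inv}(\tilde{\bm\lambda}/\bm\lambda)+\tilde d(\tilde{\bm\lambda},\bm\lambda)}.
\]

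\emph{Step 2 (support and the $c,b$-weight).} Nonvanishing of $\tilde{\mathcal L}_{\tilde{\bm\lambda}/\bm\lambda}(b;t)$ forces each $\tilde{\bm\lambda}^{(l)}/\bm\lambda^{(l)}$ to be a vertical strip, i.e. $\tilde{\bm x}^{(n,l)}_i\in\{\bm x^{(n,l)}_i,\bm x^{(n,l)}_i+1\}$, and nonvanishing of $\mathcal L_{\tilde{\bm\lambda}/\tilde{\bm\mu}}(c_n;t)$ forces $\tilde{\bm\lambda}\succeq\tilde{\bm\mu}$, i.e. $\tilde{\bm x}^{(n,l)}_i\ge\tilde{\bm y}^{(n,l)}_i>\tilde{\bm x}^{(n,l)}_{i+1}$ (using $\tilde{\bm y}^{(n,m)}_n=-n+\tfrac12$). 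Intersecting these with the interlacing \eqref{eq:particleInterlace}--\eqref{eq:particleBounds} of the pre-update configuration, one checks that the only forced moves are the two cases listed in the proposition and that all remaining choices are unconstrained and mutually independent (in particular adjacent particles can never collide). Since $\tilde{\bm\mu}$ and $\bm\lambda$ are fixed, $|\tilde{\bm\lambda}/\tilde{\bm\mu}|$ and $|\tilde{\bm\lambda}/\bm\lambda|$ each equal $(\text{const})+\#\{\text{particles that move}\}$, so the $c_n,b_{N-n+2}$ part of the weight factorizes as $(\text{const})\cdot\prod_{(i,l)}(c_n b_{N-n+2})^{1((i,l)\text{ moves})}$.

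\emph{Step 3 (the power of $t$; main step).} Apply Lemma~\ref{lem:tpow} to the two skew $k$-shapes $\tilde{\bm\lambda}/\tilde{\bm\mu}$ and $\tilde{\bm\lambda}/\bm\lambda$, writing $\text{coinv}(\tilde{\bm\lambda}/\tilde{\bm\mu})+\text{inv}(\tilde{\bm\lambda}/\bm\lambda)+\tilde d(\tilde{\bm\lambda},\bm\lambda)$ as a sum of contributions \eqref{eq:tHor}, \eqref{eq:tVert} indexed by ordered pairs of rows of distinct colors. For the pair (row $i$ of color $l$, row $j$ of color $m$) I would translate the min/max and the equality \eqref{eq:tVert} into comparisons of particle positions, using the relations $\tilde{\bm x}^{(n,\cdot)}_i\in\{\bm x^{(n,\cdot)}_i,\bm x^{(n,\cdot)}_i+1\}$, $\tilde{\bm\mu}^{(n)}=\bm\lambda^{(n-1)}$, and the interlacing of the array at levels $n$ and $n-1$, and then examine the four cases according to whether each of the two particles moves. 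The aim is to show that this contribution is an \emph{affine} function of the two ``did it move'' indicators, with the coefficient of ``$(i,l)$ moves'' equal to $1(\tilde{\bm y}^{(n,m)}_j\le\bm x^{(n,l)}_i\le\bm x^{(n,m)}_j)$ if $m>l$ and to $1(\tilde{\bm y}^{(n,m)}_j\le\bm x^{(n,l)}_i+1\le\bm x^{(n,m)}_j)$ if $m<l$ --- crucially independent of whether $(j,m)$ moves. Summing over $j$ and $m$ gives exactly $\#_i(n,l)$ of \eqref{eqn:tpow}, while the ``constant'' term (pairs of non-moving, or forced, particles) is independent of the free choices and absorbed into the normalization. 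This ``affineness / no cross term'' statement is the heart of the proof and the step I expect to be the main obstacle: the bookkeeping of which quantities are evaluated at pre- versus post-update positions is delicate, and it is precisely the interlacing constraints that let one rewrite the post-update comparisons appearing in \eqref{eq:tHor}--\eqref{eq:tVert} in terms of the pre-update data used to define $\#_i(n,l)$.

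\emph{Step 4 (conclusion).} Combining Steps~1--3, $\tilde P_n(\tilde{\bm\lambda}\mid\bm\lambda,\tilde{\bm\mu})$ is proportional to $\prod_{(i,l)}(c_n b_{N-n+2}t^{\#_i(n,l)})^{1((i,l)\text{ moves})}$ over the free particles, with forced particles contributing a constant. As the free moves range over all $0/1$-assignments to the free particles, the normalization automatically factorizes as $\prod_{(i,l)\text{ free}}(1+c_n b_{N-n+2}t^{\#_i(n,l)})$, so $\tilde P_n$ is the product over free particles of the law that moves $(i,l)$ up with probability $\frac{c_n b_{N-n+2}t^{\#_i(n,l)}}{1+c_n b_{N-n+2}t^{\#_i(n,l)}}$, which is precisely rule~(2) of the proposition; rule~(1) is the definition $\tilde{\mathbf y}^{(n)}=\mathbf x^{(n-1)}$. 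Since $\#_i(n,l)$ depends only on the pre-update data and the (already determined) $\tilde{\mathbf y}^{(n,\cdot)}$, the particle moves are genuinely independent across all $1\le i\le n\le N+1$, $1\le l\le k$, and by Proposition~\ref{prop:LLTcase} this dynamics realizes the LLT parallel update, as claimed.
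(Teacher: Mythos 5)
Your overall route is the same as the paper's: cancel the $\tilde{\bm{\lambda}}$-dependence of $\tilde P_n$ down to the monomial $t^{\tilde d(\tilde{\bm{\lambda}},\bm{\lambda})}\,\tilde{\mathcal L}_{\tilde{\bm{\lambda}}/\bm{\lambda}}(b;t)\,\mathcal L_{\tilde{\bm{\lambda}}/\tilde{\bm{\mu}}}(c;t)$, identify the support with the admissible particle jumps, and extract the exponent of $t$ via Lemma~\ref{lem:tpow}. Steps 1, 2 and 4 are correct (modulo a harmless slip of wording: the cancelling factor $\mathcal L_{\tilde{\bm{\lambda}}}(c_{[1,n]};t)$ sits in the \emph{numerator} of $p^{\uparrow}_{\bm{\lambda}\to\tilde{\bm{\lambda}}}$ and the denominator of $p^{\downarrow}_{\tilde{\bm{\lambda}}\to\tilde{\bm{\mu}}}$, not the other way around). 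Your ``affine in the jump indicators, with no cross terms'' formulation of Step 3 is an equivalent, arguably cleaner, packaging of what the paper proves by showing that the ratio of weights across a single non-forced jump of particle $(i,l)$ equals $c\, b\, t^{\#_i(n,l)}$ independently of the other particles' jumps, cf.~\eqref{eqn:ratios}.

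However, Step 3 is where essentially the entire content of the proof lives, and you have only stated what must be verified rather than verified it. The paper's proof of \eqref{eqn:ratios} is an exhaustive case analysis (six cases, several with subcases) over the relative positions of $\tilde{\bm{\lambda}}_j^{(m)}-j$, $\tilde{\bm{\lambda}}_i^{(l)}-i$, $\tilde{\bm{\mu}}_j^{(m)}-j$, $\tilde{\bm{\mu}}_i^{(l)}-i$, applying \eqref{eq:tHor} and \eqref{eq:tVert} to numerator and denominator separately. The delicate point you correctly anticipate --- contributions that a priori depend on whether the \emph{other} particle $(j,m)$ jumped --- genuinely occurs (the paper's cases 5 and 6 split according to whether $\tilde{\bm{\lambda}}_j^{(m)}-j=\bm{\lambda}_j^{(m)}-j+1$), and resolving it is exactly what shows the coefficient reduces to the pre-update indicator $1(\tilde{\bm{\mu}}_j^{(m)}-j\le\bm{\lambda}_i^{(l)}-i\le\bm{\lambda}_j^{(m)}-j)$ appearing in \eqref{eqn:tpow}. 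Until that casework is carried out, the assertion that the total exponent of $t$ equals a constant plus $\sum \#_i(n,l)$ over jumping particles is unproven; as written, the proposal is an accurate plan with its main computation missing.
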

\begin{proof}
First let us recall that for the Markov chain on LLT processes, at level $n$ the transition probabilities are proportional to the product of the transition kernels
\begin{align*}
    p^{\uparrow}_{\bm{\lambda}\to\bm{\mu}}(c_{[1,n]}|b) =& \frac{t^{\tilde{d}(\bm{\mu},\bm{\lambda})}}{Z(c_{[1,n]}|b)}\frac{\mathcal{L}_{\bm{\mu}}(c_{[1,n]};t)}{\mathcal{L}_{\bm{\lambda}}(c_{[1,n]};t)} \tilde{\mathcal{L}}_{\bm{\mu}/\bm{\lambda}}(b;t) \\
p^{\downarrow}_{\bm{\lambda}\to\bm{\mu}}(c_{[1,n-1]}|c) =& \frac{\mathcal{L}_{\bm{\mu}}(c_{[1,n-1]};t)}{\mathcal{L}_{\bm{\lambda}}(c_{[1,n]};t)} \mathcal{L}_{\bm{\lambda}/\bm{\mu}}(c;t) \;\;. 
\end{align*}

To simplify notation, in what follows we let $c = c_n, b = b_{N-n+2}$, and $\tilde{\bm{\lambda}} =\bm{\tilde{\lambda}}^{(n)}, \bm{\lambda} = \bm{\lambda}^{(n)},\tilde{\bm{\mu}} = \tilde{\bm{\mu}}^{(n)}$. We see that the probability of a particular $\tilde{\bm{\lambda}} $ under $\tilde{P}_n(\tilde{\bm{\lambda}} | \bm{\lambda},\tilde{\bm{\mu}}) $ is proportional to 

\begin{align*}
    \frac{t^{\tilde{d}(\tilde{\bm{\lambda}},\bm{\lambda})}}{Z(c_{[1,n]}|b)}\frac{\mathcal{L}_{\tilde{\bm{\lambda}}}(c_{[1,n]};t)}{\mathcal{L}_{\bm{\lambda}}(c_{[1,n]};t)} \tilde{\mathcal{L}}_{\tilde{\bm{\lambda}}/\bm{\lambda}}(b;t) & \cdot
    \frac{\mathcal{L}_{\tilde{\bm{\mu}}}(c_{[1,n-1]};t)}{\mathcal{L}_{\tilde{\bm{\lambda}}}(c_{[1,n]};t) } \mathcal{L}_{\tilde{\bm{\lambda}}/\tilde{\bm{\mu}}}(c;t) \\
    &\propto t^{\tilde{d}(\tilde{\bm{\lambda}},\bm{\lambda})} \tilde{\mathcal{L}}_{\tilde{\bm{\lambda}}/\bm{\lambda}}(b;t)  \mathcal{L}_{\tilde{\bm{\lambda}}/\tilde{\bm{\mu}}}(c;t) \;\;
\end{align*}
where we have only kept factors depending on $\tilde{\bm{\lambda}}$.

It is not hard to see from the definition of LLT polynomials that the quantity~$\tilde{\mathcal{L}}_{\tilde{\bm{\lambda}}/\bm{\lambda}}(b;t)$ will be~$0$ unless~$\bm{\tilde{\lambda}}$ corresponds to a configuration where all particles jump by at most~$1$. Furthermore, if~$\bm{\lambda}, \bm{\tilde{\mu}}$ correspond to a particle configuration where there are particles forced to jump or stay, then~$\mathcal{L}_{\tilde{\bm{\lambda}}/\tilde{\bm{\mu}}}(c;t)$ will be~$0$
unless~$\tilde{\bm{\lambda}}$ corresponds to a configuration where all of these particles do in fact jump or stay. Therefore, the possible~$k$-tuples~$\bm{\tilde{\lambda}}$ correspond exactly to the possible outcomes of particle jumps in the proposition. 

Thus, to prove the proposition, it suffices to show that for these~$k$-tuples~$\bm{\tilde{\lambda}}$, the ratios of their transition probabilities are equal to the ratios of the particle transition probabilities described in the proposition. It is enough to compare the ratio for pairs of $\tilde{\bm{x}}$ which differ by a single non-forced particle jump, as any ratio of the particle transition probabilities can be written as a product of such simple ratios. For concreteness, suppose the jump was made by particle $i$ of color $l$.  

Two particle configurations differing by a single particle jump are equivalent to two tuples of partitions differing by the corresponding single cell in one of their Young diagrams. Define $\bm{\delta}(l, i) = (\bm{\delta}(l, i)^{(1)}, \dots, \bm{\delta}(l, i)^{(k)})$, where
$$\bm{\delta}(l, i)^{(c)}_j = \bm{1}_{c = l, j = i} \;\;.$$
 From the above discussion, we see that we must show that
\begin{equation}\label{eqn:ratios}
    \frac{t^{\tilde{d}(\tilde{\bm{\lambda}} +\bm{\delta}(l,i),\bm{\lambda})}\tilde{\mathcal{L}}_{(\tilde{\bm{\lambda}} + \bm{\delta}(l,i))/\bm{\lambda}}(b;t)  \mathcal{L}_{(\tilde{\bm{\lambda}}+\bm{\delta}(l, i))/\tilde{\bm{\mu}}}(c;t)}{t^{\tilde{d}(\tilde{\bm{\lambda}},\bm{\lambda})}\tilde{\mathcal{L}}_{\tilde{\bm{\lambda}}/\bm{\lambda}}(b;t)  \mathcal{L}_{\tilde{\bm{\lambda}}/\tilde{\bm{\mu}}}(c;t)} = 
    c b t^{\#_i(n,l)} \;\;
\end{equation}
where $\#_i(n,l)$ is defined in (\ref{eqn:tpow}), and~$\tilde{ \bm{\lambda}}^{(l)}_i = \bm{\lambda}^{(l)}_i$.

To show that the powers of $c$ and $b$ are correct in the equality (\ref{eqn:ratios}), recall Eqn. (\ref{eqn:monomials}): for LLT polynomials with a single variable we have
\[
\begin{aligned}
\mathcal{L}_{\bm{\kappa}/\bm{\nu}}(c;t) = c^{|\bm{\kappa}/\bm{\nu}|} t^{\text{coinv}(\bm{\kappa}/\bm{\nu})} \\
\mathcal{\tilde L}_{\bm{\kappa}/\bm{\nu}}(b;t) = b^{|\bm{\kappa}/\bm{\nu}|} t^{\text{inv}(\bm{\kappa}/\bm{\nu})}.
\end{aligned}
\]
Since the Young diagram of $(\tilde{\bm{\lambda}} + \bm{\delta}(l, i))/\bm{\lambda}$ has exactly one more cell than that of $\tilde{\bm{\lambda}} /\bm{\lambda}$, the numerator will have exactly one extra factor of $b$, and similarly one extra factor of $c$.

We are left to show the powers of $t$ match on both sides of (\ref{eqn:ratios}). The powers of $t$ on the LHS of (\ref{eqn:ratios}) come from two sources: interactions between color $l$ and colors $m>l$, and interactions between color $l$ and colors $m<l$. We show in detail that the contributions when the color $m$ is larger give exactly the first term in (\ref{eqn:tpow}). A similar analysis can be done to show that the contributions when the  color $m$ is smaller give the second term in (\ref{eqn:tpow}). 

 Looking at the first term of (\ref{eqn:tpow}), note the particle position inequalities
\[
\tilde{\bm{y}}_j^{(n, m)}\le \bm{x}_i^{(n, l)} \le \bm{x}_j^{(n, m)}
\]
can be written in terms of the parts of the partitions as
\[
\tilde{\bm{\mu}}_j^{(m)}-j\le \bm{\lambda}^{(l)}_i-i\le \bm{\lambda}^{(m)}_j-j.
\]
We will show that in the LHS of (\ref{eqn:ratios}) we get an extra power of $t$ in the numerator that is not present in the denominator exactly when there exists an $m$ and $j$ where the above inequality holds.

To do so we do an exhaustive check over all possible relative positions of the corresponding particles and in each case use Lemma \ref{lem:tpow} to determine the powers of $t$ on the LHS of~\eqref{eqn:ratios}. This casework is listed below:

\begin{enumerate}
    \item If $\tilde{\bm{\lambda}}_j^{(m)}-j > \tilde{\bm{\lambda}}_i^{(l)}-i$ and  $\tilde{\bm{\mu}}_j^{(m)}-j \ge \tilde {\bm{\mu}}_i^{(l)}-i$ there are three subcases for the contribution from Eqn. (\ref{eq:tHor}):
    \begin{enumerate}
        \item If $\tilde{\bm{\lambda}}_i^{(l)}-i \ge \tilde{\bm{\mu}}_j^{(m)}-j$ then there is a power of $t^{\tilde{\bm{\lambda}}_i^{(l)}-i +1 - \tilde{\bm{\mu}}_j^{(m)}+j}$ in the numerator and $t^{\tilde{\bm{\lambda}}_i^{(l)}-i - \tilde{\bm{\mu}}_j^{(m)}+j}$ in the denominator.
        \item  If $\tilde{\bm{\lambda}}_i^{(l)}-i +1 
        = \tilde{\bm{\mu}}_j^{(m)}-j$ then there is a power of $t^0$ in the numerator and no contribution to the denominator.
        \item If $\tilde{\bm{\lambda}}_i^{(l)}-i + 1 < 
         \tilde{\bm{\mu}}_j^{(m)}-j$ there is no contribution to either the numerator or the denominator.
    \end{enumerate}
    For the contribution of Eqn. (\ref{eq:tVert}), note there is no contribution to the denominator since $\tilde{\bm{\lambda}}_j^{(m)}-j \ne \tilde{\bm{\lambda}}_i^{(l)}-i$. In the numerator there is no contribution since $\bm{\lambda}_i^{(l)}-i \ne \tilde{\bm{\lambda}}_i^{(l)}-i + 1$.

    Overall, we get a net power of $t$ in the ratio in case (a) and none in the other cases.

    \item If $\tilde{\bm{\lambda}}_j^{(m)}-j > \tilde {\bm{\lambda}}_i^{(l)}-i$ and  $\tilde{\bm{\mu}}_j^{(m)}-j < \tilde {\bm{\mu}}_i^{(l)}-i$, note that we must have $\tilde {\bm{\lambda}}_i^{(l)}-i \ge \tilde {\bm{\mu}}_i^{(l)}-i$. Then from Eqn. (\ref{eq:tHor}) we get a $t^{\tilde {\bm{\lambda}}_i^{(l)} - i - \tilde {\bm{\mu}}_i^{(l)}+i}$ in the denominator and a $t^{\tilde {\bm{\lambda}}_i^{(l)}+1 - i - \tilde {\bm{\mu}}_i^{(l)}+i}$ in the numerator.
    
    Eqn. (\ref{eq:tVert}) does not contribute to either for the same reason as in case 1.

    Overall, we have a net power of $t$ in the ratio.
    
    \item If $\tilde{\bm{\lambda}}_j^{(m)}-j < \tilde {\bm{\lambda}}_i^{(l)}-i$ and  $\tilde{\bm{\mu}}_j^{(m)}-j \ge \tilde {\bm{\mu}}_i^{(l)}-i$ then from Eqn. (\ref{eq:tHor}) we get a $t^{\tilde {\bm{\lambda}}_j^{(m)} - j - \tilde {\bm{\mu}}_j^{(m)}+j + 1}$ in both the numerator and the denominator. 
    
    Eqn. (\ref{eq:tVert}) does not contribute to either, again, for the same reason as in case 1.

    Overall, there is no net power of $t$ in the ratio.

    \item If $\tilde{\bm{\lambda}}_j^{(m)}-j < \tilde {\bm{\lambda}}_i^{(l)}-i$ and  $\tilde{\bm{\mu}}_j^{(m)}-j < \tilde {\bm{\mu}}_i^{(l)}-i$ then for the contribution from Eqn. (\ref{eq:tHor}) we have two subcases:
    \begin{enumerate}
        \item If $\tilde{\bm{\lambda}}_j^{(m)}-j\ge \tilde {\bm{\mu}}_i^{(l)}-i$ then we get a $t^{\tilde {\bm{\lambda}}_j^{(m)} - j - \tilde {\bm{\mu}}_i^{(l)}+i +1}$ in both the numerator and the denominator.
        \item Otherwise, there is no contribution to either.
    \end{enumerate}
    
    Eqn. (\ref{eq:tVert}) still does not contribute to either for the same reasons as above.

    Overall there is no net power of $t$ in the ratio.

    \item If $\tilde{\bm{\lambda}}_j^{(m)}-j = \tilde {\bm{\lambda}}_i^{(l)}-i$ and  $\tilde{\bm{\mu}}_j^{(m)}-j \ge \tilde {\bm{\mu}}_i^{(l)}-i$ then from Eqn. (\ref{eq:tHor}) we get a $t^{\tilde {\bm{\lambda}}_j^{(m)} - j - \tilde {\bm{\mu}}_j^{(m)}+j }$ in the denominator and a $t^{\tilde {\bm{\lambda}}_j^{(m)} - j - \tilde {\bm{\mu}}_j^{(m)}+j +1}$ in the numerator. Here the extra power of $t$ in the numerator comes from the indicator $1(\tilde{\bm{\lambda}}_j^{(m)}-j < \tilde {\bm{\lambda}}_i^{(l)}-i+1)$ in (\ref{eq:tHor}). 
    
    Note that (\ref{eq:tVert}) does not contribute to the numerator as $\tilde{\bm{\lambda}}_j^{(m)}-j \ne \tilde {\bm{\lambda}}_i^{(l)}-i+1$. The contribution from Eqn. (\ref{eq:tVert}) to the denominator can be split into  two subcases:
    \begin{enumerate}
        \item  If $\tilde {\bm{\lambda}}_j^{(m)} - j = \bm{\lambda}_j^{(m)}-j+1$, it contributes a single power of $t$ to the denominator.
        \item Otherwise, it contributes nothing to the denominator.
    \end{enumerate}

    Overall, there is no net power of $t$ in the ratio if $\tilde {\bm{\lambda}}_j^{(m)} - j = \bm{\lambda}_j^{(m)}-j+1$, and a net a power of $t$ otherwise.

    \item If $\tilde{\bm{\lambda}}_j^{(m)}-j = \tilde {\bm{\lambda}}_i^{(l)}-i$ and  $\tilde{\bm{\mu}}_j^{(m)}-j < \tilde {\bm{\mu}}_i^{(l)}-i$, note that we must have $\tilde{\bm{\lambda}}_j^{(m)}-j \ge \tilde {\bm{\mu}}_i^{(l)}-i$.  Then for the contribution from Eqn. (\ref{eq:tHor}) we get a $t^{\tilde {\bm{\lambda}}_j^{(m)} - j - \tilde {\bm{\mu}}_i^{(l)}+i }=1$ in the denominator and a $t^{\tilde {\bm{\lambda}}_j^{(m)} - j - \tilde {\bm{\mu}}_i^{(l)}+i +1}=t$ in the numerator.
    
      Eqn. (\ref{eq:tVert}) does not contribute to the numerator for the same reason as in case 5. The contribution to the denominator from Eqn. (\ref{eq:tVert}) can be split into two subcases:
    \begin{enumerate}
        \item If $\tilde {\bm{\lambda}}_j^{(m)} - j = \bm{\lambda}_j^{(m)}-j+1$, it contributes a single power of $t$ to the denominator.
        \item Otherwise, it contributes nothing to the denominator.
    \end{enumerate} 
    Overall, there is no net power of $t$ in the ratio if $\tilde {\bm{\lambda}}_j^{(m)} - j = \bm{\lambda}_j^{(m)}-j+1$, and a net a power of $t$ otherwise. 
    
\end{enumerate}
One can check the ratio of the powers of $t$ is given by
\[
\begin{cases}
1, &\text{ if } \tilde {\bm{\mu}}_j^{(m)}-j \le \bm{\lambda}_i^{(l)}-i \leq \bm{\lambda}_j^{(m)}-j \text{  (Cases 1 (a), 2, 5 (b), 6 (b))} \\
0, &\text{ otherwise (Cases 1 (b), 1 (c), 3, 4, 5 (a), 6 (a))}
\end{cases}
\]
exactly as we desired. Summing over all $m>l$ and all $j$ gives the first term in Eqn. (\ref{eqn:tpow}).

\end{proof}

\begin{figure}
         \centering
         \includegraphics[scale=.7]{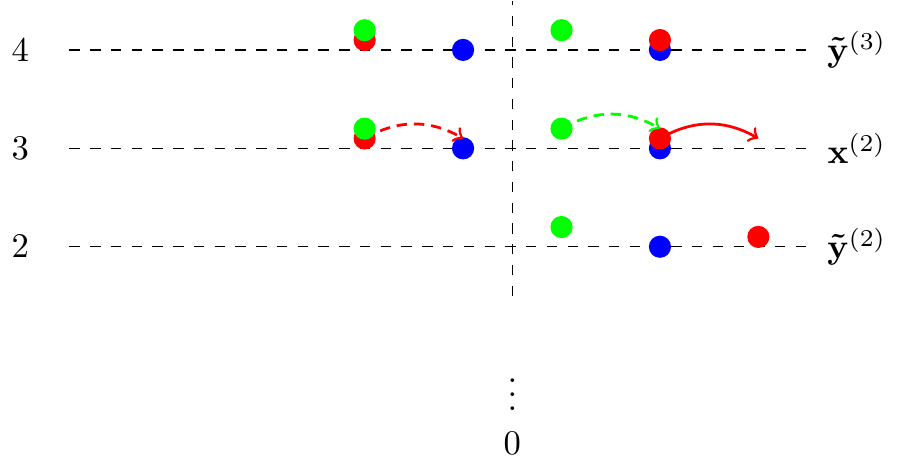}
        \caption{Shown is a possible configuration of particles of the Maya diagrams on diagonals $2, 3$ and $4$, after step one of the update. We use the convention blue $ = 1 <$ red $= 2 <$ green $ = 3$. The solid red arrow denotes a forced jump for the red particle to preserve the interlacing of the red particles. Supposing we have~$c_i = b_i \equiv 1$, the dashed red arrow denotes a jump which will happen with probability $\frac{t^2}{1 + t^2}$, because~$\bm{\tilde{y}}_2^{(2, 1)} \leq \bm{x}_2^{(2, 2)}+1 \leq \bm{x}_2^{(2, 1)}$ and~$\bm{\tilde{y}}_2^{(2, 3)} \leq \bm{x}_2^{(2, 2)} \leq \bm{x}_2^{(2, 3)}$. Similarly, the dashed green arrow denotes a jump which has probability $\frac{t}{1 + t}$. }
        \label{fig:jumpprobs_ex}
\end{figure}

As a simple example of the particle dynamics, one may set all parameters $b_i$,$c_i$ equal to $1$ and consider $\bm{x}^{(1)}$, that is, the bottom level of particles. Along this row we have exactly one particle of each color, and the marginal evolution of these particles is itself a Markov chain. At each step each particle independently either stays in place or jumps by $1$ to the right. The probability for the particle of color $l$, at position $\bm{x}_1^{(1, l)}$, to jump is 
$$\frac{t^{\#_1(1,l)}}{1 + t^{\#_1(1,l)}}$$
where 
\begin{align*}
\#_1(1,l)= &|\{m>l|  \bm{x}_1^{(1, l)} \leq \bm{x}_1^{(1, m)} \} | \\
& +| \{m<l|\bm{x}_1^{(1, l)} < \bm{x}_1^{(1, m)}  \} | \;\;.
\end{align*}
In other words, if at time $T$ particles are ordered $0,1,\dots, n-1$ from right to left, breaking ties by putting larger colors first, the jump probability of particle $i$ is $\frac{t^i}{1 + t^i}$.  When $t<1$, we see that if a particle falls behind the other it becomes discouraged and moves more slowly, while for $t>1$ it becomes determined to catch up and moves more quickly.


\begin{figure}
     \centering
     \begin{subfigure}[a]{1\linewidth}
         \centering
         \includegraphics[scale=.35]{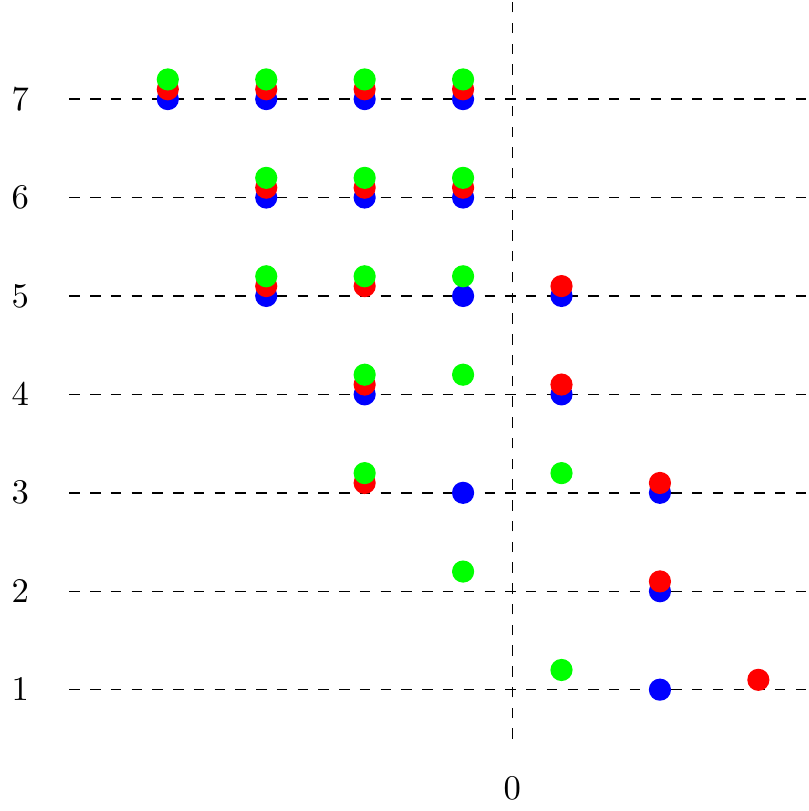}\hfill
         \includegraphics[scale=.4]{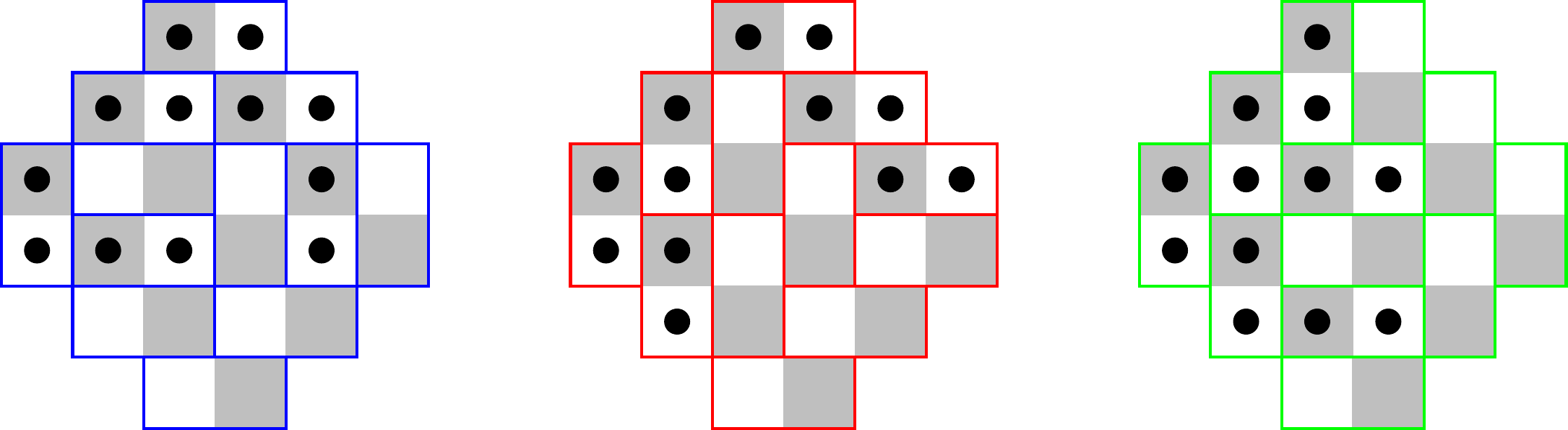}
         \caption{Initial configuration.}
         \label{fig:init}
     \end{subfigure}\\
     \vfill
     \begin{subfigure}[b]{1\linewidth}
         \centering
         \includegraphics[scale=.35]{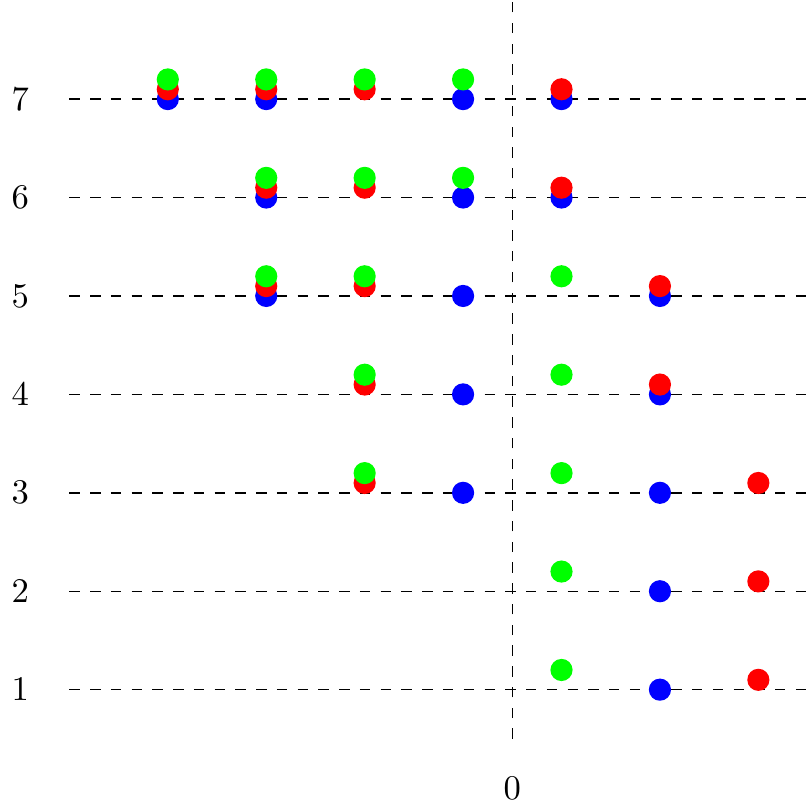}\hfill
         \includegraphics[scale=.3]{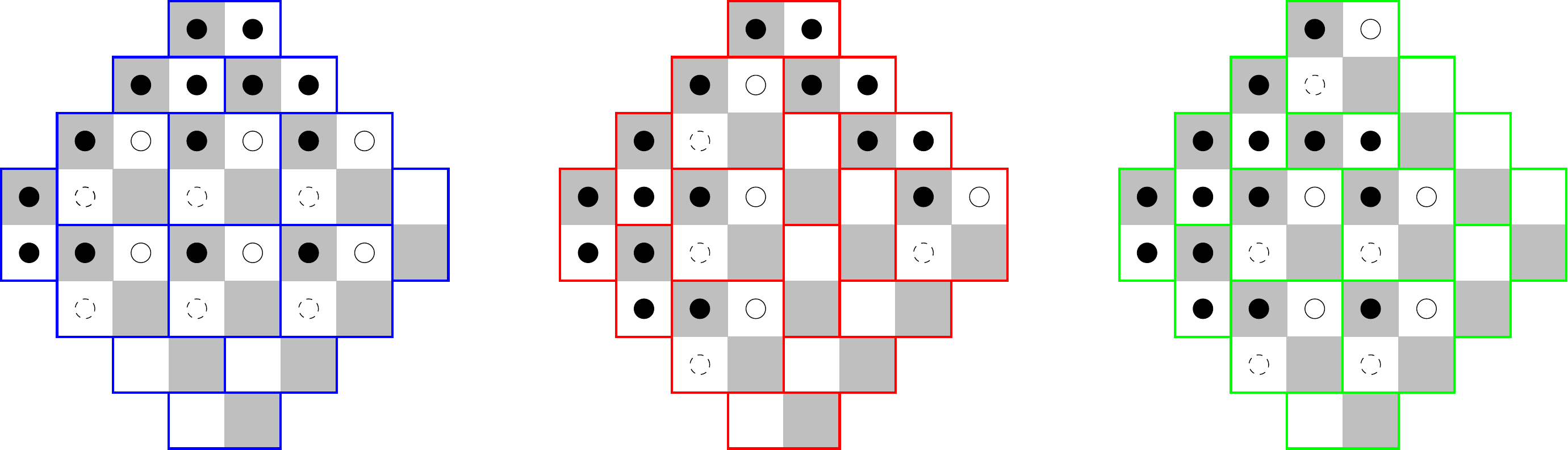}
         \caption{After step 1 and forced jumps.}
         \label{fig:after_step1}
     \end{subfigure}
     
     \begin{subfigure}[b]{1\linewidth}
         \centering
         \includegraphics[scale=.35]{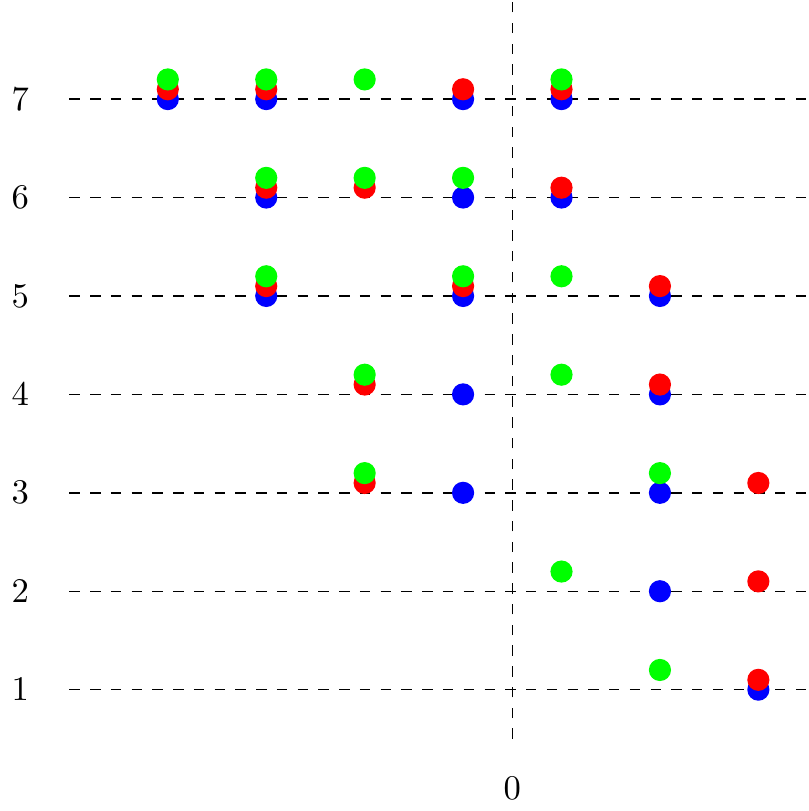}\hfill
         \includegraphics[scale=.3]{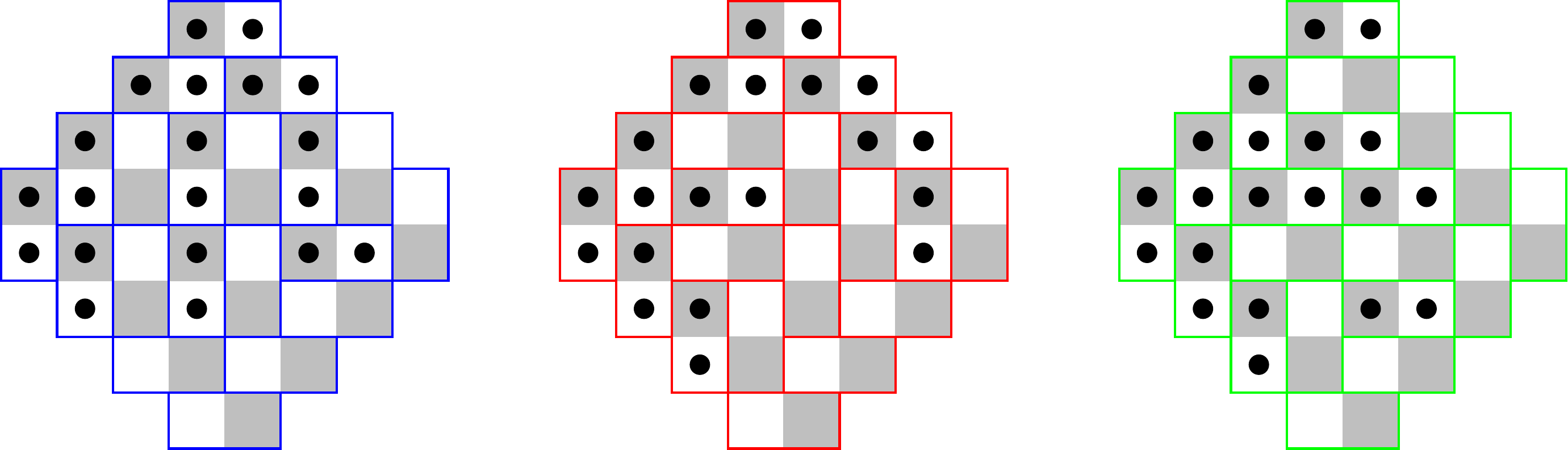}
         \caption{After step 2.}
         \label{fig:after_step2}
     \end{subfigure}
        \caption{Here we illustrate one possibility for the random update of the particle configuration of the $k = 3$-tiling of size $N = 3$ shown in Figure \ref{ex:3til-bijection}. First, shown is the initial configuration, augmented with an extra empty tuple of Maya diagrams. Second, the particles are shown after both step 1 and the forced jumps of step 2. Finally, one possible outcome of step 2 is shown. }
        \label{fig:example}
\end{figure}

\section{Shuffling}\label{sec:shuffling}
In this section we show how the particle dynamics defined in Section \ref{sec:Markov} can be described as an algorithm which acts directly on the tilings by sliding, destroying, and creating dominos. In the one color case, this is known as the domino shuffling algorithm~\cite{aztec0, shuffling1}.

\subsection{Interpretation in terms of Dominos: Schur Case}\label{subsec:SchurShuffle}
Domino shuffling is a sampling algorithm to generate a random rank-$N$ domino tiling. Via the bijection between tilings and interlacing particle arrays, the particle dynamics defined in Section \ref{sec:MarkovSchur} coincides with the shuffling algorithm. Here we will review the shuffling algorithm, and refer the reader to Propp \cite{shuffling1} for more details.

 Recall that we assign weights $c_i$ to the horizontal dominos on the diagonal slice~$2 i - 1$, and $b_{N-i+1}$ to the horizontal dominos on slice~$2 i$, respectively, for~$i = 1,\dots, N$. Take two additional numbers $c_{N+1}, b_{N+1}$. We now describe a way to randomly sample a tiling of rank~$N+1$ given one of rank~$N$, such that if the original one is sampled with weights $(c_1, \dots, c_N), (b_N, \dots, b_1)$, the one obtained from the algorithm will be sampled with weights $(c_1, \dots, c_{N+1}), (b_{N+1}, \dots, b_1)$. 
 
 Given the checkerboard coloring of the squares of rank-$N$ Aztec diamond, recall that there are four types of dominos which can appear in a tiling. They are shown in Figure \ref{fig:ADex}. Label these four types of dominos as S (South), W (West), N (North), E (East), respectively. Given a tiling $T$ of rank~$N$, following three steps will lead to a tiling $T'$ of rank~$N+1$:
\begin{enumerate}
    \item (Sliding) S dominos slide one unit South, W dominos slide one unit West, N dominos slide one unit North, and E dominos slide one unit East
    \item (Destruction) If two dominos cross each other's path as they slide, both are destroyed and deleted from the tiling.
    \item (Creation) What remains will be a partial tiling of rank~$N+1$. The un-tiled portion will be a disjoint union of $2 \times 2$ blocks (in a unique way). Fill in each $2 \times 2$ block independently with either a vertical pair or a horizontal pair of dominos, with probabilities  
    \begin{align*}
        \text{ vertical: } & \;\; \frac{1}{1 + c_i b_{N-i+2} }\\
        \text{ horizontal: } & \;\; \frac{c_i b_{N-i+2}}{1 + c_i b_{N-i+2}}
    \end{align*}
    where here $2 i - 1$ is the diagonal of the lower left square in the block, with respect to the indexing of diagonals on the rank-$(N+1)$ Aztec diamond.
    
\end{enumerate}

The correspondence of the shuffling algorithm to the particle process is well known~\cite{borodin2015random}. The following facts, from which the equivalence of shuffling and the particle transition probabilities can be deduced, will be useful for us in the next section:
\begin{itemize}

    \item Slides west correspond to a particle being forced to stay. 
    \item Slides north correspond to a particle being forced to jump.
    \item Creations correspond to a particle which can either stay or jump.
\end{itemize}
For an example, see the red tiling and red particles in Figure \ref{fig:example}.

\subsection{Interpretation in terms of Dominos: LLT Case}
We now state the analogous shuffling algorithm corresponding to the LLT particle process.
\begin{thm}\label{thm:main}
The following algorithm generates a random $k$-tiling of the rank-$N$ Aztec diamond with probability proportional to its weight. 

\bigskip
\noindent \textbf{Algorithm}: Start with a rank-0 Aztec diamond. To get from a rank-$(T-1)$ to rank-$T$ $k$-tiling,
\begin{enumerate}
    \item Slide and destroy as in the normal domino shuffle, independently for each color.
    \item Fill in empty $2\times 2$ squares according to the rule:
    \begin{enumerate}
        \item For the smallest color put a horizontal pair of dominoes with probability 
        $$\frac{c_i b_{N-i+2} t^{\#_1(1)}}{1+c_i b_{N-i+2} t^{\#_1(1)}}$$ 
        where 
        \[
        \#_1(l) = \# \text{ colors $m>l$ that locally have } 
\resizebox{0.08\textwidth}{!}{
\begin{tikzpicture}[baseline = (current bounding box).center]
\draw[] (0,0) rectangle (1,1); \draw[] (1,1) rectangle (2,2);
\draw[fill=lightgray] (0,1) rectangle (1,2); \draw[fill=lightgray] (1,0) rectangle (2,1);  
\draw[very thick, red] (0.05,0.05) rectangle (1.05,2.05);
\end{tikzpicture}}
\text{, } 
\resizebox{0.08\textwidth}{!}{
\begin{tikzpicture}[baseline = (current bounding box).center]
\draw[] (0,0) rectangle (1,1); \draw[] (1,1) rectangle (2,2);
\draw[fill=lightgray] (0,1) rectangle (1,2); \draw[fill=lightgray] (1,0) rectangle (2,1);  
\draw[very thick, red] (0.05,0.05) rectangle (2.05,1.05);
\end{tikzpicture}}
\text{, or creation.}
        \]
        where here $2 i$ is the diagonal of the lower left square in the block, otherwise put a vertical pair. Do all of these first.
        \item Now do all the larger colors from smallest to largest. For color $l>1$, put a horizontal pair of dominoes with probability 
        $$\frac{c_i b_{N-i+2} t^{\#_1(l)+\#_2(l)}}{1+c_i b_{N-i+2} t^{\#_1(l)+\#_2(l)}}$$
        where 
        \[
        \#_2(l) = \# \text{ colors $m<l$ that locally have } 
\resizebox{0.12\textwidth}{!}{
\begin{tikzpicture}[baseline = (current bounding box).center]
\draw[] (0,0) rectangle (1,1); \draw[] (1,1) rectangle (2,2);
\draw[fill=lightgray] (0,1) rectangle (1,2); \draw[fill=lightgray] (1,0) rectangle (2,1);  
\draw[very thick, blue] (1,1) rectangle (3,2);
\end{tikzpicture}}
\text{ or } 
\resizebox{0.08\textwidth}{!}{
\begin{tikzpicture}[baseline = (current bounding box).center]
\draw[] (0,0) rectangle (1,1); \draw[] (1,1) rectangle (2,2);
\draw[fill=lightgray] (0,1) rectangle (1,2); \draw[fill=lightgray] (1,0) rectangle (2,1);  
\draw[very thick, blue] (1,1) rectangle (2,3);
\end{tikzpicture}}
        \]
    and $\#_1(l)$ is as in part (a), otherwise put a vertical pair.
    \end{enumerate}

    This gives a $k$-tiling of the Aztec diamond whose rank has increased by one. 
\end{enumerate}
Repeat steps (2) and (3) until you get a rank-$N$ Aztec diamond.
\end{thm}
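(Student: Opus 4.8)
The plan is to obtain Theorem~\ref{thm:main} by transporting the LLT particle dynamics of Proposition~\ref{prop:probs} through the bijection between $k$-tilings and colored interlacing particle arrays. Three ingredients do almost all of the work: Proposition~\ref{prop:LLTaztec}, which says that under the bijection the weight $t^{\#\mathrm{int}}\prod_i\mathrm{wt}(T^{(i)})$ of a rank-$N$ $k$-tiling is exactly the LLT process~\eqref{eqn:LLTprocess} with parameters $C_N,B_N$; Proposition~\ref{prop:probs}, which says that the LLT parallel update (which by Proposition~\ref{prop:LLTcase} carries the rank-$N$ LLT process to the rank-$(N{+}1)$ one) is realized by the explicit colored-particle dynamics; and the fact that the rank-$0$ diamond is the deterministic empty tuple of Maya diagrams. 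Granting these, it suffices to check that \emph{one step of the algorithm in Theorem~\ref{thm:main} is the image, under the bijection, of one step of the Proposition~\ref{prop:probs} dynamics}; the theorem then follows by induction on the rank starting from rank $0$.

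Step one of the algorithm (``slide and destroy as in the usual domino shuffle, independently for each color'') is identified with the deterministic part of the particle dynamics, namely $\tilde{\mathbf y}^{(n)}=\mathbf x^{(n-1)}$ together with the forced jumps and stays ($\bm x^{(n,l)}_i=\tilde{\mathbf y}^{(n,l)}_i-1\Rightarrow$ jump with probability $1$; $\bm x^{(n,l)}_i=\tilde{\mathbf y}^{(n,l)}_{i-1}-1\Rightarrow$ stay with probability $1$). For a single color this is precisely the particle description of slide-and-destroy recalled in Section~\ref{subsec:SchurShuffle} (slides west $\leftrightarrow$ forced stay, slides north $\leftrightarrow$ forced jump, with destruction automatic), and since the dynamics acts on the colors independently in this step the two match color by color. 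After this step the particles with no forced move are in bijection with the empty $2\times2$ blocks of the partial tiling, one block per color, with ``particle jumps right'' $\leftrightarrow$ ``fill the block with two horizontal dominoes'' and ``particle stays'' $\leftrightarrow$ ``fill with two vertical dominoes''; so it remains to match, for each such block, the particle jump probability with the creation probability of Theorem~\ref{thm:main}.

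For a non-forced jump of particle $i$ of color $l$ at level $n$, Proposition~\ref{prop:probs} gives jump probability $\tfrac{c_n b_{N-n+2}t^{\#_i(n,l)}}{1+c_n b_{N-n+2}t^{\#_i(n,l)}}$, so two things must be checked. First, that the prefactor $c_n b_{N-n+2}$ is the weight attached to the relevant diagonal of the block --- a bookkeeping check against the diagonal-indexing conventions of Section~\ref{subsec:ADT} and the relabelling of diagonals when the rank increases, identical to the one-color case of Section~\ref{subsec:SchurShuffle}. Second, and more substantially, that $\#_i(n,l)=\#_1(l)+\#_2(l)$ with $\#_1(l),\#_2(l)$ the local-pattern counts in the statement. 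Here one rewrites each inequality in~\eqref{eqn:tpow} using $\tilde{\mathbf y}^{(n,m)}=\mathbf x^{(n-1,m)}$ and translates it through the particle--domino dictionary: for $m>l$ the condition $\tilde{\bm y}^{(n,m)}_j\le\bm x^{(n,l)}_i\le\bm x^{(n,m)}_j$ for some $j$ is equivalent to color $m$ exhibiting, next to the $2\times2$ block of color $l$, one of the patterns ``vertical red domino'', ``horizontal red domino'', or ``creation'' listed for $\#_1(l)$ --- in particular the count does not see which way a larger color's own block is eventually filled, which is exactly why the ``or creation'' clause appears and why larger colors need not be processed before color $l$; while for $m<l$ the shifted condition $\tilde{\bm y}^{(n,m)}_j\le\bm x^{(n,l)}_i+1\le\bm x^{(n,m)}_j$ is equivalent to one of the two blue patterns listed for $\#_2(l)$, which can only be read off \emph{after} color $m$'s block has been filled. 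This is what dictates the processing order in the algorithm (smallest color first, then the rest from smallest to largest); the apparent sequential dependence is a bookkeeping artifact of how $\#_2$ refers to other colors' resolved blocks --- conditionally on the partial tiling, the block fillings are independent across $i,n,l$, exactly as in Proposition~\ref{prop:probs}.

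The main obstacle is this last translation: the exhaustive identification of the particle-position inequalities in~\eqref{eqn:tpow} with the local domino patterns of Theorem~\ref{thm:main}, and pinning down precisely at which stage of the algorithm (partial tiling versus partially filled tiling) each pattern is to be read. This is where the ``$+1$'' asymmetry between smaller and larger colors --- inherited from the asymmetry between inversion and coinversion triples visible in Lemma~\ref{lem:tpow} and in the casework of the proof of Proposition~\ref{prop:probs} --- forces both the ``or creation'' clause and the color ordering. Everything else is either an immediate consequence of Propositions~\ref{prop:LLTaztec}, \ref{prop:LLTcase}, \ref{prop:probs}, or the same diagonal-weight bookkeeping already present in the one-color shuffle.
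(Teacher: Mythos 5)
Your proposal is correct and follows essentially the same route as the paper's proof: reduce to matching the algorithm's update with the particle dynamics of Proposition~\ref{prop:probs}, identify slides/destructions with forced moves and creations with free jumps color by color, and then verify that $\#_1(l)+\#_2(l)$ equals $\#_i(n,l)$ by translating the inequalities in~\eqref{eqn:tpow} into local domino patterns (the paper carries out this last translation via the case enumerations in Figures~\ref{fig:jumpcases1} and~\ref{fig:jumpcases2}). Your observations about the $+1$ asymmetry forcing the ``or creation'' clause and the smallest-color-first processing order are exactly the points the paper's case analysis is designed to capture.
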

\begin{proof}
It suffices to show that the update step corresponds to the update of tuples of interlacing arrays of particles described in Proposition \ref{prop:probs}. For each color, the transition probabilities only differ from those of the usual domino shuffling algorithm through the creation probabilities. As in the one color domino shuffling, creations correspond to particles which can jump or stay, with creation of a horizontal pair corresponding to a particle jumping by $1$ and creation of a vertical pair corresponding to staying. It is enough to show that the power of $t$ in the creation step described above corresponds to the power of $t$ in the jump probability of the particle process.

Consider a color $l$, and a larger color $m$, which we represent by blue and red, respectively. Suppose that we are doing a creation for color $l$ and in that $2 \times 2$ box the red configuration looks like one of the three configurations given in the definition of $\#_1$ in the theorem statement. Given these domino configurations, we can give the possibilities for the corresponding red and blue particle configurations. These are shown in Figure \ref{fig:jumpcases1}. Note that in each case the particle configuration contributes a power of $t$ to the blue particle's jump probability in \eqref{eqn:tpow}. On the other hand, it is also clear from Figure~\ref{fig:jumpcases1} that these are all of the possible cases in which red particles contribute a power of~$t$ to the blue particle's jump probability in \eqref{eqn:tpow}.

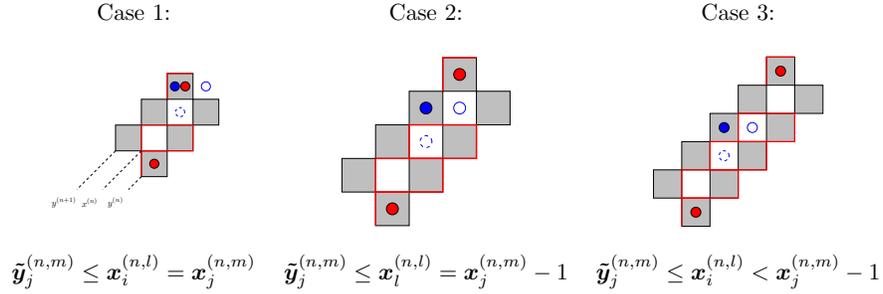
\begin{figure}[!htb]
\begin{center}
\resizebox{\textwidth}{!}{
\begin{tabular}{ccc}
Case 1: & Case 2: & Case 3:
\\
\\
\resizebox{0.22\textwidth}{!}{
\begin{tikzpicture}[baseline = (current bounding box).center]
\draw[fill=lightgray] (0,0) rectangle (1,1); 
\draw[fill=lightgray] (-1,1) rectangle (0,2); \draw[fill=lightgray] (1,1) rectangle (2,2);
\draw[fill=lightgray] (0,2) rectangle (1,3); \draw[fill=lightgray] (2,2) rectangle (3,3);
\draw[fill=lightgray] (1,3) rectangle (2,4); 
\draw[fill=red] (0.5,0.5) circle (5pt);
\draw[fill=blue] (1.3,3.5) circle (5pt);
\draw[fill=red] (1.7,3.5) circle (5pt);
\draw[blue,dashed,fill=none] (1.5,2.5) circle (5pt);\draw[blue,fill=none] (2.5,3.5) circle (5pt);
\draw[red, very thick] (0,0)--(0,1)--(1,1);
\draw[red, very thick] (0,1) rectangle (2,2);
\draw[red, very thick] (1,3)--(1,4)--(2,4);
\draw[dashed] (0,0)--(-0.5,-0.5); \draw[dashed] (0,1)--(-1.5,-0.5); \draw[dashed] (-1,1)--(-2.5,-0.5);
\node at (-1,-1) {$y^{(n)}$}; \node at (-2,-1) {$x^{(n)}$}; \node at (-3,-1) {$y^{(n+1)}$};
\end{tikzpicture}
} &
\resizebox{0.22\textwidth}{!}{
\begin{tikzpicture}[baseline = (current bounding box).center]
\draw[fill=lightgray] (0,0) rectangle (1,1); 
\draw[fill=lightgray] (-1,1) rectangle (0,2); \draw[fill=lightgray] (1,1) rectangle (2,2);
\draw[fill=lightgray] (0,2) rectangle (1,3); \draw[fill=lightgray] (2,2) rectangle (3,3);
\draw[fill=lightgray] (1,3) rectangle (2,4); \draw[fill=lightgray] (3,3) rectangle (4,4);
\draw[fill=lightgray] (2,4) rectangle (3,5); 
\draw[fill=red] (0.5,0.5) circle (5pt);
\draw[fill=blue] (1.5,3.5) circle (5pt);
\draw[fill=red] (2.5,4.5) circle (5pt);
\draw[blue,dashed,fill=none] (1.5,2.5) circle (5pt);\draw[blue,fill=none] (2.5,3.5) circle (5pt);
\draw[red, very thick] (0,0)--(0,1)--(1,1);
\draw[red, very thick] (0,1) rectangle (2,2); \draw[red, very thick] (1,2) rectangle (3,3);
\draw[red, very thick] (2,4)--(2,5)--(3,5);
\end{tikzpicture}
} &
\resizebox{0.22\textwidth}{!}{
\begin{tikzpicture}[baseline = (current bounding box).center]
\draw[fill=lightgray] (0,0) rectangle (1,1); 
\draw[fill=lightgray] (-1,1) rectangle (0,2); \draw[fill=lightgray] (1,1) rectangle (2,2);
\draw[fill=lightgray] (0,2) rectangle (1,3); \draw[fill=lightgray] (2,2) rectangle (3,3);
\draw[fill=lightgray] (1,3) rectangle (2,4); \draw[fill=lightgray] (3,3) rectangle (4,4);
\draw[fill=lightgray] (2,4) rectangle (3,5); \draw[fill=lightgray] (4,4) rectangle (5,5);
\draw[fill=lightgray] (3,5) rectangle (4,6);
\draw[fill=red] (0.5,0.5) circle (5pt);
\draw[fill=blue] (1.5,3.5) circle (5pt);
\draw[fill=red] (3.5,5.5) circle (5pt);
\draw[blue,dashed,fill=none] (1.5,2.5) circle (5pt);\draw[blue,fill=none] (2.5,3.5) circle (5pt);
\draw[red, very thick] (0,0)--(0,1)--(1,1);
\draw[red, very thick] (0,1) rectangle (2,2); \draw[red, very thick] (1,2) rectangle (3,3); \draw[red, very thick] (2,3) rectangle (4,4);
\draw[red, very thick] (3,5)--(3,6)--(4,6);
\end{tikzpicture}
}
\\
\\
$\bm{\tilde{y}}^{(n, m)}_{j}\le \bm{x}^{(n, l)}_{i} = \bm{x}^{(n, m)}_{j}$ & $\bm{\tilde{y}}^{(n,m )}_{j} \le \bm{x}^{(n, l)}_{l} = \bm{x}^{(n, m)}_{j}-1$ & $\bm{\tilde{y}}^{(n, m)}_{j} \le \bm{x}^{(n, l)}_{i} < \bm{x}^{(n, m)}_{j}-1$
\end{tabular}
}
\end{center}
\caption{Three cases in which the dominos of a larger color $m$ (in red) contribute a factor of $t$ to the creation probabilities of a smaller color $l$ (in blue). In each case we also show the relative positions of the blue particle, which is jumping, with the nearby red particles whose positions lead to the contribution of the power of~$t$. We use the same notation as in Proposition~\ref{prop:probs}; in particular,~$\bm{x}^{(n, l)}_{i}$ and~$\bm{x}^{(n, m)}_{j}$ denote the blue and red particles in the~$x^{(n)}$ diagonal before the update. By the update rules for $\bm{y}$ particles (c.f. Proposition~\ref{prop:probs}), these agree with the particle positions~$\bm{\tilde{y}}^{(n+1,l)}_{i}$ and 
$ \bm{\tilde{y}}^{(n+1,m)}_{j}$ in the~$y^{(n+1)}$ diagonal after the update, which are shown in the figure. Note that the figures are still valid if~$j = n$; in this case the bottom most red particle would be off of the Aztec diamond. The dashed blue circle indicates where the blue particle would be if it does not jump, the solid blue circle indicates where the particle would be if it jumped.} 
\label{fig:jumpcases1}
\end{figure}

Now consider a color $l$, and a smaller color $m$, which we represent by red and blue, respectively (so again blue is the smaller color, but now red is the color whose transition probability we are discussing). Suppose that we are doing a creation for the red dominos in a $2 \times 2$ box, and that the blue dominos there look like one of the two configurations in the definition of $\#_2$ in the theorem statement. Then the possibilities for the corresponding red and blue particle configurations are shown in Figure \ref{fig:jumpcases2}. Again we see that one of these cases occurs if and only if the red particle's transition probability obtains a power of~$t$ from blue in \eqref{eqn:tpow}.

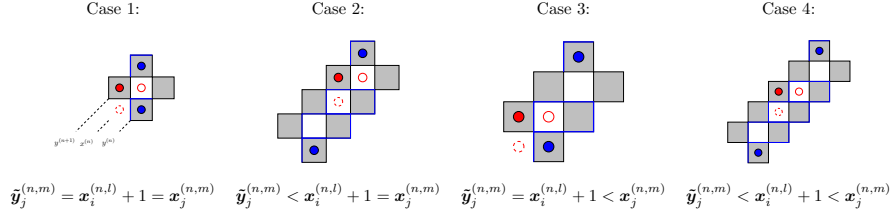
\begin{figure}[!htb]
\begin{center}
\resizebox{\textwidth}{!}{
\begin{tabular}{cccc}
Case 1: & Case 2: & Case 3: & Case 4:
\\
\\
\resizebox{0.22\textwidth}{!}{
\begin{tikzpicture}[baseline = (current bounding box).center]
\draw[fill=lightgray] (0,0) rectangle (1,1); 
\draw[fill=lightgray] (-1,1) rectangle (0,2); \draw[fill=lightgray] (1,1) rectangle (2,2);
\draw[fill=lightgray] (0,2) rectangle (1,3); 
\draw[fill=blue] (0.5,0.5) circle (5pt);
\draw[fill=blue] (0.5,2.5) circle (5pt);
\draw[fill=red] (-0.5,1.5) circle (5pt);
\draw[red,dashed,fill=none] (-0.5,0.5) circle (5pt);\draw[red,fill=none] (0.5,1.5) circle (5pt);
\draw[blue, ultra thick] (0,0)--(0,1)--(1,1);
\draw[blue, ultra thick] (0,2)--(0,3)--(1,3);
\draw[dashed] (0,0)--(-0.5,-0.5); \draw[dashed] (-1,0)--(-1.5,-0.5); \draw[dashed] (-1,1)--(-2.5,-0.5);
\node at (-1,-1) {$y^{(n)}$}; \node at (-2,-1) {$x^{(n)}$}; \node at (-3,-1) {$y^{(n+1)}$};
\end{tikzpicture}
} &
\resizebox{0.22\textwidth}{!}{
\begin{tikzpicture}[baseline = (current bounding box).center]
\draw[fill=lightgray] (0,0) rectangle (1,1); 
\draw[fill=lightgray] (-1,1) rectangle (0,2); \draw[fill=lightgray] (1,1) rectangle (2,2);
\draw[fill=lightgray] (0,2) rectangle (1,3); \draw[fill=lightgray] (2,2) rectangle (3,3);
\draw[fill=lightgray] (1,3) rectangle (2,4); \draw[fill=lightgray] (3,3) rectangle (4,4);
\draw[fill=lightgray] (2,4) rectangle (3,5); 
\draw[fill=blue] (0.5,0.5) circle (5pt);
\draw[fill=red] (1.5,3.5) circle (5pt);
\draw[fill=blue] (2.5,4.5) circle (5pt);
\draw[red,dashed,fill=none] (1.5,2.5) circle (5pt);\draw[red,fill=none] (2.5,3.5) circle (5pt);
\draw[blue, very thick] (0,0)--(0,1)--(1,1);
\draw[blue, very thick] (0,1) rectangle (2,2); \draw[blue, very thick] (1,2) rectangle (3,3);
\draw[blue, very thick] (2,4)--(2,5)--(3,5);
\end{tikzpicture}
} &
\resizebox{0.22\textwidth}{!}{
\begin{tikzpicture}[baseline = (current bounding box).center]
\draw[fill=lightgray] (0,0) rectangle (1,1); 
\draw[fill=lightgray] (-1,1) rectangle (0,2); \draw[fill=lightgray] (1,1) rectangle (2,2);
\draw[fill=lightgray] (0,2) rectangle (1,3); \draw[fill=lightgray] (2,2) rectangle (3,3);
\draw[fill=lightgray] (1,3) rectangle (2,4);
\draw[fill=blue] (0.5,0.5) circle (5pt);
\draw[fill=red] (-0.5,1.5) circle (5pt);
\draw[fill=blue] (1.5,3.5) circle (5pt);
\draw[red,dashed,fill=none] (-0.5,0.5) circle (5pt);\draw[red,fill=none] (0.5,1.5) circle (5pt);
\draw[blue, very thick] (0,0)--(0,1)--(1,1);
\draw[blue, very thick] (0,1) rectangle (2,2);
\draw[blue, very thick] (1,3)--(1,4)--(2,4);
\end{tikzpicture}
} &
\resizebox{0.22\textwidth}{!}{
\begin{tikzpicture}[baseline = (current bounding box).center]
\draw[fill=lightgray] (0,0) rectangle (1,1); 
\draw[fill=lightgray] (-1,1) rectangle (0,2); \draw[fill=lightgray] (1,1) rectangle (2,2);
\draw[fill=lightgray] (0,2) rectangle (1,3); \draw[fill=lightgray] (2,2) rectangle (3,3);
\draw[fill=lightgray] (1,3) rectangle (2,4); \draw[fill=lightgray] (3,3) rectangle (4,4);
\draw[fill=lightgray] (2,4) rectangle (3,5); \draw[fill=lightgray] (4,4) rectangle (5,5);
\draw[fill=lightgray] (3,5) rectangle (4,6);
\draw[fill=blue] (0.5,0.5) circle (5pt);
\draw[fill=red] (1.5,3.5) circle (5pt);
\draw[fill=blue] (3.5,5.5) circle (5pt);
\draw[red,dashed,fill=none] (1.5,2.5) circle (5pt);\draw[red,fill=none] (2.5,3.5) circle (5pt);
\draw[blue, very thick] (0,0)--(0,1)--(1,1);
\draw[blue, very thick] (0,1) rectangle (2,2); \draw[blue, very thick] (1,2) rectangle (3,3); \draw[blue, very thick] (2,3) rectangle (4,4);
\draw[blue, very thick] (3,5)--(3,6)--(4,6);
\end{tikzpicture}
}
\\
\\
$\bm{\tilde{y}}^{(n, m)}_{j} = \bm{x}^{(n, l)}_{i}+1 = \bm{x}^{(n, m)}_{j}$ & $\bm{\tilde{y}}^{(n, m)}_{j} < \bm{x}^{(n, l)}_{i}+1 = \bm{x}^{(n, m)}_{j}$ & $\bm{\tilde{y}}^{(n, m)}_{j} = \bm{x}^{(n, l)}_{i}+1 < \bm{x}^{(n, m)}_{j}$ & $\bm{\tilde{y}}^{(n, m)}_{j} < \bm{x}^{(n, l)}_{i}+1 < \bm{x}^{(n, m)}_{j}$
\end{tabular}
}
\end{center}
\caption{Four cases in which the smaller color $m$ (in blue) contributes a factor of $t$ to the creation probability of a larger color $l$ (in red). Similarly to Figure~\ref{fig:jumpcases1}, we use~$\bm{x}_i^{(n,l)}, \bm{x}_j^{(n,m)}$ to denote particle positions of the $x^{(n)}$ diagonal at the previous time step, which agree with the particle positions~$\bm{\tilde{y}}^{(n+1)}$ of the~$y^{(n+1)}$ diagonal at the next time step. The dashed red circle indicates where the red particle would be if it does not jump, the solid red circle indicates where the particle would be if it jumped. } 
\label{fig:jumpcases2}
\end{figure}

As a result, the powers of $t$ that we pick up from $\#_1(l) + \#_2(l)$ are exactly the powers of $t$ described in Proposition \ref{prop:probs}.

\end{proof}

\section{Conclusion}\label{sec:conclusion}
In this article we generalize the domino shuffling algorithm for tilings of the Aztec diamond to shuffling algorithm for interacting $k$-tilings studied in \cite{LLTaztec}. We present this algorithm in a variety of ways:
\begin{enumerate}
    \item As a Markov chain on LLT processes, which generalize the well-studied Schur processes.
    \item As a sequence of local moves on the dominos themselves.
    \item And through a generalization of the `spider move' on the underlying dimer models (see Appendix \ref{sec:genShuf}).
\end{enumerate}
It is intriguing that despite the increased complexity of the $k$-tilings compared to standard tilings of the Aztec diamond, the shuffling algorithm remains very familiar. In terms of moves on the dominos, the increase in complexity is confined only to how new pairs of dominos are created. This has interesting combinitorial consequences. In fact, it can be used to give a alternate proof that the partition function of the $k$-tilings of rank $N$ is given by $Z^{(k)}_{AD} = \left(2(1+t)\right)^{\binom{N+1}{2}}$.

Using this shuffling algorithm allows fast sampling of the $k$-tilings.  In simulations one finds that these $k$-tilings appear to show very interesting asymptotic features, including the presence of arctic curves and limit shapes. See Appendix \ref{sec:sim}. One might hope that, just as for the standard tilings of the Aztec diamond, the shuffling algorithm may in the future be useful in the study of this asymptotic behavior.


\bibliographystyle{abbrv}
\bibliography{shuffle}

\begin{thebibliography}{10}

\bibitem{color1}
A.~Aggarwal, A.~Borodin, and M.~Wheeler.
\newblock Colored fermionic vertex models and symmetric functions.
\newblock {\em arXiv:2101.01605}, 2021.

\bibitem{Ferrari2008}
A.~Borodin and P.~L. Ferrari.
\newblock {Anisotropic growth of random surfaces in 2+1 dimensions}.
\newblock {\em Commun. Math. Phys.}, 325:603--684, 2014.
\newblock arXiv:0804.3035 [math-ph].

\bibitem{borodin2015random}
A.~Borodin and P.~L. Ferrari.
\newblock Random tilings and {M}arkov chains for interlacing particles.
\newblock {\em Markov Process. Related Fields}, 24(3):419--451, 2018.

\bibitem{BorodinGorinSPB12}
A.~Borodin and V.~Gorin.
\newblock Lectures on integrable probability.
\newblock In {\em Probability and Statistical Physics in St.\ Petersburg},
  volume~91 of {\em Proceedings of Symposia in Pure Mathematics}, pages
  155--214. AMS, 2016.
\newblock arXiv:1212.3351 [math.PR].

\bibitem{RYG}
C.~Boutillier, J.~Bouttier, G.~Chapuy, S.~Corteel, and S.~Ramassamy.
\newblock Dimers on rail yard graphs.
\newblock {\em Ann. Inst. Henri Poincar\'{e} D}, 4(4):479--539, 2017.

\bibitem{steep}
J.~Bouttier, G.~Chapuy, and S.~Corteel.
\newblock From {A}ztec diamonds to pyramids: steep tilings.
\newblock {\em Trans. Amer. Math. Soc.}, 369(8):5921--5959, 2017.

\bibitem{LLTaztec}
S.~Corteel, A.~Gitlin, and D.~Keating.
\newblock Colored vertex models and $k$-tilings of the aztec diamond.
\newblock {\em arXiv:2202.06020 [math.CO]}, 2022.

\bibitem{LLT}
S.~Corteel, A.~Gitlin, D.~Keating, and J.~Meza.
\newblock A vertex model for {LLT} polynomials.
\newblock {\em International Mathematics Research Notices}, rnab165, 2021.

\bibitem{CFsuper}
M.~J. Curran, C.~Frechette, C.~Yost-Wolff, S.~W. Zhang, and V.~Zhang.
\newblock A lattice model for super {LLT} polynomials.
\newblock {\em arXiv:2110.07597}, 2021.

\bibitem{DiaconisFill1990}
P.~Diaconis and J.~Fill.
\newblock Strong stationary times via a new form of duality.
\newblock {\em Ann. Probab.}, 18:1483--1522, 1990.

\bibitem{aztec0}
N.~Elkies, G.~Kuperberg, M.~Larsen, and J.~Propp.
\newblock Alternating-sign matrices and domino tilings. {I}.
\newblock {\em J. Algebraic Combin.}, 1(2):111--132, 1992.

\bibitem{GKsuper}
A.~Gitlin and D.~Keating.
\newblock A vertex model for supersymmetric {LLT} polynomials.
\newblock {\em arXiv:2110.10273}, 2021.

\bibitem{GoncharovKenyon2011DimersClusterIntSys}
A.~B. Goncharov and R.~Kenyon.
\newblock Dimers and cluster integrable systems.
\newblock {\em Ann. Sci. \'{E}c. Norm. Sup\'{e}r. (4)}, 46(5):747--813, 2013.

\bibitem{shuffling2}
E.~{Janvresse}, T.~{de la Rue}, and Y.~{Velenik}.
\newblock {A note on domino shuffling}.
\newblock {\em {Electron. J. Comb.}}, 13(1):Research Paper R30, 20, 2006.

\bibitem{aztec3}
W.~Jockusch, J.~Propp, and P.~Shor.
\newblock Random domino tilings and the arctic circle theorem.
\newblock {\em arXiv:9801068}, 1998.

\bibitem{johansson2}
K.~Johansson.
\newblock Non-intersecting paths, random tilings and random matrices.
\newblock {\em Probab. Theory Related Fields}, 123(2):249--260, 2002.

\bibitem{kenyon2003introduction}
R.~Kenyon.
\newblock An introduction to the dimer model.
\newblock In {\em School and {C}onference on {P}robability {T}heory}, ICTP
  Lect. Notes, XVII, pages 267--304. Abdus Salam Int. Cent. Theoret. Phys.,
  Trieste, 2004.

\bibitem{LLT1997}
A.~Lascoux, B.~Leclerc, and J.-Y. Thibon.
\newblock Ribbon tableaux, {H}all--{L}ittlewood functions, quantum affine
  algebras, and unipotent varieties.
\newblock {\em J. Math. Phys.}, 38(2):1041--1068, 1997.

\bibitem{macdonald1998symmetric}
I.~G. Macdonald.
\newblock {\em Symmetric functions and {H}all polynomials}.
\newblock Oxford University Press, 1998.

\bibitem{nord}
E.~Nordenstam.
\newblock On the shuffling algorithm for domino tilings.
\newblock {\em Electron. J. Probab.}, 15(3):75--95, 2010.
\newblock arXiv:0802.2592 [math.PR].

\bibitem{okounkov2003correlation}
A.~Okounkov and N.~Reshetikhin.
\newblock {Correlation function of Schur process with application to local
  geometry of a random 3-dimensional Young diagram}.
\newblock {\em Jour. AMS}, 16(3):581--603, 2003.
\newblock arXiv:math/0107056 [math.CO].

\bibitem{shuffling1}
J.~{Propp}.
\newblock {Generalized domino-shuffling.}
\newblock {\em {Theor. Comput. Sci.}}, 303(2-3):267--301, 2003.

\end{thebibliography}

\appendix
\addcontentsline{toc}{section}{Appendices}

\section{Generalized spider move} \label{sec:genShuf}

\begin{figure}[ht!]
    \centering
    \includegraphics[scale=.8]{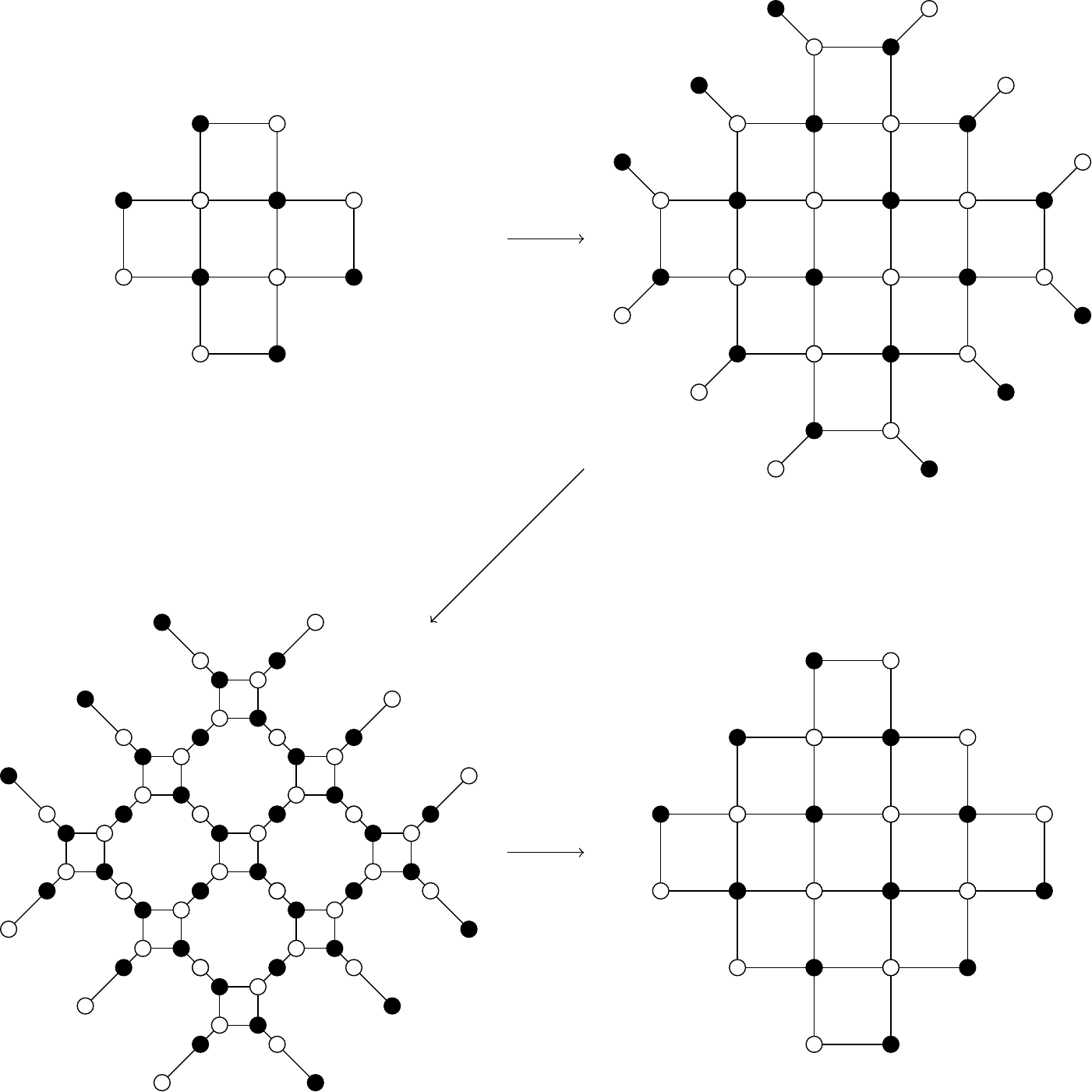}
    \caption{Starting with an Aztec diamond of rank~$2$ the above local moves construct an Aztec diamond of rank~$3$. The first transformation is a boundary decoration, the second consists of a collection of spider moves, and the final transformation is the contraction of degree two vertices. The randomization of these local moves leads to the Markov chain on dimer configurations known as the shuffling algorithm.}
    \label{fig:square_shuffling}
\end{figure}

\begin{figure}[ht!]
    \centering
    \includegraphics[scale=.8]{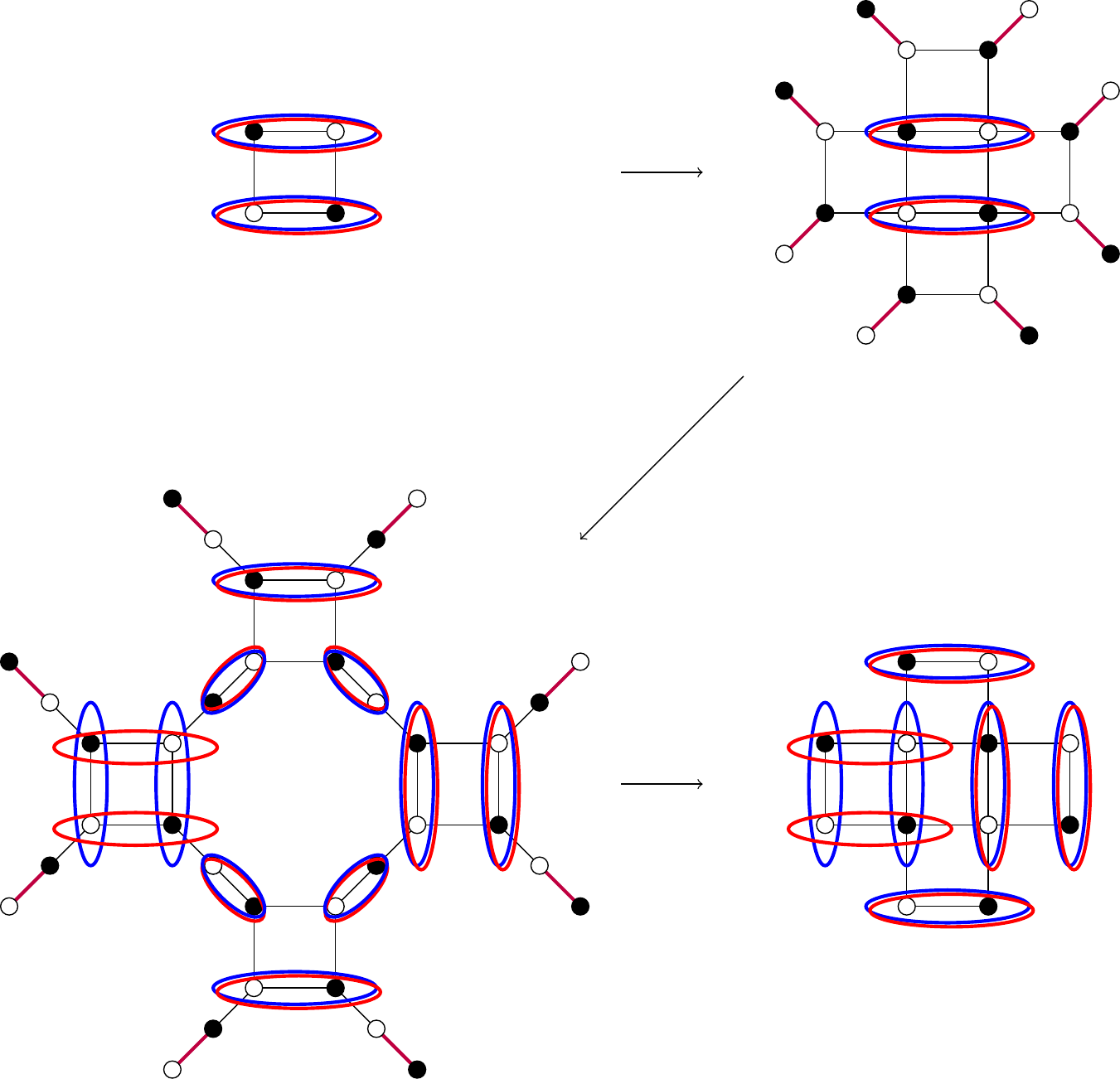}
    \caption{Starting with a double dimer configuration on the Aztec diamond of rank~$1$, the shuffling algorithm produces one on an Aztec diamond of rank~$2$. The weight of the configuration before the shuffling can be computed as the product over cells of cell weights in the top right picture, using the local interactions listed in~\eqref{eqn:local_before}. This gives a weight of~$t$. The weight of the configuration after the shuffling can be computed as the product of the local interactions listed in~\eqref{eqn:local_after} over the cells in the bottom left picture; this also gives a total weight of~$t$. One can check that for both the~$N = 1$ and~$N =2$ domino tiling corresponding to the top left and bottom right configurations, the weight is indeed~$t$, c.f. the domino interactions in Section~\ref{sec:couples_tilings}. }
    \label{fig:square_shuffling2}
\end{figure}

Here we will sketch an alternate proof of the correctness of the shuffling algorithm, Theorem~\ref{thm:basic}, using a generalization of the `spider move' on the underlying dimer model. See \cite{shuffling1, GoncharovKenyon2011DimersClusterIntSys} for details on the shuffling algorithm and the spider move in the one color case. Throughout this section we will assume the reader is familiar with the bijection between domino tilings and dimer covers, see~\cite{kenyon2003introduction}. For simplicity we will only consider the case where we have two colors, blue and red, with blue smaller than red.

We call a~\emph{cell} a face of the Aztec diamond which has a white vertex in the top left, as shown below
\begin{center}
\begin{tikzpicture}[baseline = (current bounding box).center]
\draw (0,0) rectangle (1,1);
\draw[dashed] (0,0)--(-0.5,-0.5);
\draw[dashed] (1,0)--(1.5,-0.5);
\draw[dashed] (0,1)--(-0.5,1.5);
\draw[dashed] (1,1)--(1.5,1.5);
\draw[fill=black] (0,0) circle (3pt);
\draw[fill=black] (1,1) circle (3pt);
\draw[fill=white] (1,0) circle (3pt);
\draw[fill=white] (0,1) circle (3pt);
\node[above] at (0.5,1) {$a$};
\node[right] at (1,0.5) {$b$};
\node[below] at (0.5,0) {$c$};
\node[left] at (0,0.5) {$d$};
\end{tikzpicture}.
\end{center}
In what follows, when we show a small patch of a graph, dashed lines mean the patch is a portion of a larger graph. The labels indicate the weight assigned to a dimer that occupies the corresponding edge. When it is clear from the context, we sometimes also use the term `cell' to refer to both the patch of graph and the dimers occupying edges of that face in a dimer configuration. As we are considering double dimer configurations, we will color the dimers red and blue to distinguish the two configurations.  

Define interactions to be local configurations of the form
\begin{equation}\label{eqn:local_before}
\begin{tabular}{ccc}
\begin{tikzpicture}[baseline = (current bounding box).center]
\draw (0,0) rectangle (1,1);
\draw[dashed] (0,0)--(-0.5,-0.5);
\draw[dashed] (1,0)--(1.5,-0.5);
\draw[dashed] (0,1)--(-0.5,1.5);
\draw[dashed] (1,1)--(1.5,1.5);
\draw[fill=black] (0,0) circle (3pt);
\draw[fill=black] (1,1) circle (3pt);
\draw[fill=white] (1,0) circle (3pt);
\draw[fill=white] (0,1) circle (3pt);
\draw[very thick,blue] (0.5,1) ellipse (1cm and 0.2cm);
\draw[very thick,red] (0.55,0.95) ellipse (1cm and 0.2cm);
\end{tikzpicture}
&
\begin{tikzpicture}[baseline = (current bounding box).center]
\draw (0,0) rectangle (1,1);
\draw[dashed] (0,0)--(-0.5,-0.5);
\draw[dashed] (1,0)--(1.5,-0.5);
\draw[dashed] (0,1)--(-0.5,1.5);
\draw[dashed] (1,1)--(1.5,1.5);
\draw[fill=black] (0,0) circle (3pt);
\draw[fill=black] (1,1) circle (3pt);
\draw[fill=white] (1,0) circle (3pt);
\draw[fill=white] (0,1) circle (3pt);
\draw[very thick,blue] (1,0.5) ellipse (0.2cm and 1cm);
\draw[very thick,red] (0.55,-0.05) ellipse (1cm and 0.2cm);
\end{tikzpicture}
&
\begin{tikzpicture}[baseline = (current bounding box).center]
\draw (0,0) rectangle (1,1);
\draw[dashed] (0,0)--(-0.5,-0.5);
\draw[dashed] (1,0)--(1.5,-0.5);
\draw[dashed] (0,1)--(-0.5,1.5);
\draw[dashed] (1,1)--(1.5,1.5);
\draw[fill=black] (0,0) circle (3pt);
\draw[fill=black] (1,1) circle (3pt);
\draw[fill=white] (1,0) circle (3pt);
\draw[fill=white] (0,1) circle (3pt);
\draw[very thick,blue] (0.5,1) ellipse (1cm and 0.2cm);
\draw[very thick,red,shift={(-0.25,1.25)},rotate=-45] (0,0) ellipse (0.5cm and 0.2cm);
\end{tikzpicture}
\end{tabular}
\end{equation}
A dimer occupying the dashed edge adjacent to a vertex $v$ denotes that a dimer occupies one of the other edges adjacent $v$ that are not part of the cell.

Note that if we consider the underlying dimer configuration of a $2$-tiling of the Aztec diamond, the product over all cells of~$t^{\text{\# interactions}}$ agrees with the product of the domino interactions defined in Section~\ref{sec:couples_tilings}. See Figure~\ref{fig:square_shuffling2}, top, for an example.

Now consider applying the spider move to the cell above. This results in a new patch of graph with different edge weights
\begin{center}
\begin{tikzpicture}[baseline = (current bounding box).center]
\draw (0,0) rectangle (1,1);
\draw[] (0,0)--(-0.5,-0.5);
\draw[] (1,0)--(1.5,-0.5);
\draw[] (0,1)--(-0.5,1.5);
\draw[] (1,1)--(1.5,1.5);
\draw[dashed] (-0.5,-0.5)--(-1,-1);
\draw[dashed] (1.5,-0.5)--(2,-1);
\draw[dashed] (-0.5,1.5)--(-1,2);
\draw[dashed] (1.5,1.5)--(2,2);
\draw[fill=white] (0,0) circle (3pt);
\draw[fill=white] (1,1) circle (3pt);
\draw[fill=black] (1,0) circle (3pt);
\draw[fill=black] (0,1) circle (3pt);
\draw[fill=black] (-0.5,-0.5) circle (3pt);
\draw[fill=black] (1.5,1.5) circle (3pt);
\draw[fill=white] (1.5,-0.5) circle (3pt);
\draw[fill=white] (-0.5,1.5) circle (3pt);
\node[above] at (0.5,1) {$a'$};
\node[right] at (1,0.5) {$b'$};
\node[below] at (0.5,0) {$c'$};
\node[left] at (0,0.5) {$d'$};
\end{tikzpicture}
\end{center}
where unlabelled edges have weight 1. After doing this local transformation to each cell in the Aztec diamond and then contracting all valence two vertices, one gets an Aztec diamond of one larger rank. We depict this process in Figure~\ref{fig:square_shuffling}. We will also refer to these patches as cells. 

After the spider moves, in the double dimer configuration we now count interactions of the form
\begin{equation}\label{eqn:local_after}
\begin{tabular}{ccc}
\begin{tikzpicture}[baseline = (current bounding box).center]
\draw (0,0) rectangle (1,1);
\draw[] (0,0)--(-0.5,-0.5);
\draw[] (1,0)--(1.5,-0.5);
\draw[] (0,1)--(-0.5,1.5);
\draw[] (1,1)--(1.5,1.5);
\draw[dashed] (-0.5,-0.5)--(-1,-1);
\draw[dashed] (1.5,-0.5)--(2,-1);
\draw[dashed] (-0.5,1.5)--(-1,2);
\draw[dashed] (1.5,1.5)--(2,2);
\draw[fill=white] (0,0) circle (3pt);
\draw[fill=white] (1,1) circle (3pt);
\draw[fill=black] (1,0) circle (3pt);
\draw[fill=black] (0,1) circle (3pt);
\draw[fill=black] (-0.5,-0.5) circle (3pt);
\draw[fill=black] (1.5,1.5) circle (3pt);
\draw[fill=white] (1.5,-0.5) circle (3pt);
\draw[fill=white] (-0.5,1.5) circle (3pt);
\draw[very thick,blue] (0.5,0) ellipse (1cm and 0.2cm);
\draw[very thick,red] (0.55,-0.05) ellipse (1cm and 0.2cm);
\end{tikzpicture}
&
\begin{tikzpicture}[baseline = (current bounding box).center]
\draw (0,0) rectangle (1,1);
\draw[] (0,0)--(-0.5,-0.5);
\draw[] (1,0)--(1.5,-0.5);
\draw[] (0,1)--(-0.5,1.5);
\draw[] (1,1)--(1.5,1.5);
\draw[dashed] (-0.5,-0.5)--(-1,-1);
\draw[dashed] (1.5,-0.5)--(2,-1);
\draw[dashed] (-0.5,1.5)--(-1,2);
\draw[dashed] (1.5,1.5)--(2,2);
\draw[fill=white] (0,0) circle (3pt);
\draw[fill=white] (1,1) circle (3pt);
\draw[fill=black] (1,0) circle (3pt);
\draw[fill=black] (0,1) circle (3pt);
\draw[fill=black] (-0.5,-0.5) circle (3pt);
\draw[fill=black] (1.5,1.5) circle (3pt);
\draw[fill=white] (1.5,-0.5) circle (3pt);
\draw[fill=white] (-0.5,1.5) circle (3pt);
\draw[very thick,blue] (0.5,0) ellipse (1cm and 0.2cm);
\draw[very thick,red] (-0.05,0.55) ellipse (0.2cm and 1cm);
\end{tikzpicture}
&
\begin{tikzpicture}[baseline = (current bounding box).center]
\draw (0,0) rectangle (1,1);
\draw[] (0,0)--(-0.5,-0.5);
\draw[] (1,0)--(1.5,-0.5);
\draw[] (0,1)--(-0.5,1.5);
\draw[] (1,1)--(1.5,1.5);
\draw[dashed] (-0.5,-0.5)--(-1,-1);
\draw[dashed] (1.5,-0.5)--(2,-1);
\draw[dashed] (-0.5,1.5)--(-1,2);
\draw[dashed] (1.5,1.5)--(2,2);
\draw[fill=white] (0,0) circle (3pt);
\draw[fill=white] (1,1) circle (3pt);
\draw[fill=black] (1,0) circle (3pt);
\draw[fill=black] (0,1) circle (3pt);
\draw[fill=black] (-0.5,-0.5) circle (3pt);
\draw[fill=black] (1.5,1.5) circle (3pt);
\draw[fill=white] (1.5,-0.5) circle (3pt);
\draw[fill=white] (-0.5,1.5) circle (3pt);
\draw[very thick,blue,shift={(1.25,1.25)},rotate=45] (0,0) ellipse (0.5cm and 0.2cm);
\draw[very thick,red] (0.55,0.95) ellipse (1cm and 0.2cm);
\end{tikzpicture}
\end{tabular}
\end{equation}
since the product over cells of these interactions will agree with the domino interactions we get after the contraction of degree two vertices. For an example compare the interactions in the configurations in the bottom row of Figure~\ref{fig:square_shuffling2}.

Note that for the usual dimer model, if
\[
a' = \frac{c}{ac+bd}, \qquad b' = \frac{d}{ac+bd}, \qquad c' = \frac{a}{ac+bd}, \qquad d' = \frac{b}{ac+bd}
\]
then for each choice of boundary condition for the patch, there is an equality of partition functions on the cell before and after the spider move, up to an overall factor of $\Delta = ac+bd$. To set some notation we list these equations below. Note that if~$x$ is a cell with a configuration of dimers, $w(x)$ denotes the product of weights of occupied edges, and the weights are implicitly updated as described above after the spider move.
\begin{center}
\resizebox{0.6\textheight}{!}{
\begin{tabular}{cc}
$w\left(\begin{tikzpicture}[baseline = (current bounding box).center]
\draw (0,0) rectangle (1,1);
\draw[dashed] (0,0)--(-0.5,-0.5);
\draw[dashed] (1,0)--(1.5,-0.5);
\draw[dashed] (0,1)--(-0.5,1.5);
\draw[dashed] (1,1)--(1.5,1.5);
\draw[fill=black] (0,0) circle (3pt);
\draw[fill=black] (1,1) circle (3pt);
\draw[fill=white] (1,0) circle (3pt);
\draw[fill=white] (0,1) circle (3pt);
\draw[very thick,red] (0.05,0.45) ellipse (0.2cm and 1cm);
\draw[very thick,red,shift={(1.25,1.25)},rotate=45] (0,0) ellipse (0.5cm and 0.2cm);
\draw[very thick,red,shift={(1.25,-0.25)},rotate=-45] (0,0) ellipse (0.5cm and 0.2cm);
\end{tikzpicture}\right)  = \Delta \times w\left(\begin{tikzpicture}[baseline = (current bounding box).center]
\draw (0,0) rectangle (1,1);
\draw[] (0,0)--(-0.5,-0.5);
\draw[] (1,0)--(1.5,-0.5);
\draw[] (0,1)--(-0.5,1.5);
\draw[] (1,1)--(1.5,1.5);
\draw[dashed] (-0.5,-0.5)--(-1,-1);
\draw[dashed] (1.5,-0.5)--(2,-1);
\draw[dashed] (-0.5,1.5)--(-1,2);
\draw[dashed] (1.5,1.5)--(2,2);
\draw[fill=white] (0,0) circle (3pt);
\draw[fill=white] (1,1) circle (3pt);
\draw[fill=black] (1,0) circle (3pt);
\draw[fill=black] (0,1) circle (3pt);
\draw[fill=black] (-0.5,-0.5) circle (3pt);
\draw[fill=black] (1.5,1.5) circle (3pt);
\draw[fill=white] (1.5,-0.5) circle (3pt);
\draw[fill=white] (-0.5,1.5) circle (3pt);
\draw[very thick,red] (0.95,0.55) ellipse (0.2cm and 1cm);
\draw[very thick,red,shift={(1.75,1.75)},rotate=45] (0,0) ellipse (0.5cm and 0.2cm);
\draw[very thick,red,shift={(1.75,-0.75)},rotate=-45] (0,0) ellipse (0.5cm and 0.2cm);
\draw[very thick,red,shift={(-0.25,-0.25)},rotate=45] (0,0) ellipse (0.5cm and 0.2cm);
\draw[very thick,red,shift={(-0.25,1.25)},rotate=-45] (0,0) ellipse (0.5cm and 0.2cm);
\end{tikzpicture} \right)$ & ``Right" \\
$w\left(\begin{tikzpicture}[baseline = (current bounding box).center]
\draw (0,0) rectangle (1,1);
\draw[dashed] (0,0)--(-0.5,-0.5);
\draw[dashed] (1,0)--(1.5,-0.5);
\draw[dashed] (0,1)--(-0.5,1.5);
\draw[dashed] (1,1)--(1.5,1.5);
\draw[fill=black] (0,0) circle (3pt);
\draw[fill=black] (1,1) circle (3pt);
\draw[fill=white] (1,0) circle (3pt);
\draw[fill=white] (0,1) circle (3pt);
\draw[very thick,red] (1.05,0.45) ellipse (0.2cm and 1cm);
\draw[very thick,red,shift={(-0.25,-0.25)},rotate=45] (0,0) ellipse (0.5cm and 0.2cm);
\draw[very thick,red,shift={(-0.25,1.25)},rotate=-45] (0,0) ellipse (0.5cm and 0.2cm);
\end{tikzpicture}\right) = \Delta \times w\left(\begin{tikzpicture}[baseline = (current bounding box).center]
\draw (0,0) rectangle (1,1);
\draw[] (0,0)--(-0.5,-0.5);
\draw[] (1,0)--(1.5,-0.5);
\draw[] (0,1)--(-0.5,1.5);
\draw[] (1,1)--(1.5,1.5);
\draw[dashed] (-0.5,-0.5)--(-1,-1);
\draw[dashed] (1.5,-0.5)--(2,-1);
\draw[dashed] (-0.5,1.5)--(-1,2);
\draw[dashed] (1.5,1.5)--(2,2);
\draw[fill=white] (0,0) circle (3pt);
\draw[fill=white] (1,1) circle (3pt);
\draw[fill=black] (1,0) circle (3pt);
\draw[fill=black] (0,1) circle (3pt);
\draw[fill=black] (-0.5,-0.5) circle (3pt);
\draw[fill=black] (1.5,1.5) circle (3pt);
\draw[fill=white] (1.5,-0.5) circle (3pt);
\draw[fill=white] (-0.5,1.5) circle (3pt);
\draw[very thick,red] (-0.05,0.55) ellipse (0.2cm and 1cm);
\draw[very thick,red,shift={(1.25,1.25)},rotate=45] (0,0) ellipse (0.5cm and 0.2cm);
\draw[very thick,red,shift={(1.25,-0.25)},rotate=-45] (0,0) ellipse (0.5cm and 0.2cm);
\draw[very thick,red,shift={(-0.75,-0.75)},rotate=45] (0,0) ellipse (0.5cm and 0.2cm);
\draw[very thick,red,shift={(-0.75,1.75)},rotate=-45] (0,0) ellipse (0.5cm and 0.2cm);
\end{tikzpicture} \right)$ & ``Left" \\
$w\left(\begin{tikzpicture}[baseline = (current bounding box).center]
\draw (0,0) rectangle (1,1);
\draw[dashed] (0,0)--(-0.5,-0.5);
\draw[dashed] (1,0)--(1.5,-0.5);
\draw[dashed] (0,1)--(-0.5,1.5);
\draw[dashed] (1,1)--(1.5,1.5);
\draw[fill=black] (0,0) circle (3pt);
\draw[fill=black] (1,1) circle (3pt);
\draw[fill=white] (1,0) circle (3pt);
\draw[fill=white] (0,1) circle (3pt);
\draw[very thick,red] (0.55,-0.05) ellipse (1cm and 0.2cm);
\draw[very thick,red,shift={(-0.25,1.25)},rotate=-45] (0,0) ellipse (0.5cm and 0.2cm);
\draw[very thick,red,shift={(1.25,1.25)},rotate=45] (0,0) ellipse (0.5cm and 0.2cm);
\end{tikzpicture}\right) = \Delta \times w\left(\begin{tikzpicture}[baseline = (current bounding box).center]
\draw (0,0) rectangle (1,1);
\draw[] (0,0)--(-0.5,-0.5);
\draw[] (1,0)--(1.5,-0.5);
\draw[] (0,1)--(-0.5,1.5);
\draw[] (1,1)--(1.5,1.5);
\draw[dashed] (-0.5,-0.5)--(-1,-1);
\draw[dashed] (1.5,-0.5)--(2,-1);
\draw[dashed] (-0.5,1.5)--(-1,2);
\draw[dashed] (1.5,1.5)--(2,2);
\draw[fill=white] (0,0) circle (3pt);
\draw[fill=white] (1,1) circle (3pt);
\draw[fill=black] (1,0) circle (3pt);
\draw[fill=black] (0,1) circle (3pt);
\draw[fill=black] (-0.5,-0.5) circle (3pt);
\draw[fill=black] (1.5,1.5) circle (3pt);
\draw[fill=white] (1.5,-0.5) circle (3pt);
\draw[fill=white] (-0.5,1.5) circle (3pt);
\draw[very thick,red] (0.55,0.95) ellipse (1cm and 0.2cm);
\draw[very thick,red,shift={(1.75,1.75)},rotate=45] (0,0) ellipse (0.5cm and 0.2cm);
\draw[very thick,red,shift={(-0.75,1.75)},rotate=-45] (0,0) ellipse (0.5cm and 0.2cm);
\draw[very thick,red,shift={(1.25,-0.25)},rotate=-45] (0,0) ellipse (0.5cm and 0.2cm);
\draw[very thick,red,shift={(-0.25,-0.25)},rotate=45] (0,0) ellipse (0.5cm and 0.2cm);
\end{tikzpicture} \right)$ & ``Up"\\
$w\left(\begin{tikzpicture}[baseline = (current bounding box).center]
\draw (0,0) rectangle (1,1);
\draw[dashed] (0,0)--(-0.5,-0.5);
\draw[dashed] (1,0)--(1.5,-0.5);
\draw[dashed] (0,1)--(-0.5,1.5);
\draw[dashed] (1,1)--(1.5,1.5);
\draw[fill=black] (0,0) circle (3pt);
\draw[fill=black] (1,1) circle (3pt);
\draw[fill=white] (1,0) circle (3pt);
\draw[fill=white] (0,1) circle (3pt);
\draw[very thick,red] (0.55,0.95) ellipse (1cm and 0.2cm);
\draw[very thick,red,shift={(1.25,-0.25)},rotate=-45] (0,0) ellipse (0.5cm and 0.2cm);
\draw[very thick,red,shift={(-0.25,-0.25)},rotate=45] (0,0) ellipse (0.5cm and 0.2cm);
\end{tikzpicture}\right) = \Delta \times w\left(\begin{tikzpicture}[baseline = (current bounding box).center]
\draw (0,0) rectangle (1,1);
\draw[] (0,0)--(-0.5,-0.5);
\draw[] (1,0)--(1.5,-0.5);
\draw[] (0,1)--(-0.5,1.5);
\draw[] (1,1)--(1.5,1.5);
\draw[dashed] (-0.5,-0.5)--(-1,-1);
\draw[dashed] (1.5,-0.5)--(2,-1);
\draw[dashed] (-0.5,1.5)--(-1,2);
\draw[dashed] (1.5,1.5)--(2,2);
\draw[fill=white] (0,0) circle (3pt);
\draw[fill=white] (1,1) circle (3pt);
\draw[fill=black] (1,0) circle (3pt);
\draw[fill=black] (0,1) circle (3pt);
\draw[fill=black] (-0.5,-0.5) circle (3pt);
\draw[fill=black] (1.5,1.5) circle (3pt);
\draw[fill=white] (1.5,-0.5) circle (3pt);
\draw[fill=white] (-0.5,1.5) circle (3pt);
\draw[very thick,red] (0.55,-0.05) ellipse (1cm and 0.2cm);
\draw[very thick,red,shift={(1.25,1.25)},rotate=45] (0,0) ellipse (0.5cm and 0.2cm);
\draw[very thick,red,shift={(-0.25,1.25)},rotate=-45] (0,0) ellipse (0.5cm and 0.2cm);
\draw[very thick,red,shift={(1.75,-0.75)},rotate=-45] (0,0) ellipse (0.5cm and 0.2cm);
\draw[very thick,red,shift={(-0.75,-0.75)},rotate=45] (0,0) ellipse (0.5cm and 0.2cm);
\end{tikzpicture} \right)$ & ``Down"\\
$w\left(\begin{tikzpicture}[baseline = (current bounding box).center]
\draw (0,0) rectangle (1,1);
\draw[dashed] (0,0)--(-0.5,-0.5);
\draw[dashed] (1,0)--(1.5,-0.5);
\draw[dashed] (0,1)--(-0.5,1.5);
\draw[dashed] (1,1)--(1.5,1.5);
\draw[fill=black] (0,0) circle (3pt);
\draw[fill=black] (1,1) circle (3pt);
\draw[fill=white] (1,0) circle (3pt);
\draw[fill=white] (0,1) circle (3pt);
\draw[very thick,red,shift={(-0.25,1.25)},rotate=-45] (0,0) ellipse (0.5cm and 0.2cm);
\draw[very thick,red,shift={(1.25,1.25)},rotate=45] (0,0) ellipse (0.5cm and 0.2cm);
\draw[very thick,red,shift={(1.25,-0.25)},rotate=-45] (0,0) ellipse (0.5cm and 0.2cm);
\draw[very thick,red,shift={(-0.25,-0.25)},rotate=45] (0,0) ellipse (0.5cm and 0.2cm);
\end{tikzpicture}\right) = \Delta \times \left(w\left(\begin{tikzpicture}[baseline = (current bounding box).center]
\draw (0,0) rectangle (1,1);
\draw[] (0,0)--(-0.5,-0.5);
\draw[] (1,0)--(1.5,-0.5);
\draw[] (0,1)--(-0.5,1.5);
\draw[] (1,1)--(1.5,1.5);
\draw[dashed] (-0.5,-0.5)--(-1,-1);
\draw[dashed] (1.5,-0.5)--(2,-1);
\draw[dashed] (-0.5,1.5)--(-1,2);
\draw[dashed] (1.5,1.5)--(2,2);
\draw[fill=white] (0,0) circle (3pt);
\draw[fill=white] (1,1) circle (3pt);
\draw[fill=black] (1,0) circle (3pt);
\draw[fill=black] (0,1) circle (3pt);
\draw[fill=black] (-0.5,-0.5) circle (3pt);
\draw[fill=black] (1.5,1.5) circle (3pt);
\draw[fill=white] (1.5,-0.5) circle (3pt);
\draw[fill=white] (-0.5,1.5) circle (3pt);
\draw[very thick,red] (0.55,-0.05) ellipse (1cm and 0.2cm);
\draw[very thick,red] (0.55,0.95) ellipse (1cm and 0.2cm);
\draw[very thick,red,shift={(1.75,1.75)},rotate=45] (0,0) ellipse (0.5cm and 0.2cm);
\draw[very thick,red,shift={(-0.75,1.75)},rotate=-45] (0,0) ellipse (0.5cm and 0.2cm);
\draw[very thick,red,shift={(1.75,-0.75)},rotate=-45] (0,0) ellipse (0.5cm and 0.2cm);
\draw[very thick,red,shift={(-0.75,-0.75)},rotate=45] (0,0) ellipse (0.5cm and 0.2cm);
\end{tikzpicture} \right) + w\left(\begin{tikzpicture}[baseline = (current bounding box).center]
\draw (0,0) rectangle (1,1);
\draw[] (0,0)--(-0.5,-0.5);
\draw[] (1,0)--(1.5,-0.5);
\draw[] (0,1)--(-0.5,1.5);
\draw[] (1,1)--(1.5,1.5);
\draw[dashed] (-0.5,-0.5)--(-1,-1);
\draw[dashed] (1.5,-0.5)--(2,-1);
\draw[dashed] (-0.5,1.5)--(-1,2);
\draw[dashed] (1.5,1.5)--(2,2);
\draw[fill=white] (0,0) circle (3pt);
\draw[fill=white] (1,1) circle (3pt);
\draw[fill=black] (1,0) circle (3pt);
\draw[fill=black] (0,1) circle (3pt);
\draw[fill=black] (-0.5,-0.5) circle (3pt);
\draw[fill=black] (1.5,1.5) circle (3pt);
\draw[fill=white] (1.5,-0.5) circle (3pt);
\draw[fill=white] (-0.5,1.5) circle (3pt);
\draw[very thick,red] (-0.05,0.55) ellipse (0.2cm and 1cm);
\draw[very thick,red] (0.95,0.55) ellipse (0.2cm and 1cm);
\draw[very thick,red,shift={(1.75,1.75)},rotate=45] (0,0) ellipse (0.5cm and 0.2cm);
\draw[very thick,red,shift={(-0.75,1.75)},rotate=-45] (0,0) ellipse (0.5cm and 0.2cm);
\draw[very thick,red,shift={(1.75,-0.75)},rotate=-45] (0,0) ellipse (0.5cm and 0.2cm);
\draw[very thick,red,shift={(-0.75,-0.75)},rotate=45] (0,0) ellipse (0.5cm and 0.2cm);
\end{tikzpicture} \right)\right)$ & ``Creation"\\
$w\left(\begin{tikzpicture}[baseline = (current bounding box).center]
\draw (0,0) rectangle (1,1);
\draw[dashed] (0,0)--(-0.5,-0.5);
\draw[dashed] (1,0)--(1.5,-0.5);
\draw[dashed] (0,1)--(-0.5,1.5);
\draw[dashed] (1,1)--(1.5,1.5);
\draw[fill=black] (0,0) circle (3pt);
\draw[fill=black] (1,1) circle (3pt);
\draw[fill=white] (1,0) circle (3pt);
\draw[fill=white] (0,1) circle (3pt);
\draw[very thick,red] (0.55,0.95) ellipse (1cm and 0.2cm);
\draw[very thick,red] (0.55,-0.05) ellipse (1cm and 0.2cm);
\end{tikzpicture}\right) + w\left(\begin{tikzpicture}[baseline = (current bounding box).center]
\draw (0,0) rectangle (1,1);
\draw[dashed] (0,0)--(-0.5,-0.5);
\draw[dashed] (1,0)--(1.5,-0.5);
\draw[dashed] (0,1)--(-0.5,1.5);
\draw[dashed] (1,1)--(1.5,1.5);
\draw[fill=black] (0,0) circle (3pt);
\draw[fill=black] (1,1) circle (3pt);
\draw[fill=white] (1,0) circle (3pt);
\draw[fill=white] (0,1) circle (3pt);
\draw[very thick,red] (0.05,0.45) ellipse (0.2cm and 1cm);
\draw[very thick,red] (1.05,0.45) ellipse (0.2cm and 1cm);
\end{tikzpicture}\right)= \Delta \times w\left(\begin{tikzpicture}[baseline = (current bounding box).center]
\draw (0,0) rectangle (1,1);
\draw[] (0,0)--(-0.5,-0.5);
\draw[] (1,0)--(1.5,-0.5);
\draw[] (0,1)--(-0.5,1.5);
\draw[] (1,1)--(1.5,1.5);
\draw[dashed] (-0.5,-0.5)--(-1,-1);
\draw[dashed] (1.5,-0.5)--(2,-1);
\draw[dashed] (-0.5,1.5)--(-1,2);
\draw[dashed] (1.5,1.5)--(2,2);
\draw[fill=white] (0,0) circle (3pt);
\draw[fill=white] (1,1) circle (3pt);
\draw[fill=black] (1,0) circle (3pt);
\draw[fill=black] (0,1) circle (3pt);
\draw[fill=black] (-0.5,-0.5) circle (3pt);
\draw[fill=black] (1.5,1.5) circle (3pt);
\draw[fill=white] (1.5,-0.5) circle (3pt);
\draw[fill=white] (-0.5,1.5) circle (3pt);
\draw[very thick,red,shift={(1.25,1.25)},rotate=45] (0,0) ellipse (0.5cm and 0.2cm);
\draw[very thick,red,shift={(-0.25,1.25)},rotate=-45] (0,0) ellipse (0.5cm and 0.2cm);
\draw[very thick,red,shift={(1.25,-0.25)},rotate=-45] (0,0) ellipse (0.5cm and 0.2cm);
\draw[very thick,red,shift={(-0.25,-0.25)},rotate=45] (0,0) ellipse (0.5cm and 0.2cm);
\end{tikzpicture} \right)$ & ``Destruction"\\
\end{tabular}
}
\end{center}
We label the boundary condition by the type of move that it corresponds in the shuffling when going from the domain on the LHS to the domain on the RHS. In the equations that follow, we will label a boundary condition for a cell by an arrow to indicate left/right/up/down, a `$c$' for creation, or a `$d$' for destruction.

For the double dimer model the situation is more complicated. A boundary condition~$(\alpha \beta)$ for a cell consists of a boundary condition~$\alpha$ for the blue configuration and a boundary condition~$\beta$ for the red one. Define~$C$ as the set of boundary conditions~$(\alpha \beta)$ for a cell such that 
\begin{itemize}
    \item $\alpha = c$ and~$\beta \in \{c, \leftarrow, \downarrow\} $ or
    \item  $\alpha  \in \{c, \leftarrow, \downarrow\}$ and~$\beta = c $
\end{itemize}
and define~$D$ as the set of boundary conditions~$(\alpha \beta)$ such that
\begin{itemize}
    \item $\alpha = d$ and~$\beta \in \{d, \leftarrow, \downarrow\} $ or
    \item  $\alpha  \in \{d, \leftarrow, \downarrow\}$ and~$\beta = d $.
\end{itemize}
We have the following relation between partition functions before and after the spider move.
\begin{lem} \label{lem:2spider}
    Let  the weights after the spider move be
    \[
    a' = \frac{c}{ac+bd}, \qquad b' = \frac{d}{ac+bd}, \qquad c' = \frac{a}{ac+bd}, \qquad d' = \frac{b}{ac+bd}.
    \]
    For any pair of boundary conditions~$\alpha, \beta \in \{c,d,\rightarrow,\leftarrow,\uparrow,\downarrow\}$ for each color, denote by~$Z_{\alpha\beta}$ the partition function of the domain with these boundary conditions before the spider move, and~$Z'_{\alpha \beta}$ that of the domain after the spider move. Then
    \begin{align}
         Z_{\alpha \beta} =& \Delta^2 \Gamma Z_{\alpha \beta}', \qquad (\alpha \beta) \in C \\
         Z_{\alpha \beta} =& \Delta^2 \Gamma^{-1} Z_{\alpha \beta}' \qquad (\alpha \beta) \in D \\
        Z_{\alpha\beta} =& \Delta^2 Z_{\alpha\beta}' \qquad \text{o.w.}
    \end{align}
   where $\Delta = ac+bd $ and $\Gamma = \frac{ac+bd}{act+bd}$.
\end{lem}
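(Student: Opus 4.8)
The plan is to reduce the whole statement to the single-colour spider move identities displayed above (the ones labelled ``Right'', ``Left'', ``Up'', ``Down'', ``Creation'', ``Destruction'') together with a careful count of how many interactions sit in the cell before and after the move. The key observation is that everything here is \emph{local}: $Z_{\alpha\beta}$ (resp.\ $Z'_{\alpha\beta}$) is the sum, over all pairs consisting of a blue dimer configuration of the single cell compatible with the boundary condition $\alpha$ and a red one compatible with $\beta$, of the product of the weights of the occupied edges times $t^{m}$, where $m\in\{0,1\}$ is the number of interactions of type \eqref{eqn:local_before} (resp.\ \eqref{eqn:local_after}) present in the cell. Because the blue and red edge weights are independent, the colours couple only through $t^{m}$; writing $t^{m}=1+(t-1)m$ splits $Z_{\alpha\beta}$ into a product part $Z_\alpha Z_\beta$ (the product of the two single-colour cell partition functions) and a correction supported on interacting pairs. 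First I would record the single-colour identities, which say exactly that $Z_\gamma=\Delta\,Z'_\gamma$ for every $\gamma\in\{c,d,\rightarrow,\leftarrow,\uparrow,\downarrow\}$ (for $\gamma=c$ the ``before'' side is the unique corner-decorated configuration and the ``after'' side is the two-term sum $a'c'+b'd'$, and dually for $\gamma=d$), and the two elementary simplifications that drive all the computations: from $a'=c/\Delta$, $b'=d/\Delta$, $c'=a/\Delta$, $d'=b/\Delta$ and $\Delta=ac+bd$ one has $a'c'+b'd'=\tfrac1\Delta$ and $t\,a'c'+b'd'=\tfrac{1}{\Delta\Gamma}$ (equivalently $t\cdot ac+bd=\Delta/\Gamma$).

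The proof then splits according to how many of $\alpha,\beta$ lie in $\{c,d\}$. If neither does, each colour has a single compatible configuration both before and after the move, and one checks directly from \eqref{eqn:local_before}--\eqref{eqn:local_after} that the spider move neither creates nor destroys an interaction in such a cell; hence $Z_{\alpha\beta}=t^{m}Z_\alpha Z_\beta=t^{m}\Delta^2 Z'_\alpha Z'_\beta=\Delta^2 Z'_{\alpha\beta}$, which is the ``o.w.'' case (and note this is consistent with $C$ and $D$ each requiring at least one creation or destruction label). If exactly one of $\alpha,\beta$, say $\beta$, is a creation or destruction, that colour contributes a two-term sum on one side of the move (post-move for a creation, pre-move for a destruction) and a single term on the other; the single-colour identity for $\alpha$ absorbs the $\Delta$ from the $\alpha$-colour and one $\Delta$ from the $\beta$-colour, leaving a two-term sum $\bigl(t^{m_1}a'c'+t^{m_2}b'd'\bigr)$ post-move (creation) or the analogous pre-move sum $t^{m_1}ac+t^{m_2}bd$ (destruction). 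Here the combinatorics of \eqref{eqn:local_before}--\eqref{eqn:local_after} enters: one determines, for each of the two configurations of the creating/destructing colour, whether it forms an interaction with the unique configuration of the other colour, and finds that exactly the term built from the edges $a',c'$ (resp.\ $a,c$) acquires the factor $t$, precisely when $(\alpha\beta)$ lies in $C$ (resp.\ $D$) and not otherwise. By the elementary identities the sum then equals $\tfrac{1}{\Delta\Gamma}$ (a creation, producing the prefactor $\Delta^2\Gamma$), or $\Delta/\Gamma$ (a destruction, producing $\Delta^2\Gamma^{-1}$), or $\tfrac1\Delta$ with no extra $\Gamma$ otherwise.

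Finally the case where both $\alpha$ and $\beta$ are creations, or both destructions, is handled the same way but with a double sum. For $(cc)$ one gets $Z_{cc}=1$ (both colours corner-decorated, no interaction) while $Z'_{cc}=(a'c'+b'd')(t\,a'c'+b'd')=\tfrac1\Delta\cdot\tfrac{1}{\Delta\Gamma}=\tfrac{1}{\Delta^2\Gamma}$; for $(dd)$ one gets $Z_{dd}=\Delta\,(t\cdot ac+bd)=\Delta^2/\Gamma$ and $Z'_{dd}=1$; and for the mixed pairs $(\alpha c)$, $(c\beta)$ with $\alpha,\beta\in\{c,\leftarrow,\downarrow\}$ the same computation yields the factor $\Delta^2\Gamma$, dually $\Delta^2\Gamma^{-1}$ for the corresponding destruction pairs. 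This is exactly how the sets $C$ and $D$ are defined.

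The main obstacle is the bookkeeping in the middle step: enumerating, for every boundary-condition pair, the configurations of each colour compatible with it and deciding which colour-configuration pairs realize one of the local interaction patterns \eqref{eqn:local_before} before the move and \eqref{eqn:local_after} after it. This is mechanical once one fixes the dictionary between the arrow/creation/destruction labels and the edges used in the cell (which of $a,b,c,d$ a given ``slide'' occupies, and that a creation or destruction uses the $\{a,c\}$ or the $\{b,d\}$ matching), but it is a moderately long case check; the point is that, case by case, it collapses onto the two identities $a'c'+b'd'=\tfrac1\Delta$ and $t\,a'c'+b'd'=\tfrac{1}{\Delta\Gamma}$, so no case requires any real computation beyond these.
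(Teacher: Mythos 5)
Your proposal is correct and is ultimately the same argument as the paper's, which simply observes that the lemma amounts to $36$ finite identities and asserts they can all be checked by hand or by computer; your version organizes that same exhaustive verification around the single-colour identities $Z_\gamma=\Delta Z'_\gamma$ together with $a'c'+b'd'=\tfrac1\Delta$ and $t\,a'c'+b'd'=\tfrac{1}{\Delta\Gamma}$, which is exactly the mechanism visible in the paper's own worked example $Z_{d\downarrow}=a(act+bd)=\Delta^2\Gamma^{-1}Z'_{d\downarrow}$. The only caution for carrying out the bookkeeping is that the corner/external dimers participate in the interaction patterns (e.g.\ the third patterns in \eqref{eqn:local_before} and \eqref{eqn:local_after}), so some creation cases such as $(\downarrow c)$ produce a common power of $t$ on both terms in addition to the relative factor on the $a'c'$ term --- your $t^{m_1},t^{m_2}$ formulation accommodates this.
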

\begin{proof}
    As there are only 36 choices of boundary condition, one can check (by hand or via computer) that the required 36 equations are satisfied. 
\end{proof}
As an example, $Z_{d\downarrow}$ means the partition function for the domain where the smaller color has the ``Destruction" boundary condition while the larger color has the ``Down" boundary condition. For boundary conditions of type $(d,\downarrow)$ one can check
\[
\begin{aligned}
w\left(
\resizebox{2cm}{!}{
\begin{tikzpicture}[baseline = (current bounding box).center]
\draw (0,0) rectangle (1,1);
\draw[dashed] (0,0)--(-0.5,-0.5);
\draw[dashed] (1,0)--(1.5,-0.5);
\draw[dashed] (0,1)--(-0.5,1.5);
\draw[dashed] (1,1)--(1.5,1.5);
\draw[fill=black] (0,0) circle (3pt);
\draw[fill=black] (1,1) circle (3pt);
\draw[fill=white] (1,0) circle (3pt);
\draw[fill=white] (0,1) circle (3pt);
\draw[very thick,blue] (0.5,1) ellipse (1cm and 0.2cm);
\draw[very thick,blue] (0.5,0) ellipse (1cm and 0.2cm);
\draw[very thick,red] (0.55,0.95) ellipse (1cm and 0.2cm);
\draw[very thick,red,shift={(1.25,-0.25)},rotate=-45] (0,0) ellipse (0.5cm and 0.2cm);
\draw[very thick,red,shift={(-0.25,-0.25)},rotate=45] (0,0) ellipse (0.5cm and 0.2cm);
\end{tikzpicture}
}
\right) + w\left(
\resizebox{2cm}{!}{
\begin{tikzpicture}[baseline = (current bounding box).center]
\draw (0,0) rectangle (1,1);
\draw[dashed] (0,0)--(-0.5,-0.5);
\draw[dashed] (1,0)--(1.5,-0.5);
\draw[dashed] (0,1)--(-0.5,1.5);
\draw[dashed] (1,1)--(1.5,1.5);
\draw[fill=black] (0,0) circle (3pt);
\draw[fill=black] (1,1) circle (3pt);
\draw[fill=white] (1,0) circle (3pt);
\draw[fill=white] (0,1) circle (3pt);
\draw[very thick,blue] (0,0.5) ellipse (0.2cm and 1cm);
\draw[very thick,blue] (1,0.5) ellipse (0.2cm and 1cm);
\draw[very thick,red] (0.55,0.95) ellipse (1cm and 0.2cm);
\draw[very thick,red,shift={(1.25,-0.25)},rotate=-45] (0,0) ellipse (0.5cm and 0.2cm);
\draw[very thick,red,shift={(-0.25,-0.25)},rotate=45] (0,0) ellipse (0.5cm and 0.2cm);
\end{tikzpicture}
}
\right)=& \frac{\Delta^2}{\Gamma} \times w\left(
\resizebox{2cm}{!}{
\begin{tikzpicture}[baseline = (current bounding box).center]
\draw (0,0) rectangle (1,1);
\draw[] (0,0)--(-0.5,-0.5);
\draw[] (1,0)--(1.5,-0.5);
\draw[] (0,1)--(-0.5,1.5);
\draw[] (1,1)--(1.5,1.5);
\draw[dashed] (-0.5,-0.5)--(-1,-1);
\draw[dashed] (1.5,-0.5)--(2,-1);
\draw[dashed] (-0.5,1.5)--(-1,2);
\draw[dashed] (1.5,1.5)--(2,2);
\draw[fill=white] (0,0) circle (3pt);
\draw[fill=white] (1,1) circle (3pt);
\draw[fill=black] (1,0) circle (3pt);
\draw[fill=black] (0,1) circle (3pt);
\draw[fill=black] (-0.5,-0.5) circle (3pt);
\draw[fill=black] (1.5,1.5) circle (3pt);
\draw[fill=white] (1.5,-0.5) circle (3pt);
\draw[fill=white] (-0.5,1.5) circle (3pt);
\draw[very thick,blue,shift={(1.25,1.25)},rotate=45] (0,0) ellipse (0.5cm and 0.2cm);
\draw[very thick,blue,shift={(-0.25,1.25)},rotate=-45] (0,0) ellipse (0.5cm and 0.2cm);
\draw[very thick,blue,shift={(1.25,-0.25)},rotate=-45] (0,0) ellipse (0.5cm and 0.2cm);
\draw[very thick,blue,shift={(-0.25,-0.25)},rotate=45] (0,0) ellipse (0.5cm and 0.2cm);
\draw[very thick,red] (0.55,-0.05) ellipse (1cm and 0.2cm);
\draw[very thick,red,shift={(1.3,1.2)},rotate=45] (0,0) ellipse (0.5cm and 0.2cm);
\draw[very thick,red,shift={(-0.3,1.2)},rotate=-45] (0,0) ellipse (0.5cm and 0.2cm);
\draw[very thick,red,shift={(1.75,-0.75)},rotate=-45] (0,0) ellipse (0.5cm and 0.2cm);
\draw[very thick,red,shift={(-0.75,-0.75)},rotate=45] (0,0) ellipse (0.5cm and 0.2cm);
\end{tikzpicture} 
}
\right) \\
\implies a^2ct + abd =& \frac{(ac+bd)^2(act+bd)}{ac+bd} c'
\end{aligned}
\]
which is consistent as $c' = \frac{a}{ac+bd}$.

The following combinatorial lemma, whose proof we omit, will be useful.
\begin{lem} \label{lem:gamcount}
   For any double dimer configuration on the Aztec diamond of rank~$N$, along each SW-NE diagonal of cells the difference between the number of cells with local boundary condition of type $(\alpha \beta) \in C$ and those of type $(\alpha \beta ) \in D$ is equal to $1$.
\end{lem}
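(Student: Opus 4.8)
The plan is to introduce an $\{L,R\}$-valued \emph{state} along each SW--NE diagonal and show that it makes $\#C-\#D$ telescope. Fix such a diagonal, with cells $f_1,\dots,f_n$ (each $f_{k+1}$ the translate of $f_k$ by one step to the north-east) and shared corner vertices $p_0,\dots,p_n$, so that $p_{k-1}$ is the SW corner of $f_k$ and $p_k$ is its NE corner. From the checkerboard coloring one reads off that the four edges of $\mathbb{Z}^2$ incident to $p_k$ are precisely the top and right edges of $f_k$ together with the bottom and left edges of $f_{k+1}$; since every vertex is matched by exactly one dimer of each color, for each color exactly one of these four edges is occupied. For a fixed color, define $\mathrm{st}(p_k)=L$ if the occupied edge at $p_k$ is an edge of $f_k$ (its top or right edge) and $\mathrm{st}(p_k)=R$ if it is an edge of $f_{k+1}$ (its bottom or left edge).

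The first step is a short, mechanical dictionary relating the six boundary-condition types of a single cell $f_k$ for one color to the pair $(\mathrm{st}(p_{k-1}),\mathrm{st}(p_k))$. Letting $B$ denote the set of edges of $f_k$ occupied by that color ($|B|\le 2$, and $|B|=2$ forces $B$ to be a pair of opposite edges), one checks directly from the cell pictures that define the types $\{c,d,\rightarrow,\leftarrow,\uparrow,\downarrow\}$ that: $f_k$ has type in $\{c,\leftarrow,\downarrow\}$ iff the bottom and left edges of $f_k$ are unoccupied iff $\mathrm{st}(p_{k-1})=L$; $f_k$ has type in $\{d,\leftarrow,\downarrow\}$ iff the top or right edge of $f_k$ is occupied iff $\mathrm{st}(p_k)=L$; $f_k$ is a creation iff $(\mathrm{st}(p_{k-1}),\mathrm{st}(p_k))=(L,R)$; and $f_k$ is a destruction iff $(\mathrm{st}(p_{k-1}),\mathrm{st}(p_k))=(R,L)$. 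In particular the state is automatically consistent along the diagonal.

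Now write $b_j=\mathbf{1}[\mathrm{bst}(p_j)=L]$ and $r_j=\mathbf{1}[\mathrm{rst}(p_j)=L]$ for the blue and red states, and set $Q_j=b_jr_j$. Substituting the dictionary into the definitions of $C$ and $D$ from Lemma~\ref{lem:2spider} gives, for every cell $f_k$: $f_k\in C$ iff $Q_{k-1}=1$ and $Q_k=0$ (both colors are in state $L$ at $p_{k-1}$ and at least one is in state $R$ at $p_k$), and $f_k\in D$ iff $Q_{k-1}=0$ and $Q_k=1$. Hence $\mathbf{1}[f_k\in C]-\mathbf{1}[f_k\in D]=Q_{k-1}-Q_k$, and summing over the diagonal yields $\#C-\#D=Q_0-Q_n$. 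It remains to evaluate the boundary terms: at the two ends of a SW--NE diagonal the dimer configuration, together with the boundary decoration used in the shuffle, is forced, and a direct inspection gives $\mathrm{bst}(p_0)=\mathrm{rst}(p_0)=L$ and $\mathrm{bst}(p_n)=\mathrm{rst}(p_n)=R$, i.e.\ $Q_0=1$ and $Q_n=0$; equivalently this is the classical statement that an ordinary (single-color) domino shuffle has exactly one more creation than destruction along each diagonal (so $b_0-b_n=r_0-r_n=1$). Therefore $\#C-\#D=1$.

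The only genuinely fiddly part is the dictionary of the second step: the equivalences are elementary, but they are sensitive to the orientation conventions for the cells and to the exact labelling of the slide directions $\{\rightarrow,\leftarrow,\uparrow,\downarrow\}$, so these must be pinned down before running the (finite) case-check. Verifying the forced boundary states $Q_0=1$, $Q_n=0$ at the ends of each diagonal — equivalently, quoting the single-color count — is the other place where a little care is needed; everything else is formal.
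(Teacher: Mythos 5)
Your proof is correct. Note that the paper itself does not supply an argument for Lemma~\ref{lem:gamcount} (it is stated with ``whose proof we omit''), so there is nothing to compare against; your write-up actually fills that gap. I checked your dictionary against the six boundary-condition pictures preceding Lemma~\ref{lem:2spider}: a cell has type in $\{c,\leftarrow,\downarrow\}$ exactly when its SW (black) corner is matched away from the cell, and type in $\{d,\leftarrow,\downarrow\}$ exactly when its NE (black) corner is matched into the cell, so creation corresponds to the corner-state pair $(L,R)$, destruction to $(R,L)$, and the slides to $(L,L)$ or $(R,R)$. Substituting into the definitions of $C$ and $D$ does give $\mathbf{1}[f_k\in C]-\mathbf{1}[f_k\in D]=Q_{k-1}-Q_k$ with $Q_j$ the indicator that both colors are in state $L$ at $p_j$, and the sum telescopes. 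The only external input is the evaluation of the boundary terms, and your reduction there is clean: since each $b_j,r_j\in\{0,1\}$, the single-color per-diagonal identity $\#\{c\}-\#\{d\}=b_0-b_n=1$ forces $b_0=r_0=1$ and $b_n=r_n=0$, hence $Q_0-Q_n=1$. That single-color count is the classical fact underlying ordinary domino shuffling (it is what makes the rank, and the number of filled blocks per diagonal, increase by exactly one), and it can equally be obtained by inspecting the forced configuration on the boundary decoration at the two ends of the diagonal, as you indicate. One small remark: taking the two colors equal shows your telescoping identity also gives the converse implication, so the two-color lemma is in fact equivalent to the single-color count; it would be worth stating explicitly which of the two routes (direct boundary inspection versus citation of the one-color statement) you intend as the formal justification of $Q_0=1$, $Q_n=0$.
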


Suppose that in a double dimer configuration, we have a cell~$x$ with boundary condition~$(\alpha \beta)$. We define the transition probability~$\mathbb{P}(x \rightarrow x')$ from~$x$ to a double dimer configuration~$x'$ in the cell after performing the spider move as
$$ \mathbb{P}(x \rightarrow x') = 
\begin{cases}
\frac{w(x')}{Z_{\alpha \beta}'} & \text{if~$x'$ has boundary condition } (\alpha \beta) \\
0 & \text{otherwise}
\end{cases} $$
where~$Z_{\alpha \beta}'$ denotes the partition function in this cell after the spider move with the boundary conditions~$(\alpha \beta)$, as in Lemma~\ref{lem:2spider}. 

Now we define the shuffling algorithm of Theorem~\ref{thm:basic} in terms of local moves; we will apply the spider move to each cell of the Aztec diamond and re-sample the double dimer configuration in each cell. In more detail, suppose that~$T^{(N+1)}$ is a double dimer configuration of the rank-$(N+1)$ Aztec diamond. Note that~$T^{(N+1)}$ corresponds uniquely to a double dimer configuration on the graph obtained from the rank-$N$ Aztec diamond by decorating the boundary and performing spider moves at each cell (see Figure~\ref{fig:square_shuffling}, bottom left). We denote by~$x^{(N+1)} $ the local configuration in each cell of this graph. For a double dimer cover~$T^{(N)}$ of the rank-$N$ Aztec diamond, we define 
$$\mathbb{P}(T^{(N)}\to T^{(N+1)}) = \prod_{\text{cells x}} \mathbb{P}(x^{(N)} \rightarrow x^{(N+1)})$$
where~$x^{(N)}$ is the configuration in cell~$x$ in~$T^{(N)}$ and~$x^{(N+1)}$ is as described above. If all weights are constant, which is the uniform case, the transition probabilities defined above coincide with those of Theorem~\ref{thm:basic}.

\begin{proof}[Alternate proof of Thm. \ref{thm:basic} for the case of two colors]
Consider a random $2$-tiling $T^{(N)}$ of the Aztec diamond of rank $N$. The tiling is made up of gluing together dimer configurations in each cell. The weight of a $2$-tiling is given by product over cells of the weight in each cell. Let $Z^{(N)}$ be the partition function for these $2$-tilings. 

By applying the spider move to each cell and then contracting all valence two vertices we get a $2$-tiling of the rank-$(N+1)$ Aztec diamond (see Fig. \ref{fig:square_shuffling}). The probability to obtain a specific $2$-tiling $T^{(N+1)}$, whose configuration in cell~$x$ before the contraction step is denoted~$x^{(N+1)}$, is given by
\begin{equation*}
\begin{aligned}
    \mathbb{P}(T^{(N+1)}) = & \sum_{T^{(N)}}\mathbb{P}(T^{(N)})\mathbb{P}(T^{(N)}\to T^{(N+1)}) \\
    =& \sum_{T^{(N)}} \frac{w(T^{(N)})}{Z^{(N)}}\mathbb{P}(T^{(N)}\to T^{(N+1)}) \\
    =& \frac{1}{Z^{(N)}} \sum_{T^{(N)}} \prod_{\text{cells $x$ of $T^{(N)}$}} w(x^{(N)}) \mathbb{P}(T^{(N)}\to T^{(N+1)}) \\
    =& \frac{1}{Z^{(N)}}  \prod_{\text{cells $x$}}\left( \sum_{\substack{\text{local configs in } x \\
    \text{ consistent with $T^{(N+1)}$}}}  w(x^{(N)}) \mathbb{P}(x^{(N)} \to x^{(N+1)}) \right)  \\
    =& \frac{1}{Z^{(N)}} \prod_{\text{cells $x$}} \Delta^2 \;  w(x^{(N+1)}) \prod_{\substack{x \text{ with} \\ \text{ b.c. in } D}} \Gamma^{-1}(x) \prod_{\substack{x \text{ with} \\ \text{ b.c. in } C}} \Gamma(x).
\end{aligned}
\end{equation*}
In the final line~$\Gamma(x)$ denotes the value of~$\Gamma$ for the weights~$a,b,c,d$ of the cell~$x$, and we have implicitly updated the weights in the last line. In the last equality we use the relations from Lemma \ref{lem:2spider}.

If $\Gamma$ is constant, that is, $\Gamma(x) = r$ for all cells $x$, then using Lemma \ref{lem:gamcount} we see that
\begin{equation}
    \mathbb{P}(T^{(N+1)}) = \frac{r^{N+1}}{Z^{(N)}} \prod_{\text{cells $x$}} \Delta^2 \;  w(x^{(N+1)}) 
\end{equation}
that is, the probability we get a $2$-tiling $T^{(N+1)}$ via our shuffling algorithm is proportional to the weight of that $2$-tiling, $w(T^{(N+1)}) = \prod_{\text{cells $x$}} w(x^{(N+1)})$. 

Note that if we choose uniform weights, so that in each cell $a=b=c=d$, then they remain uniform after each step of the shuffling. Furthermore, for this choice of weights, $\Gamma = \frac{2}{1+t}$. It follows that the shuffling algorithm works in this case.
\end{proof}

\begin{remark}
    While for the standard ($1$-tiling) domino shuffling algorithm the above argument allows for arbitrary edge weights, in the $2$-tiling case there are extra restrictions on the weights. Note that for the shuffling to work, it is sufficient for the $\Gamma(x)$ to be constant along each diagonal at each step of the algorithm. That is, after using the shuffling to go from rank $N$ to rank $N+1$, the updated $\Gamma(x)$'s, defined using the updated weights, should still remain constant as the cell~$x$ ranges along a SW-NE diagonal. This imposes much stricter constraints on the edge weights.
\end{remark}

\section{Simulations} \label{sec:sim}
In this section, we present several simulations of the coupled tilings made using using the shuffling algorithm.

Notice that the tilings appear to exhibit a limit shape phenomenon, with regions of frozen dominos separated from a disorder region by an arctic curve. When $t<1$ the arctic curves appear to have a cusp along the SE boundary, for example see Figures \ref{fig:t0.2b1}, \ref{fig:t0.2b0.5}, \ref{fig:t0.2b2}, \ref{fig:t0.2b2c3}. When $t>1$ the cusp appears along the NW boundary instead, see Figures \ref{fig:t5b2} and \ref{fig:t10000b1}. Note when $t=0$, southern region of frozen horizontal dominos vanishes and the frozen region in the cusp becomes large, see Figure \ref{fig:t0b5}. When $t\to \infty$ the western region of frozen vertical dominos vanishes and is replaced by the frozen region from the cusp, see Figure \ref{fig:t10000b1}. In simulations with three colors there seems to be two cusps appearing, Figure \ref{fig:t0.2b2c3}. The arctic curves for $t=0,\infty$ can in fact be computed, similarly to what was done in Theorem 5.4 of \cite{LLTaztec}.

Surprisingly, the large scale features, such as arctic curves, always seem to look the same for each color, despite the fact that the coupling is not symmetric between the colors. Of course for $t=1$, Figure~\ref{fig:t1b1c3}, this symmetry follows from the fact that the tilings are independent. For $t=0,\infty$, Figures \ref{fig:t0b5} and \ref{fig:t10000b1}, arguments similar to those in Theorem 5.4 and Lemma 6.1 of \cite{LLTaztec} show that asymptotically the arctic curves are in fact the same. It should be possible to strengthen the cited theorem to show that the limiting height functions are also the same. However, for the intermediate values of $t$ it is unclear why there should exist such a symmetry. 

Comparing Figures \ref{fig:t0.2b0.5} and \ref{fig:t5b2} we see that there appears to be a symmetry with respect to taking $t,b,c\mapsto \frac{1}{t},\frac{1}{b},\frac{1}{c}$. This follows from a similar argument to Lemma 6.1 of \cite{LLTaztec}.

A more detailed numerical analysis of the limit shapes and their fluctuations using the shuffling algorithm to generate random samples would be an interesting follow-up to the current work.

\begin{remark}
    In \cite{LLTaztec} they define two different versions of the coupled tilings which they call the ``purple-gray" model and the ``white-pink" model. They then prove results about the purple-gray model. In this article, we study the white-pink model, so the results do not immediately apply. However, one can show, by very similar arguments, that analogous results to Theorem 5.4 and Lemma 6.1 of \cite{LLTaztec} hold in the white-pink model as well.  
\end{remark}

\begin{figure}[!htb]
    \centering
    \includegraphics[width=\textwidth]{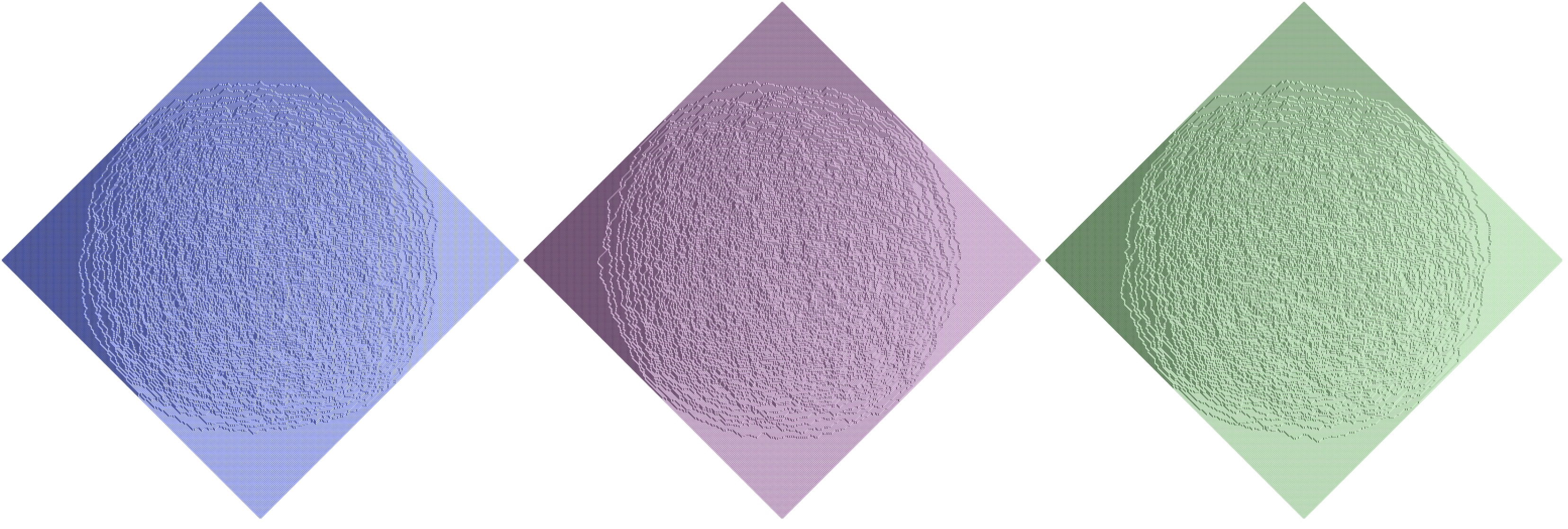}
        
    \caption{A $3$-tiling of rank $N=256$ generated by the shuffling algorithm with $t=1$ and $c_i=b_i=1$ for all $i=1,\ldots,N$.}
    \label{fig:t1b1c3}
\end{figure}

\begin{figure}[!htb]
    \centering
    \includegraphics[width=0.8\textwidth]{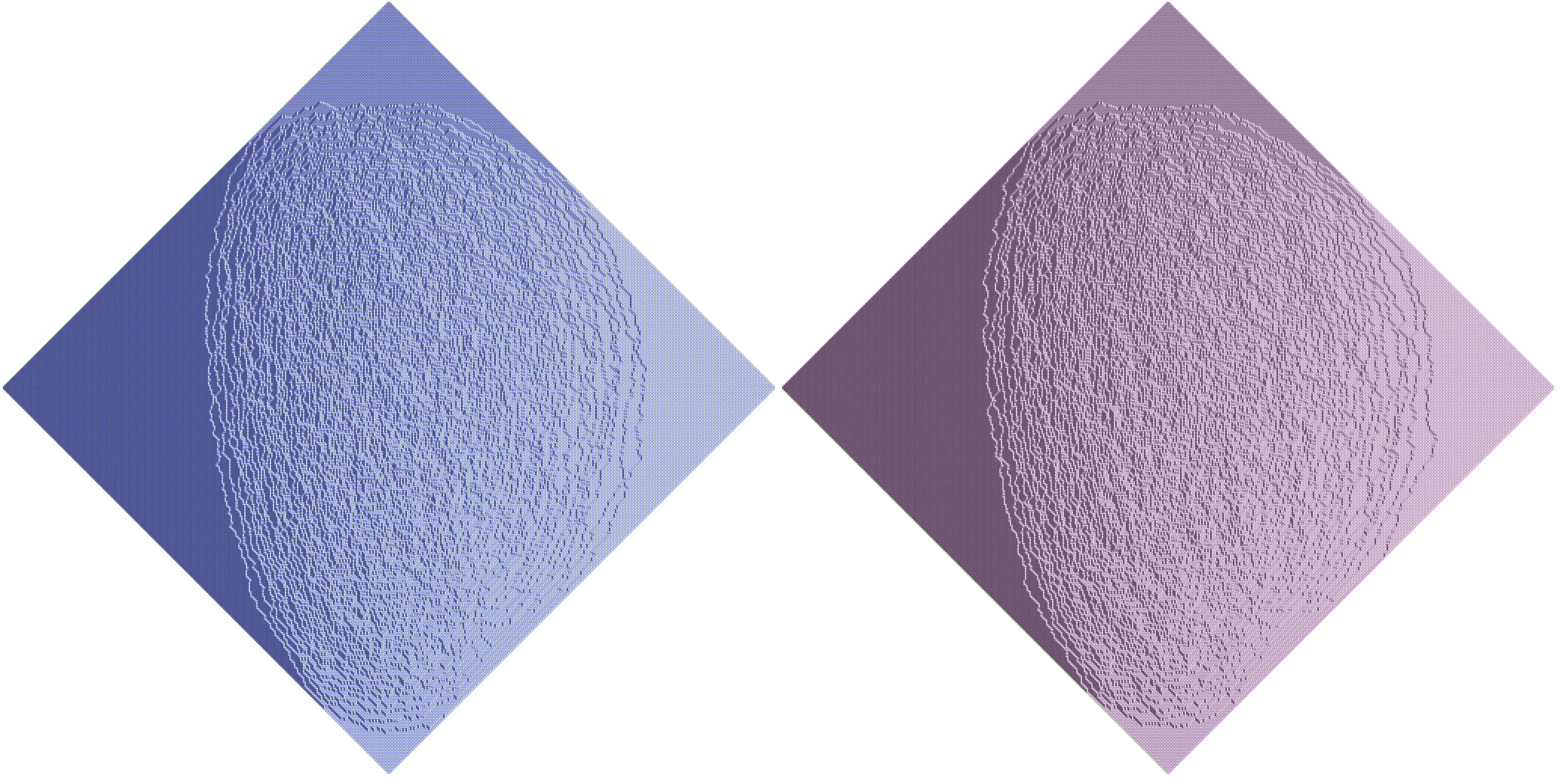}
        
    \caption{A $2$-tiling of rank $N=256$ generated by the shuffling algorithm with $t=0.2$ and $c_i=b_i=1$ for all $i=1,\ldots,N$.}
    \label{fig:t0.2b1}
\end{figure}

\begin{figure}[!htb]
    \centering
    \includegraphics[width=0.8\textwidth]{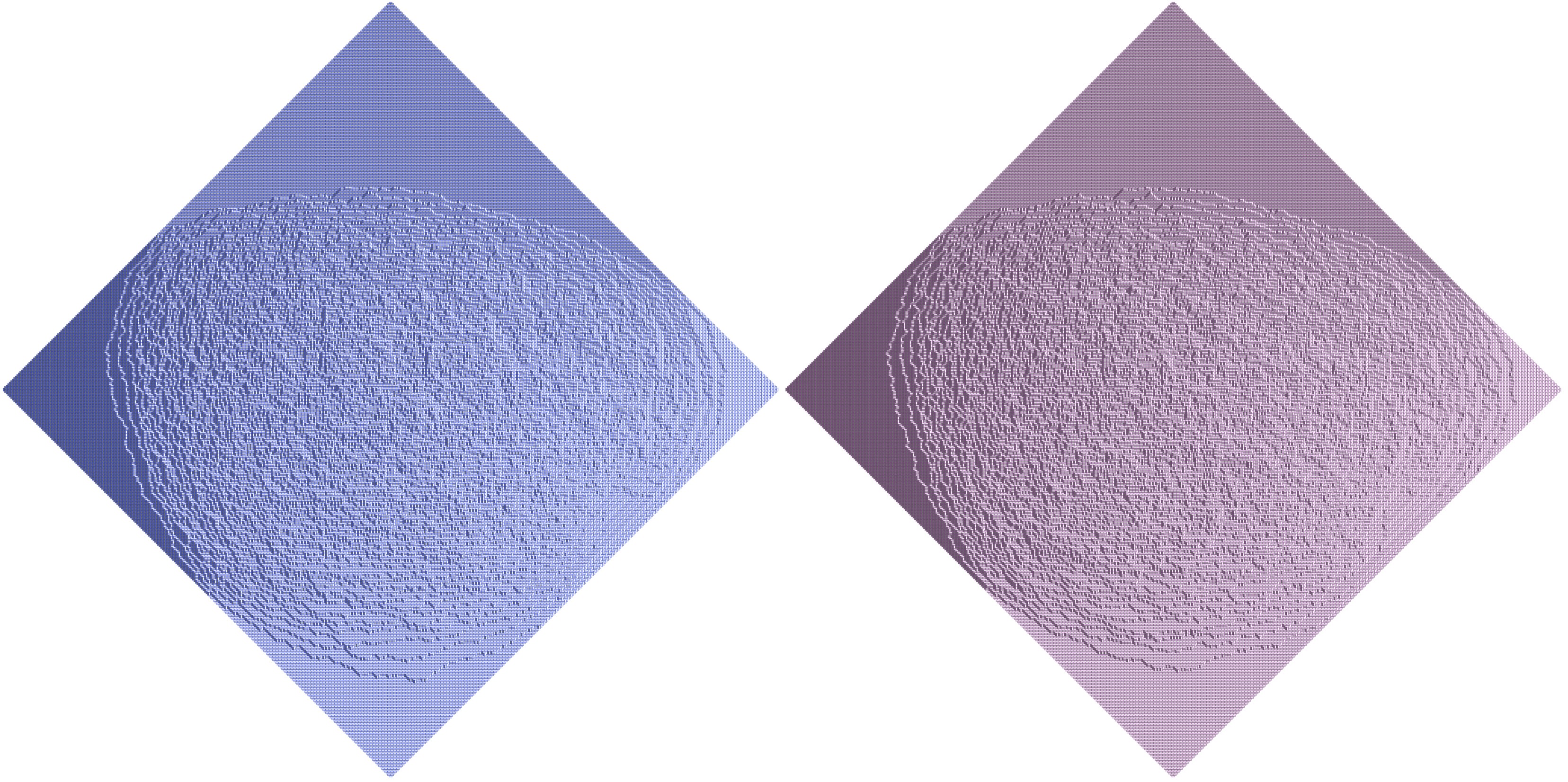}
        
    \caption{A $2$-tiling of rank $N=256$ generated by the shuffling algorithm with $t=0.2$ and $c_i=b_i=2$ for all $i=1,\ldots,N$.}
    \label{fig:t0.2b2}
\end{figure}

\begin{figure}[!htb]
    \centering
    \includegraphics[width=0.8\textwidth]{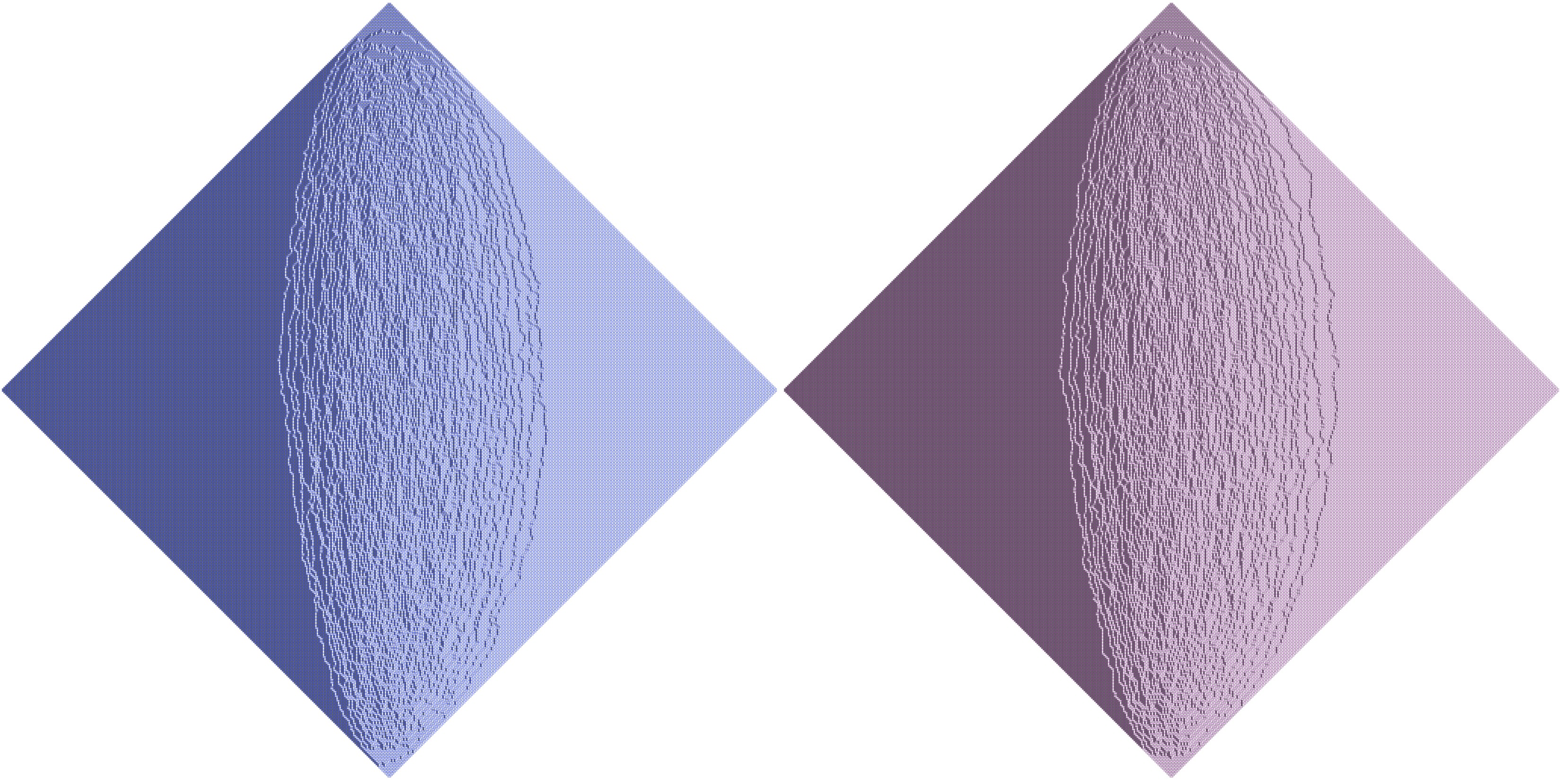}
        
    \caption{A $2$-tiling of rank $N=256$ generated by the shuffling algorithm with $t=0.2$ and $c_i=b_i=0.5$ for all $i=1,\ldots,N$.}
    \label{fig:t0.2b0.5}
\end{figure}

\begin{figure}[!htb]
    \centering
    \includegraphics[width=0.8\textwidth]{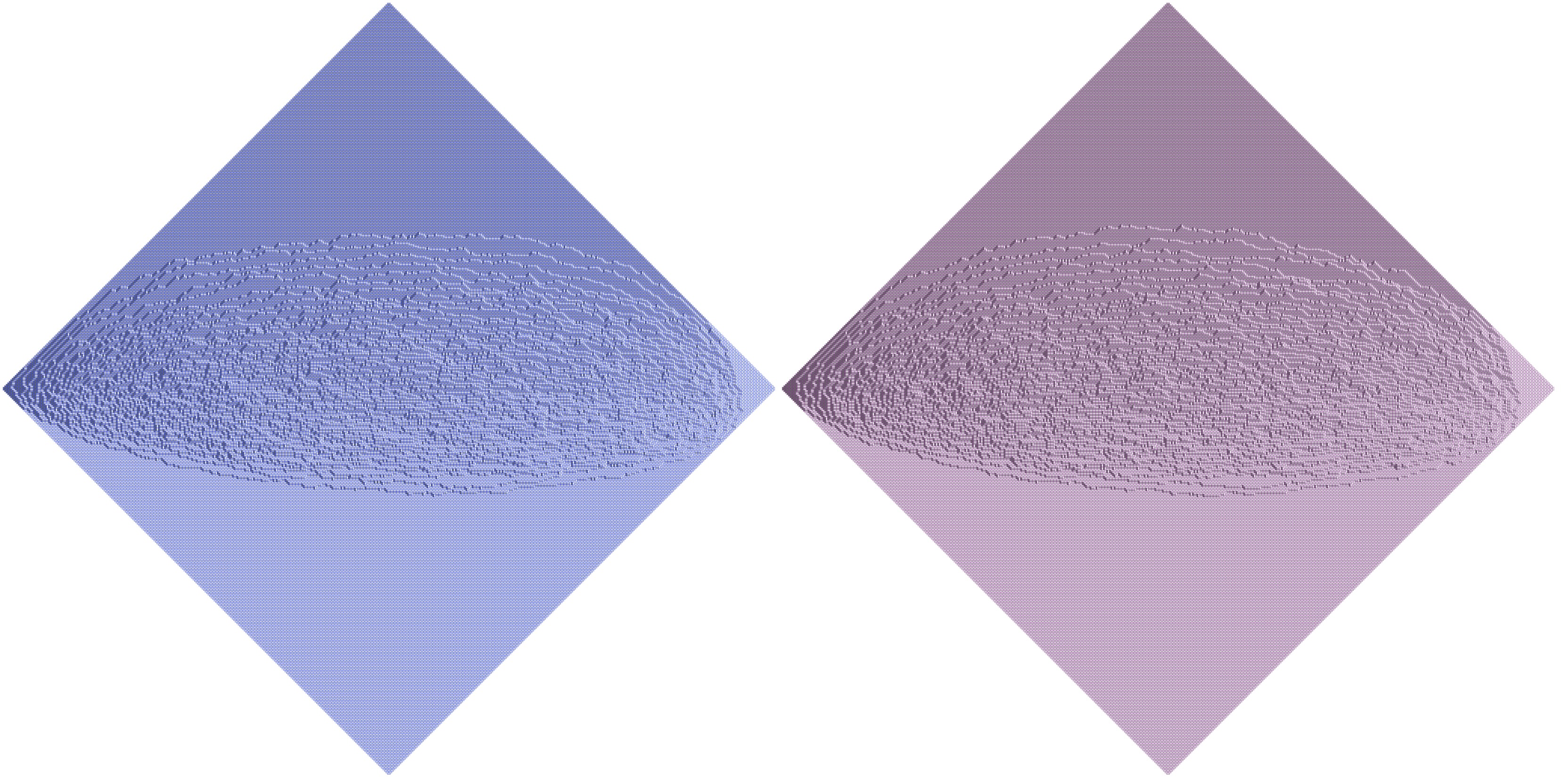}
        
    \caption{A $2$-tiling of rank $N=256$ generated by the shuffling algorithm with $t=5$ and $c_i=b_i=2$ for all $i=1,\ldots,N$.}
    \label{fig:t5b2}
\end{figure}

\begin{figure}[!htb]
    \centering
    \includegraphics[width=0.8\textwidth]{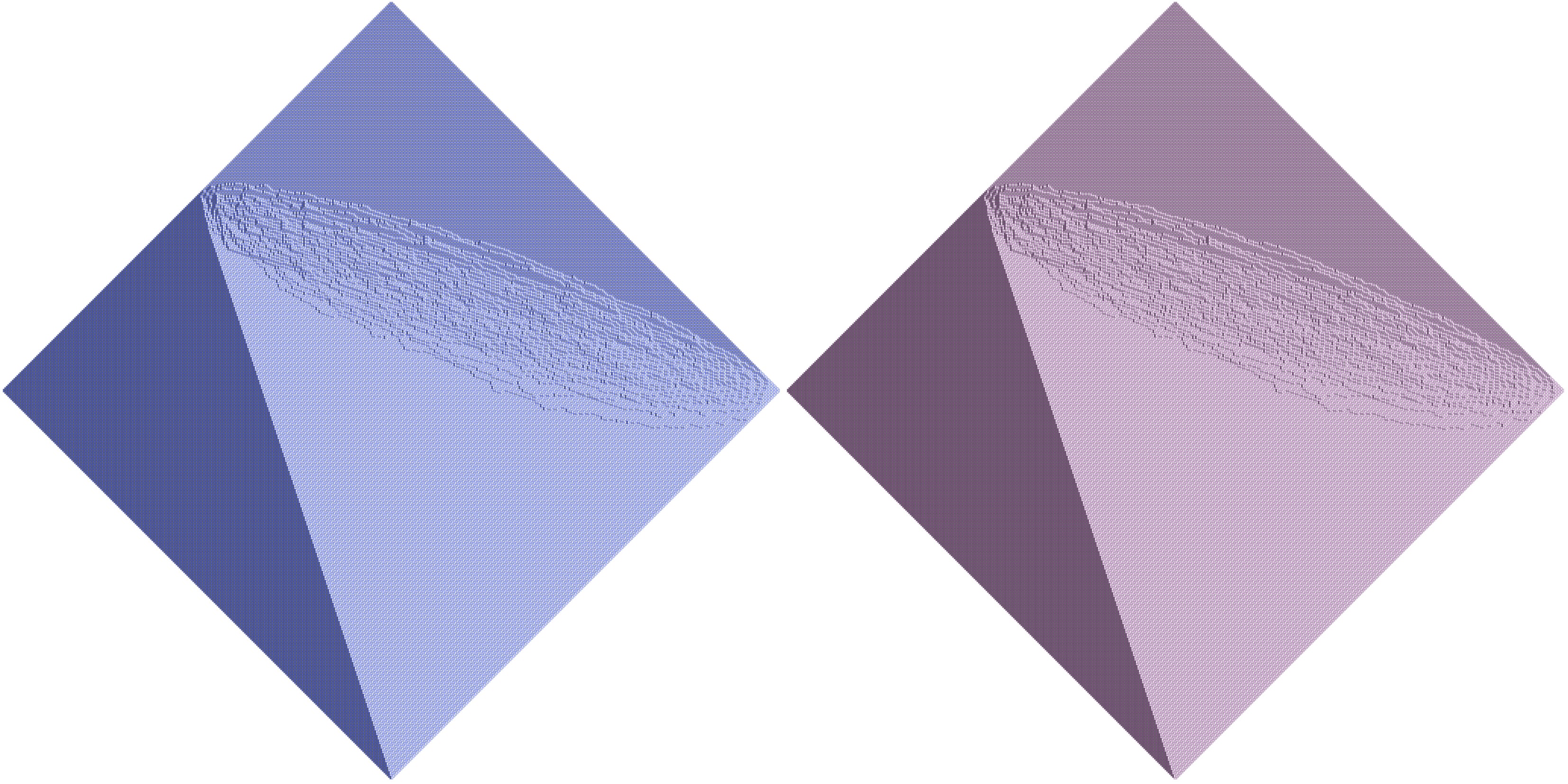}
        
    \caption{A $2$-tiling of rank $N=256$ generated by the shuffling algorithm with $t=0$ and $c_i=b_i=5$ for all $i=1,\ldots,N$.}
    \label{fig:t0b5}
\end{figure}

\begin{figure}[!htb]
    \centering
    \includegraphics[width=0.8\textwidth]{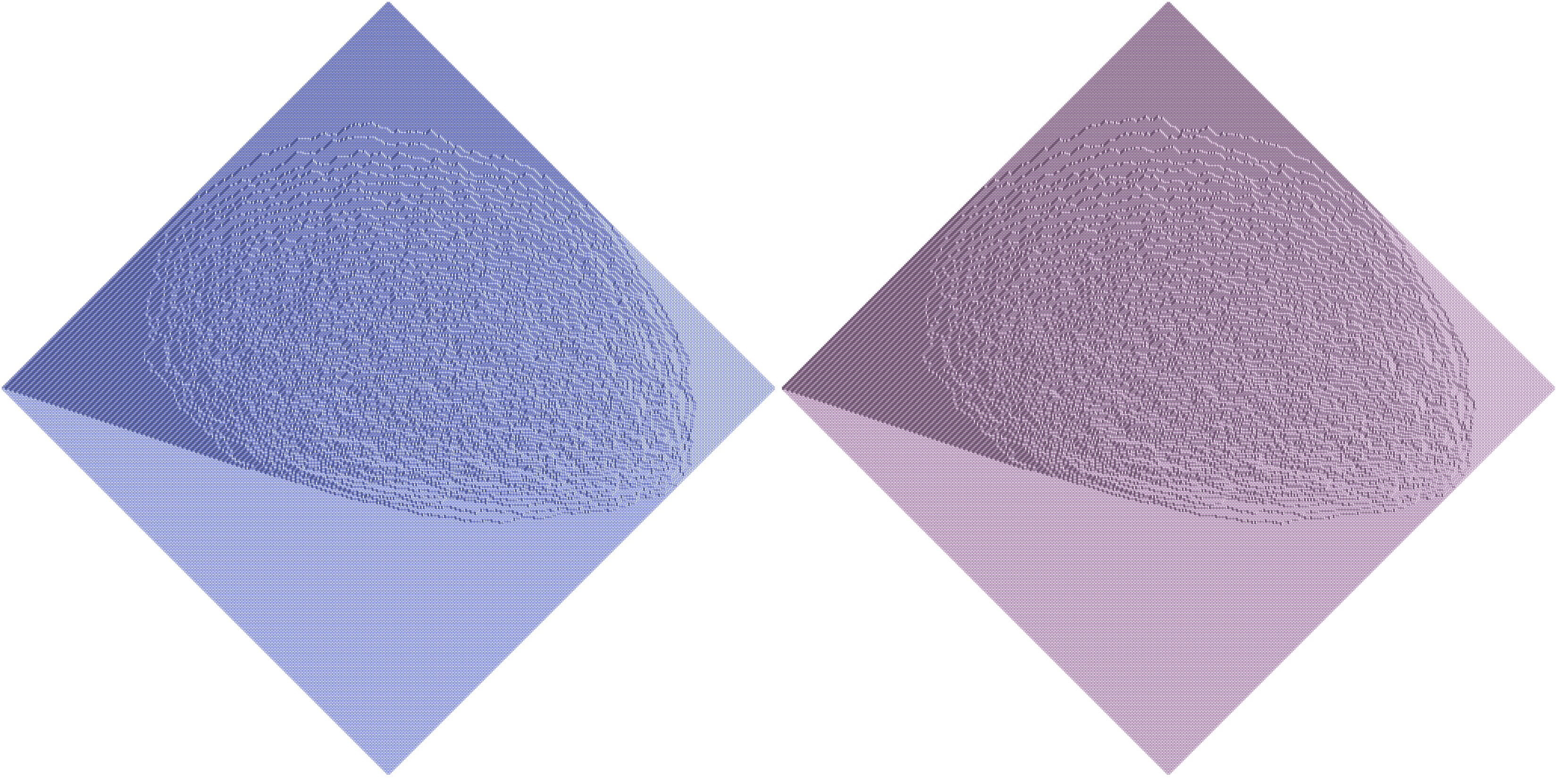}
        
    \caption{A $2$-tiling of rank $N=256$ generated by the shuffling algorithm with $t=10000$ and $c_i=b_i=1$ for all $i=1,\ldots,N$.}
    \label{fig:t10000b1}
\end{figure}

\begin{figure}[!htb]
    \centering
    \includegraphics[width=\textwidth]{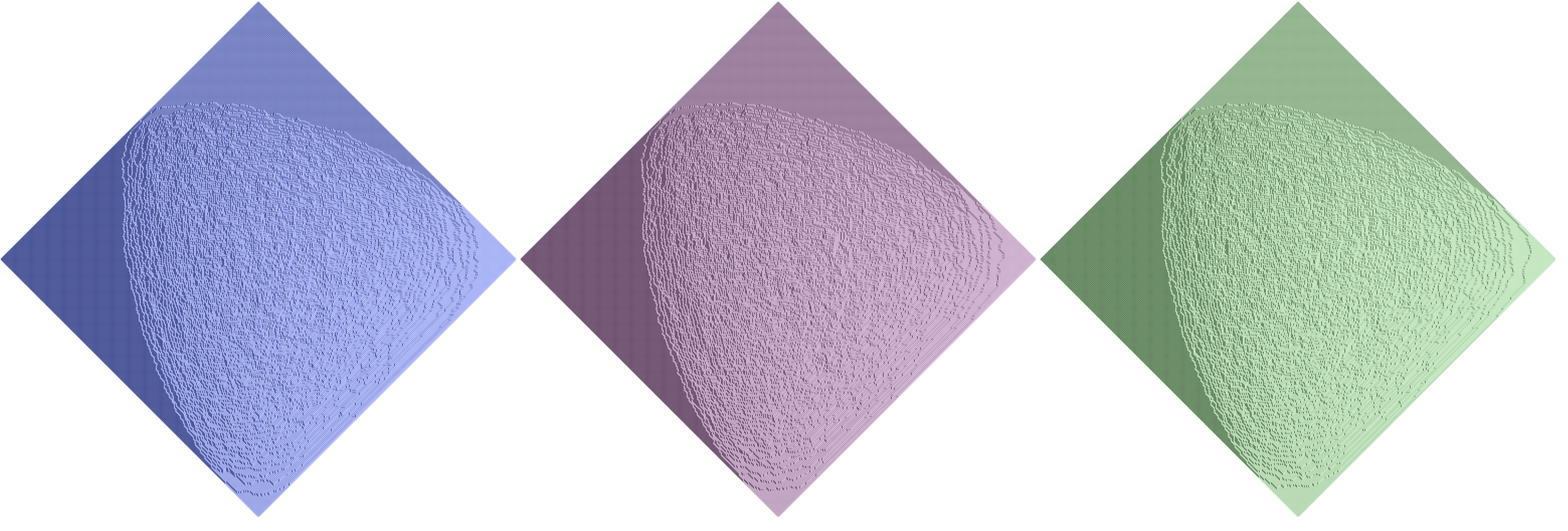}
        
    \caption{A $3$-tiling of rank $N=256$ generated by the shuffling algorithm with $t=0.2$ and $c_i=b_i=2$ for all $i=1,\ldots,N$.}
    \label{fig:t0.2b2c3}
\end{figure}

\end{document}